\tikzset{
  human/.style   = {->, very thick, color=black, shorten >=1pt},
  auto/.style    = {->, thin, dashed, color=black!60, shorten >=1pt},
  semi/.style    = {->, thick, dash pattern=on 4pt off 2pt on 1pt off 2pt, color=black!80, shorten >=1pt}
}
\definecolor{kwcolor}{rgb}{0.8, 0.1, 0.1}    
\definecolor{taccolor}{rgb}{0.1, 0.1, 0.8}    
\definecolor{opcolor}{rgb}{0.5, 0.1, 0.5}    
\lstdefinelanguage{lean} {
  basicstyle = {\small \ttfamily},
  columns = fullflexible,
  keepspaces = true,
  morecomment = [s][\color{green!50!black}]{@[}{]},
  morekeywords = [1]{theorem, by, fun, let, have, obtain},
  keywordstyle = [1]{\ttfamily\color{kwcolor}},
  morekeywords = [2]{use, constructor, intro, only, simp, decide, decideFin, by_contra, subsumption},
  keywordstyle = [2]{\ttfamily\color{taccolor}},
  literate =
  {:=}{{{\color{kwcolor}\raise1pt\hbox{:}\hspace{-1pt}=}}}{2}
  {=>}{{{\color{kwcolor}=>}}}{2}
  {∃}{{{\color{opcolor}\ensuremath{\exists}}}}{1}
  {∧}{{{\color{opcolor}\ensuremath{\wedge}}}}{1}
  {¬}{{{\color{opcolor}\ensuremath{\lnot}}}}{1}
  {×}{{{\color{opcolor}\ensuremath{\times}}}}{1}
  {◇}{{{\color{opcolor}\ensuremath{\diamond}}}}{1}
  {·}{{{\color{taccolor}\ensuremath{\bullet}}}}{1}
  {≠}{{{\color{opcolor}\ensuremath{\neq}}}}{1}
  {ℕ}{{\ensuremath{\mathbb{N}}}}{1}
  {⟨}{{\ensuremath{\langle}}}{1}
  {⟩}{{\ensuremath{\rangle}}}{1}
  {«}{\color{gray}{\guillemetleft}}{1}
  {»}{{\color{gray}\guillemetright}\color{black}}{1}
  ,
}
\newtheorem{theorem}{Theorem}[section]
\renewcommand{\theHtheorem}{\theHsection.\the\value{theorem}}
\newtheorem{lemma}[theorem]{Lemma}
\newtheorem{proposition}[theorem]{Proposition}
\newtheorem{corollary}[theorem]{Corollary}
\newtheorem{problem}[theorem]{Problem}
\theoremstyle{definition}
\newtheorem{definition}[theorem]{Definition}
\newtheorem{example}[theorem]{Example}
\newtheorem{remark}[theorem]{Remark}
\newcommand{\N}{\mathbb{N}}
\newcommand{\Z}{\mathbb{Z}}
\newcommand{\F}{\mathbb{F}}
\newcommand{\x}{\mathrm{x}}
\newcommand{\y}{\mathrm{y}}
\newcommand{\z}{\mathrm{z}}
\newcommand{\w}{\mathrm{w}}
\newcommand{\uu}{\mathrm{u}}
\newcommand{\vv}{\mathrm{v}}
\newcommand{\op}{\diamond}
\newcommand{\formaleq}{\simeq}
\newcommand{\eps}{\varepsilon}
\newcommand{\nmodels}{\not\models}
\newcommand{\modelsfin}{\models_{\mathrm{fin}}}
\newcommand{\nmodelsfin}{\nmodels_{\mathrm{fin}}}
\newcommand{\Magma}{{\mathcal{M}}}
\newcommand{\MagmaN}{{\mathcal{N}}}
\newcommand{\Eq}[1]{\mathrm{E}#1}
\newcommand{\E}{\mathrm{E}}
\title[Equational Theories Project]{The Equational Theories Project: Advancing Collaborative Mathematical Research at Scale}
\author[Equational Theories Project contributors]{Matthew Bolan, Joachim Breitner, Jose Brox, Nicholas Carlini, Mario Carneiro, Floris van Doorn,
  Martin Dvorak, Andr\'es Goens, Aaron Hill, Harald Husum, Hern\'an Ibarra Mejia, Zoltan A. Kocsis, Bruno Le Floch, Amir Livne Bar-on, Lorenzo Luccioli, Douglas McNeil,
  Alex Meiburg, Pietro Monticone, Pace P. Nielsen, Emmanuel Osalotioman Osazuwa, Giovanni Paolini, Marco Petracci, Bernhard Reinke, David Renshaw, Marcus Rossel, Cody Roux,
  J\'er\'emy Scanvic, Shreyas Srinivas, Anand Rao Tadipatri, Terence Tao, Vlad Tsyrklevich, Fernando Vaquerizo-Villar,
  Daniel Weber, Fan Zheng}
\date{December 2025}
\begin{document}

\begin{abstract}
  We report on the \emph{Equational Theories Project} (ETP), an online collaborative pilot project
  to explore new ways to collaborate in mathematics with machine assistance. The project successfully determined all $\num{22028942}$ edges of the implication graph between the $4694$ simplest equational laws on magmas, by a combination of
  human-generated and automated proofs, all validated by the formal proof assistant language
  \emph{Lean}. As a result of this project, several new constructions of magmas satisfying specific laws were discovered, and several auxiliary questions were also addressed, such as the effect of restricting attention to finite magmas.
\end{abstract}

\begingroup
\def\uppercasenonmath#1{} 
\let\MakeUppercase\relax 
\maketitle
\endgroup

{
\setlength{\parskip}{0em}
\setcounter{tocdepth}{1}
\tableofcontents
}

\section{Introduction}

The purpose of this paper is to report on the \emph{Equational Theories Project} (ETP)\footnote{\url{https://teorth.github.io/equational_theories/}}, a pilot project launched~\cite{Tao_blog_Sep_2024} in September 2024 to explore new ways to collaboratively work on mathematical research projects using machine assistance. The project goal, in the area of universal algebra, was selected\footnote{The specific mathematical goal was inspired by \href{https://mathoverflow.net/questions/450930}{the MathOverflow question ``Is there an identity between the associative identity and the constant identity?''}, posed on 17 July 2023.} to be particularly amenable to crowdsourced and computer-assisted techniques, while still being of mathematical research interest.

The project achieved its primary goal on 14 April 2025, when the $\num{4694} \times (\num{4694}-1) = \num{22028942}$ implications between the test set of $\num{4694}$ equational laws were completely determined, with proofs or refutations formalized in \emph{Lean}.  This required coordinating the efforts of a large number of participants contributing both human-written formalizations and automatically generated proofs from various computer tools.  In this paper, we report on both the scientific outcomes of the project, as well as the organizational issues that came up with organizing a mathematical project of this scale.

\subsection{Magmas and equational laws}

In order to describe the mathematical goals of the ETP, we need some notation. A \emph{magma} $\Magma = (M,\op)$ is a set $M$ (known as the \emph{carrier}) together with a binary operation $\op \colon M \times M \to M$. An \emph{equational law} for a magma, or \emph{law} for short, is an identity involving $\op$ and some formal indeterminates, which we will typically denote using the Roman letters $\x,\y,\z,\w,\uu,\vv$, as well as the formal equality symbol $\formaleq$ in place of the equality symbol $=$ to emphasize the formal nature of the law.  If $M$ is finite, we refer to its cardinality as the \emph{size} of the magma $\Magma$.

An \emph{equational theory} is a collection of equational laws; in this paper we will primarily be concerned with theories generated by a single such law, although it is certainly of interest to explore larger theories as well.  Equational theories are one of the simplest non-trivial examples of a theory in the model-theoretic sense; they also arise in various areas of computer science, such as term rewriting systems \cite{term-rewriting}, automated theorem proving \cite{mccune-survey}, and in the Dolev--Yao model \cite{dolev} of interactive cryptographic protocols.

In the ETP, a unique number was assigned to each equational law, via a numbering system that we describe in \Cref{numbering-app}.  For instance, the \emph{commutative law} $\x \op \y \formaleq \y \op \x$ is assigned to the equation number $\Eq{43}$, while the \emph{associative law} $\x \op (\y \op \z) \formaleq (\x \op \y) \op \z$ is assigned to the equation $\Eq{4512}$.  A list of all equations referred to by number in this paper is also provided in \Cref{numbering-app}.

A magma $\Magma = (M,\op)$ satisfies a law $\E$ if the law $\E$ holds for all possible assignments of the indeterminates to elements of $M$, in which case we write $\Magma \models \E$. Thus, for instance $\Magma \models \Eq{43}$ if one has $x \op y = y \op x$ for all $x,y \in M$.  Note that the formal indeterminate symbols $\x, \y$ in $\Eq{43}$ are now replaced by concrete elements $x,y$ of the carrier $M$.

We say that a law $\E$ \emph{entails} or \emph{implies} another law ${\E'}$ if every magma that satisfies $\E$, also satisfies ${\E'}$: $(\Magma \models \E) \implies (\Magma \models {\E'})$.  We write this relation as ${\E} \models {\E'}$. We say that two laws are \emph{equivalent} if they entail each other. For instance, the constant law $\x \op \y \formaleq \z \op \w$ \eqref{eq46} can easily be seen to be equivalent to the law $\x \op \x \formaleq \y \op \z$ \eqref{eq41}.  It is clear that $\models$ is a pre-order, that is to say a partial order after one quotients by equivalence.

In this entailment pre-ordering, the maximal element is given by the trivial law $\x\formaleq\x$ \eqref{eq1}, and the minimal element is given by the singleton law $\x\formaleq \y$ \eqref{eq2}, thus $\Eq{2} \models {\E} \models \Eq{1}$ for all laws $\E$.

We also define a variant: we say that $\E$ \emph{entails} ${\E'}$ \emph{for finite magmas}, and write ${\E} \modelsfin {\E'}$, if every \emph{finite} magma that satisfies $\E$, also satisfies ${\E'}$.  Clearly, the relation ${\E} \models {\E'}$ implies ${\E} \modelsfin {\E'}$; but, as observed by Austin \cite{austin_finite}, the converse is not true in general.

The \emph{order} of an equational law is the number of occurrences of the magma operation, and can be viewed as a crude measure of complexity of the law. For instance, the commutative law $\Eq{43}$ has order $2$, while the associative law $\Eq{4512}$ has order $4$. We note some selected laws of small order that have previously appeared in the literature:
\begin{itemize}
\item The \emph{central groupoid law} $\x \formaleq (\y \op \x) \op (\x \op \z)$ \eqref{eq168} is an order-$3$ law introduced by Evans \cite{evans} and studied further by Knuth \cite{knuth} and many further authors, being closely related to central digraphs (also known as unique path property digraphs), and leading in particular to the discovery of the Knuth-Bendix algorithm \cite{knuth-bendix}; see \cite{klt} for a more recent survey.
\item \emph{Tarski's axiom} $\x \formaleq \y \op (\z \op (\x \op (\y \op \z)))$ \eqref{eq543} is an order-$4$ law that was shown by Tarski \cite{Tarski1938} to characterize the operation of subtraction in an abelian group; that is to say, a magma $\Magma = (M,\op)$ satisfies $\Eq{543}$ if and only if there is an abelian group structure on $\Magma$ for which $x \op y = x-y$ for all $x,y \in M$.
\item In a similar vein, it was shown in \cite{mendelsohn-padmanabhan} (see also \cite{meredith-prior}) that the order-$4$ law
$\x \formaleq (\y \op \z) \op (\y \op (\x \op \z))$ \eqref{eq1571} characterizes addition (or subtraction) in an abelian group of exponent $2$; it was shown in \cite{mccune_et_al} that the order-$6$ law $\x \formaleq (\y \op ((\x \op \y) \op \y)) \op (\x \op (\z \op \y))$ \eqref{eq345169} characterizes the Sheffer stroke in a boolean algebra, and it was shown in \cite{higman-neumann} that the order-$8$ law
$\x \formaleq \y \op ((((\y \op \y) \op \x) \op \z) \op (((\y \op \y) \op \y) \op \z))$ \eqref{eq42323216} characterizes division in a (not necessarily abelian) group.
\end{itemize}
Some further examples of laws characterizing well-known algebraic structures are listed in~\cite{mccune-survey}.

The Birkhoff completeness theorem \cite[Th.~3.5.14]{term-rewriting} implies that an implication ${\E} \models {\E'}$ of equational laws holds if and only if the left-hand side of ${\E'}$ can be transformed into the right-hand side by a finite number of substitution rewrites using the law $\E$. However, the problem of determining whether such an implication holds is undecidable in general \cite{mckenzie}. Even when the order is small, some implications\footnote{Another contemporaneous example of this phenomenon was the solution of the Robbins problem \cite{robbins}.} can require lengthy computer-assisted proofs; for instance, it was noted in \cite{Kisielewicz2} that the order-$4$ law $\x \formaleq (\y \op \x) \op ((\x \op \z) \op \z)$ \eqref{eq1689} was equivalent to the singleton law $\x \formaleq \y$ \eqref{eq2}, but all known proofs were found with computer assistance.\footnote{We improved such a proof to make it human-readable, see \href{https://teorth.github.io/equational_theories/blueprint/implications-chapter.html}{the blueprint of the ETP}.}  Furthermore, for the finite magma implication relation ${\E} \modelsfin {\E'}$, no analogue of the Birkhoff completeness theorem is available.

\subsection{The Equational Theories Project}

As noted in \Cref{numbering-app}, there are $\num{4694}$ equational laws of order at most $4$. The primary mathematical goal of the ETP was to completely determine the \emph{implication graph} for these laws, in which there is a directed edge from $\E$ to ${\E'}$ precisely when ${\E} \models {\E'}$. As the project progressed, an additional goal was added to determine the slightly larger \emph{finite implication graph}, in which there is a directed edge from $\E$ to ${\E'}$ precisely when ${\E} \modelsfin {\E'}$.

Such systematic determinations of implication graphs have been seen previously in the literature; for instance, in \cite{phillips-vojtechovsky}, the relations between $60$ identities of Bol--Moufang type were established, and in the blog post \cite[\S 17]{Wolfram_2022}, some initial steps towards generating this graph for the first hundred or so laws on our list were performed. However, to our knowledge, the ETP is the first project to study such implications at the scale of thousands of laws.

The ETP requires the determination of the truth or falsity of $\num{4694}^2 = \num{22033636}$ implications (for both arbitrary magmas and finite magmas), or $\num{4694} \times (\num{4694}-1) = \num{22028942}$ if the reflexive implications ${\E} \models \E$ are removed; while one can use properties such as the transitivity of entailment to reduce the work somewhat, this is clearly a task that requires significant automation. It was also a project highly amenable to crowdsourcing, in which different participants could work on developing different techniques, each of which could be used to fill out a different part of the implication graph. In this respect, the project could be compared with a Polymath project \cite{Gowers2009}, which used online forums such as blogs and wikis to openly collaborate on a mathematical research problem. However, the Polymath model required human moderators to review and integrate the contributions of the participants, which clearly would not scale to the ETP which required the verification of over twenty million mathematical statements. Instead, the ETP was centered around a GitHub repository in which the formal mathematical contributions had to be entered in the proof assistant language \emph{Lean}, where they could be automatically verified. In this respect, the ETP was more similar to the recently concluded Busy Beaver Challenge~\cite{bbchallenge_bb5}, which was a similarly crowdsourced project that computed the fifth Busy Beaver number $BB(5)$ to be $\num{47176870}$ through an analysis of about $180$ million Turing machines, with the halting analysis being verified in a variety of computer languages, with the final formal proof written in the proof assistant language \emph{Coq} \cite{the_coq_development_team_2024_14542673, bbchallenge_bb5}. One of the aims of the ETP was to explore potential workflows for such collaborative, formally verified mathematical research projects that could serve as a model for future projects of this nature.

Secondary aims of the ETP included the possibility of discovering unusually interesting equational laws, or new experimental observations about such laws, that had not previously been noticed in the literature; and to develop benchmarks to assess the performance of automated theorem provers and other AI tools.

\subsection{Outcomes}

The ETP achieved almost all of its primary objectives, with all of the $\num{22033636}$ implications ${\E} \models {\E'}$ and non-implications $\E \not \models {\E'}$ formalized in the proof assistant language \emph{Lean}, and can be found on the ETP GitHub repository.  See \Cref{fig:854}, \Cref{fig:1729} and \Cref{fig:longchain} for some small fragments of the implication graphs produced.
The $\num{4694}$ laws were organized into $\num{1415}$ equivalence classes, with by far the largest class being the class of $\num{1496}$ equations equivalent to the singleton law $\Eq{2}$.

For the finite implication graph ${\E} \modelsfin {\E'}$, we could similarly formalize all but two implications.  Specifically, we were unable to obtain either a human-readable or formalized proof or disproof of the implication $\Eq{677} \modelsfin \Eq{255}$ (or its equivalent dual $\Eq{2910} \modelsfin \Eq{47}$), despite extensive efforts from the participants of the project; we tentatively conjecture this implication to be false (i.e., that there exists a finite magma satisfying $\Eq{677}$ but not $\Eq{255}$), but the refutation appears to be ``immune'' to most of the techniques that we developed for the project.  (We were however able to establish that the corresponding implication $\Eq{677} \models \Eq{255}$ for arbitrary magmas was false, using the greedy construction discussed in \Cref{greedy-sec}.)

\begin{figure}
\centering
\includegraphics[width=0.85\textwidth,trim=5 0 5 0,clip]{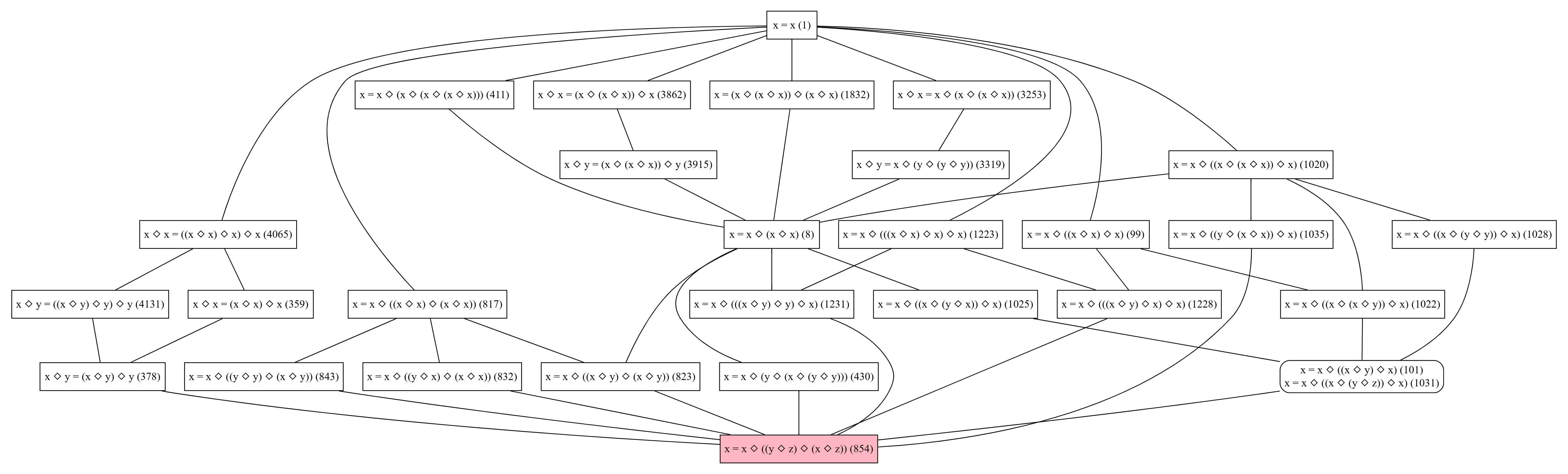}
\caption{A Hasse diagram of all the equational laws implied by $\Eq{854}$ (for unrestricted magmas).  An edge in this diagram indicates that the lower equation implies the higher one. Rounded rectangles indicate groups of equivalent laws.  This graph was produced by the visualization tool \emph{Graphiti}, which was developed for this project.}
\label{fig:854}
\end{figure}

\begin{figure}
    \centering
    \includegraphics[width=0.42\textwidth]{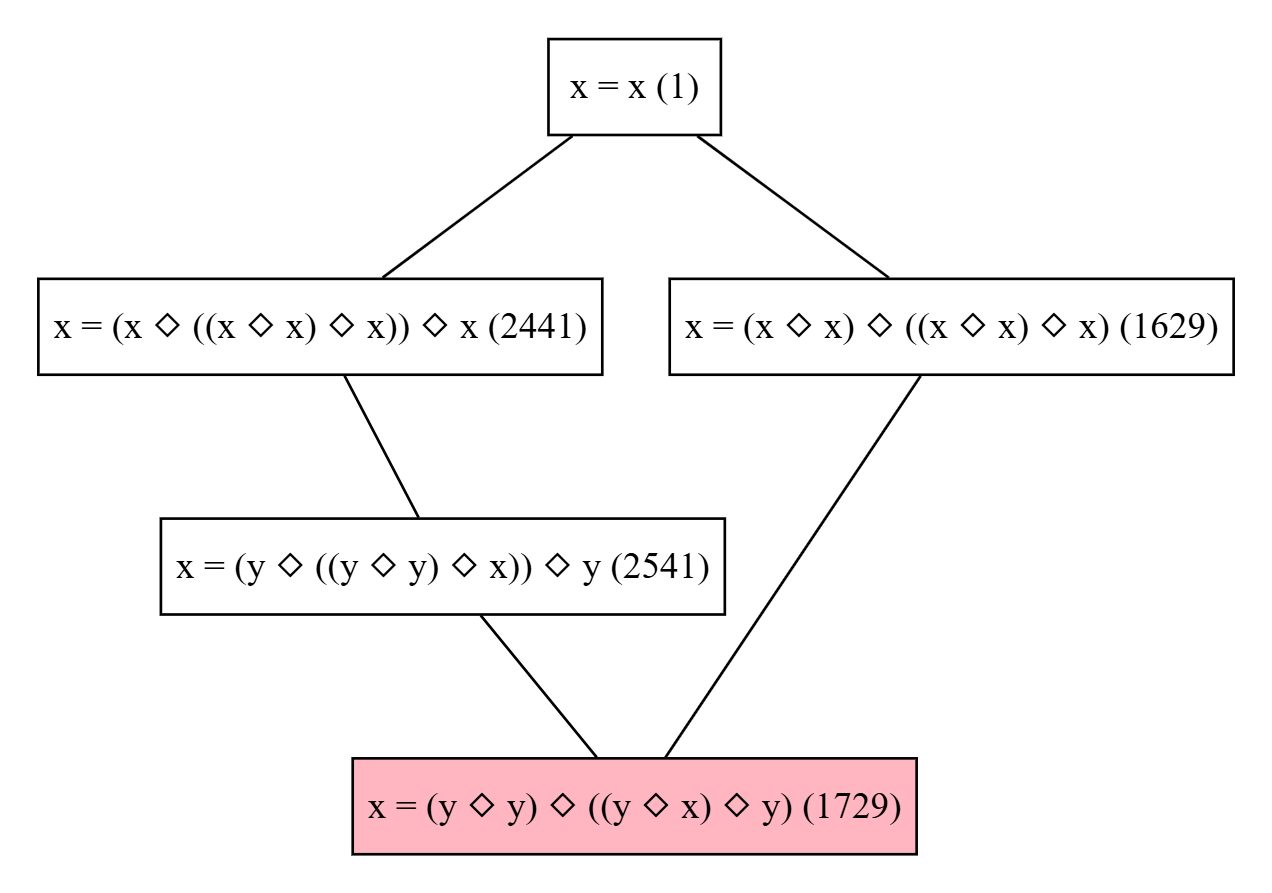}
    \includegraphics[width=0.42\textwidth]{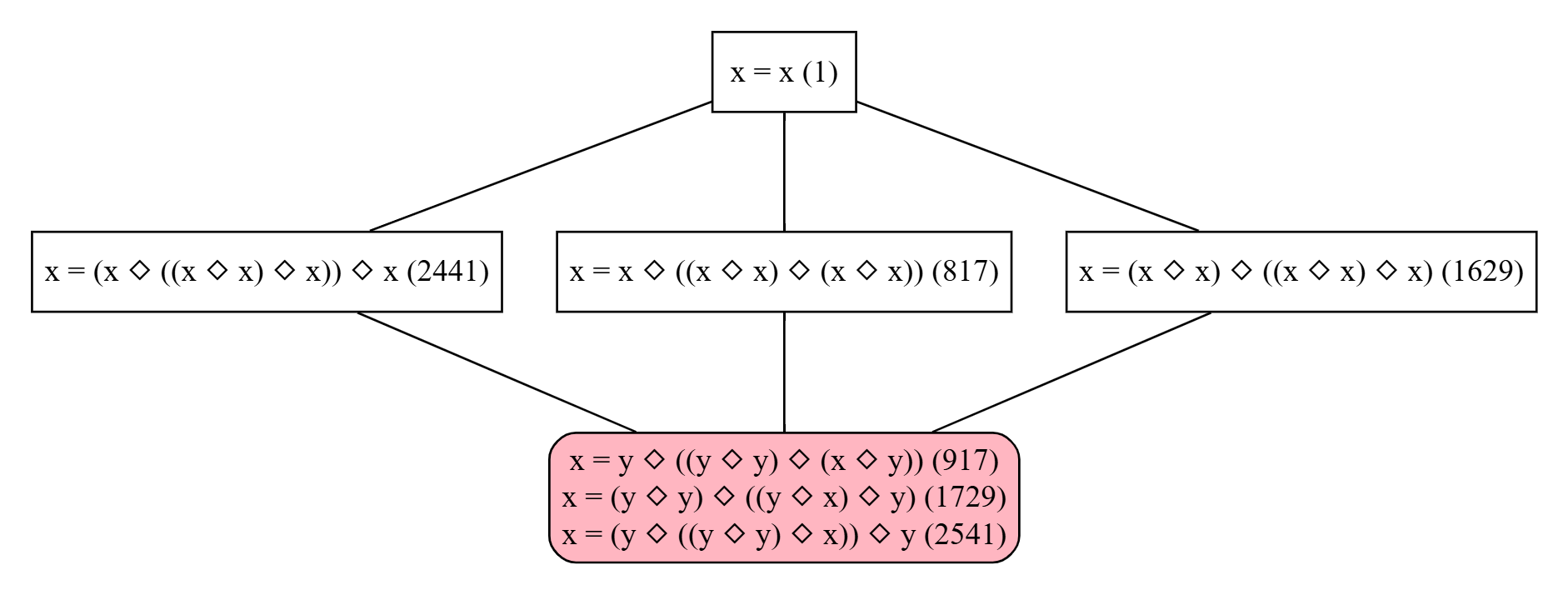}
    \caption{A Hasse diagram of all the equational laws implied by $\Eq{1729}$, both for unrestricted magmas (left) and finite magmas (right). Note the slightly larger number of implications in the latter.}
    \label{fig:1729}
\end{figure}

\begin{figure}
    \centering
    \resizebox{\textwidth}{!}{%
      \begin{tikzpicture}
        \node(1)[draw] at (0,15) {$\x \formaleq \x$ ($\Eq{1}$)};
        \node(3253)[draw] at (4,14) {$\x \op \x \formaleq \x \op (\x \op (\x \op \x))$ ($\Eq{3253}$)};
        \node(3456)[draw] at (-4,14) {$\x \op \x \formaleq \x \op ((\x \op \x) \op \x)$ ($\Eq{3456}$)};
        \node(3319)[draw] at (7,13) {$\x \op \y \formaleq \x \op (\y \op (\y \op \y))$ ($\Eq{3319}$)};
        \node(307)[draw] at (1,13) {$\x \op \x \formaleq \x \op (\x \op \x)$ ($\Eq{307}$)};
        \node(3522)[draw] at (-5,13) {$\x \op \y \formaleq \x \op ((\y \op \y) \op \y)$ ($\Eq{3522}$)};
        \node(326)[draw] at (0,12) {$\x \op \y \formaleq \x \op (\y \op \y)$ ($\Eq{326}$)};
        \node(3715)[draw] at (0,11) {$\x \op \y \formaleq (\x \op \x) \op (\y \op \y)$ ($\Eq{3715}$)};
        \node(3)[draw] at (0,10) {$\x \formaleq \x \op \x$ ($\Eq{3}$)};
        \draw[dashed](3)--(-10,10);
        \draw[dashed](3)--(10,10);
        \node(412)[draw] at (0,9) {$\x \formaleq \x \op (\x \op (\x \op (\x \op \y)))$ ($\Eq{412}$)};
        \node(48)[draw] at (0,8) {$\x \formaleq \x \op (\x \op (\x \op \y))$ ($\Eq{48}$)};
        \node(618)[draw] at (3,7) {$\x \formaleq \x \op (\x \op ((\x \op \y) \op \z))$ ($\Eq{618}$)};
        \node(418)[draw] at (-3,7) {$\x \formaleq \x \op (\x \op (\y \op (\x \op \z)))$ ($\Eq{418}$)};
        \node(9)[draw] at (0,6) {$\x \formaleq \x \op (\x \op \y)$ ($\Eq{9}$)};
        \node(824)[draw] at (-9.7,5) {$\x \formaleq \x \op ((\x \op \y) \op (\x \op \z))$ ($\Eq{824}$)};
        \node(829)[draw] at (-3.1,5) {$\x \formaleq \x \op ((\x \op \y) \op (\z \op \y))$ ($\Eq{829}$)};
        \node(1027)[draw] at (3.1,5) {$\x \formaleq \x \op ((\x \op (\y \op \x)) \op \z)$ ($\Eq{1027}$)};
        \node(1033)[draw] at (9.7,5) {$\x \formaleq \x \op ((\x \op (\y \op \z)) \op \z)$ ($\Eq{1033}$)};
        \node(103)[draw] at (0,4) {$\x \formaleq \x \op ((\x \op \y) \op \z)$ ($\Eq{103}$)};
        \node(1235)[draw] at (-9,4) {$\x \formaleq \x \op (((\x \op \y) \op \z) \op \y)$ ($\Eq{1235}$)};
        \node(1237)[draw] at (0,3) {$\x \formaleq \x \op (((\x \op \y) \op \z) \op \w)$ ($\Eq{1237}$)};
        \node(1037)[draw] at (0,2) {$\x \formaleq \x \op ((\y \op (\x \op \x)) \op \z)$ ($\Eq{1037}$)};
        \node(4)[draw] at (0,1) {$\x \formaleq \x \op \y$ ($\Eq{4}$)};
        \node(2)[draw] at (0,0) {$\x \formaleq \y$ ($\Eq{2}$)};
        \draw(2)--(4);
        \draw(4)--(1037);
        \draw(1037)--(1237);
        \draw(1237)--(1235);
        \draw(1235)--(824);
        \draw(1237)--(103);
        \draw(103)--(1027);
        \draw(103)--(1033.south west);
        \draw(103)--(824.south east);
        \draw(103)--(829);
        \draw(1027)--(9);
        \draw(1033.north west)--(9);
        \draw(829)--(9);
        \draw(824.north east)--(9);
        \draw(9)--(418);
        \draw(9)--(618);
        \draw(418)--(48);
        \draw(618)--(48);
        \draw(48)--(412);
        \draw(412)--(3);
        \draw(3)--(3715);
        \draw(3715)--(326);
        \draw(326)--(3522);
        \draw(3522)--(3456);
        \draw(3456)--(1);
        \draw(326)--(307);
        \draw(307)--(3253);
        \draw(3253)--(1);
        \draw(326)--(3319);
        \draw(3319)--(3253);
      \end{tikzpicture}%
    }
    \caption{Longest chains of implications (length $15$) between inequivalent laws in the implication graph.  The parts above/below law $\Eq{3}$ can be independently dualized. }
    \label{fig:longchain}
\end{figure}

Of the $\num{22033636}$ possible implications ${\E} \models {\E'}$, $\num{8178279}$ (or $37.12\%$) would end up being true; for an additional set of either $820$ or $822$ pairs $\E,{\E'}$, the weaker implication ${\E} \modelsfin {\E'}$ also held. To establish such positive implications ${\E} \models {\E'}$ or ${\E} \modelsfin {\E'}$, the main techniques used were as follows:

\begin{itemize}
    \item A very small number of positive implications were established and \textbf{formalized by hand}, mostly through direct rewriting of the laws; but this approach would not scale to the full project.
    \item \textbf{Simple rewriting rules}, for instance based on the observation that any law of the form $\x \formaleq f(\y,\z,\dots)$ was necessarily equivalent to the singleton law $\Eq{2}$, could already reduce the size of potential equivalence classes by a significant fraction. We discuss this method in \Cref{rewrite-sec}.
    \item The preorder axioms for $\models$, as well as the ``duality'' symmetry of the preorder with respect to replacing a magma operation $x \op y$ with its opposite $x \op^{\mathrm{op}} y \coloneqq y \op x$, can be used to significantly cut down on the number of implications that need to be proven explicitly; ultimately, only $\num{10657}$ ($0.13\%$) of the positive implications needed a direct proof.
    \item To obtain additional implications for finite magmas, heavy reliance was made on the fact that for functions $f \colon M \to M$ on a finite set $M$, surjectivity was equivalent to injectivity.  Some more sophisticated variants of this idea can lead to additional implications; see \Cref{finite-sec}.
    \item \textbf{Automated Theorem Provers} (ATP) could be deployed at extremely fast speeds to establish a complete generating set of positive implications; see \Cref{automated-sec}.
\end{itemize}

More challenging were the $\num{13855357}$ ($62.88\%$) implications that were false, $\E \nmodels {\E'}$, and particularly the slightly smaller set of $\num{13854535}$ or $\num{13854537}$ implications that were false even for finite magmas, $\E \nmodelsfin {\E'}$. Here, the range of techniques needed to refute such implications were quite varied, and may be of independent interest:
\begin{itemize}
        \item \textbf{Small finite magmas}, which can be described explicitly by multiplication tables, could be tested by brute force computations to provide a large number of finite counterexamples to implications, or by ATP-assisted methods. See \Cref{finite-sec}.
        \item \textbf{Linear models}, in which the magma operation took the form $x \op y = ax + by$ for some (commuting or noncommuting) coefficients $a,b$, allowed for another large class of counterexamples to implications, which could be automatically scanned for, either by brute force or by Gr\"obner basis type calculations; many of these examples could also be made finite. See \Cref{linear-sec}.
        \item \textbf{Translation invariant models}, in which the magma operation took the form $x \op y = x + f(y-x)$ on an additive group, or $x \op y = x f(x^{-1} y)$ on a noncommutative group, reduce matters to analyzing certain functional equations; see \Cref{translation-sec}.
        \item To each equation $\E$ one can associate a ``\textbf{twisting semigroup}'' $S_{\E}$.  If $S_{\E}$ is larger than $S_{{\E'}}$, then this can often be used to disprove the implication ${\E} \models {\E'}$; see \Cref{twisting-sec}.
        \item \textbf{Greedy methods}, in which either the multiplication table $(x,y) \mapsto x \op y$ or the function $f$ determining a translation-invariant model are iteratively constructed by a greedy algorithm subject to a well-chosen ruleset, were effective in resolving many implications not easily disposed of by preceding methods. See \Cref{greedy-sec}.
        \item Starting with a simple base magma $\Magma$ satisfying both $\E$ and ${\E'}$, and either \textbf{enlarging} it to a larger magma $\Magma'$ containing $\Magma$ as a submagma, \textbf{extending} it to a magma $\MagmaN$ with a projection homomorphism $\pi: \MagmaN \to \Magma$, or \emph{modifying the multiplication table} on a small number of values, also proved effective when combined with greedy methods or with a ``\textbf{magma cohomology}'' construction. See \Cref{modify-base}.
        \item \textbf{Syntactic methods}, such as observing a ``matching invariant'' of the law $\E$ that was not shared by the law ${\E'}$, could be used to obtain some refutations.  For instance, if both sides of $\E$ had the same order, but both sides of ${\E'}$ did not, this could be used to syntactically refute ${\E} \models {\E'}$.  Similarly, if the law $\E$ was confluent, enjoyed a complete rewriting system, or otherwise permitted some understanding of the free magma associated to that law, one could decide the assertions ${\E} \models {\E'}$ for all possible laws ${\E'}$, or at least a significant fraction of such laws.  We discuss these methods, and the extent to which they can be automated, in \Cref{syntactic-sec}.
        \item Some \textbf{ad hoc models} based on existing mathematical objects, such as infinite trees, rings of polynomials, or ``Kisielewicz models'' utilizing the prime factorization of the natural numbers, could also handle some otherwise difficult cases.  In some cases, the magma law induced some relevant and familiar structures, such as a directed graph or a partial order, which also helped guide counterexample constructions. We will not detail these diverse examples here, but refer the reader to the ETP blueprint for more discussion.
        \item \textbf{Automated theorem provers} were helpful in identifying which simplifying axioms could be added to the magma without jeopardizing the ability to refute the desired implication ${\E} \models {\E'}$ or ${\E} \modelsfin {\E'}$.
\end{itemize}

While the vast majority of negative implications could be quickly resolved by one of the above techniques, either with human input or in a completely automated fashion, there were perhaps two dozen such negative results that required quite delicate and \emph{sui generis} constructions.  The hardest such implication, $\Eq{1729} \nmodels \Eq{817}$, took several months to establish and then formalize (using a combination of many of the above constructions), with the final proof in \emph{Lean} requiring just over $\num{4000}$ dedicated lines of code from multiple contributors.

In the course of completing the implication graph, some interesting new algebraic structures were discovered.  One such example concerns the magmas satisfying $\Eq{1485}$, which we refer to as \emph{weak central groupoids} as they contain the central groupoids (satisfying $\Eq{168}$) as a subclass.  In \cite{knuth} it was observed that all finite central groupoids have order equal to a perfect square $n^2$; empirically, we have found that finite weak central groupoids always have order $n^2$ or $2n^2$, although we have no rigorous proof of this claim; they also have a graph-theoretic interpretation analogous to the interpretation of central groupoids as digraphs with the unique path property.  For these and other observations we refer the reader to \href{https://teorth.github.io/equational_theories/blueprint/weak-central-groupoids-chapter.html}{the blueprint of the ETP}.

The objective of using the data from the ETP to establish well-calibrated benchmarks to evaluate ATPs remains an interesting open problem; the participants of this project did not have the required expertise to develop and test such benchmarks to the standards expected in the area.  However, in \Cref{automated-sec} we present a more informal ``field report'' of our experiences using ATPs in the project, in the hope that this will provide some useful guidance to other researchers seeking to incorporate ATPs into their own research.

\begin{figure}
\centering
\begin{tikzpicture}[
    >=Latex,
    node distance=1.1cm,
    every node/.style={draw, rectangle, align=center},
    every label/.style={draw=none, font=\bfseries\strut}
  ]

  \node (blueprint)   {Blueprint};
  \node (formal)     [below=of blueprint] {Lean formalization};
  \node (viz)        [below=of formal]    {Visualization tools};

  \node (hproofs)    [right=1.4cm of blueprint]       {Human-gen.\ proofs};
  \node (cproofs)    [right=3cm of formal]         {Computer-gen.\ proofs};
  \node (hdisc)      [right=of viz]         {Human discussion};

  \node (atp)        [right=of cproofs]         {ATPs, other\\external tools};

  \node[draw,dotted,inner sep=8pt,fit=(blueprint)(formal)(viz), label=above:GitHub]
    (ghbox) {};
  \node[draw,dotted,inner sep=8pt,fit=(hproofs)(cproofs)(hdisc), label=above:Lean Zulip]
    (zulipbox) {};

  \draw[human,->] (blueprint)  -- (formal);
  \draw[auto,->] (formal)     -- (viz);
  \draw[human,->] (viz)        -- (hdisc);
  \draw[human,->] (hdisc)      -- (atp);
  \draw[auto,->] (atp)        -- (cproofs);
  \draw[human,->] (cproofs)    -- (hproofs);
  \draw[semi,->] (cproofs)    -- (formal);
  \draw[human,<->] (hproofs.south)   -- (hproofs.south|-hdisc.north);
  \draw[human,->] (hproofs)  -- (blueprint);
  \draw[human,->] (hproofs)  -- (formal);

\end{tikzpicture}
\caption{Some of the main dynamics in which proofs were generated, discussed within the Lean Zulip channel and then formalized in the GitHub repository.  Boldface arrows indicate human activities, such as proposing an automated attack on outstanding implications, converting a computer-generated proof into a human-readable format, formalizing a human readable proof directly, or first creating a more precise blueprint for other collaborators to work on.  Dashed arrows indicate fully automated processes, while the partly dashed line indicates a semi-automated process requiring human supervision. }
\label{fig:flow}
\end{figure}

\subsection{Further directions}

While the primary objective of the ETP was being completed, some additional related results were generated as spinoffs.  Specifically:
\begin{itemize}
\item In the blueprint on the ETP web site, we report some partial progress on classifying which of the $\num{57882}$ distinct laws of order $5$ are equivalent to the singleton law $\Eq{2}$, either with or without the requirement that the magma be finite.
\item In \Cref{spectrum-sec} we report on the determination of laws with full spectrum, i.e., with magmas satisfying them of all finite sizes.
\item In \Cref{higman-neumann} we report on classifying the laws of order $8$ that are equivalent to the Higman-Neumann law $\Eq{42323216}$.
\end{itemize}

We refer the reader to the ETP blueprint for a more in depth discussion of several of the topics covered in this report, including a detailed treatment of particularly difficult equations and complex magma constructions, further treatment of term rewriting theory and equational law invariants, a list of ``hard'' implications that required particular effort to resolve in our project, and some implementation details of how tools such as automated theorem provers were utilized.

\section{Notation and mathematical foundations}\label{notation-sec}

If $\Magma = (M,\op)$ is a magma, we define the left and right multiplication operators $L_a, R_a \colon M \to M$ for $a \in M$ by the formula
\begin{equation}\label{left-right}
    L_x y = R_y x \coloneqq x \op y.
\end{equation}
We also define the squaring operator $S \colon M \to M$ by
\begin{equation}\label{square-def}
    Sx \coloneqq x \op x = L_x x = R_x x.
\end{equation}

A \emph{homomorphism} $f \colon \Magma \to \Magma'$ between two magmas $\Magma = (M,\op)$, $\Magma' = (M',\op')$ is a function $f \colon M \to M'$ such that $f(x \op y) = f(x) \op' f(y)$ for all $x,y \in M$.  An \emph{isomorphism} is a homomorphism that is invertible (which implies that the inverse is also a homomorphism).  An \emph{endomorphism} is a homomorphism from a magma to itself.

If $X$ is an alphabet, we let $\Magma_X = (M_X, \diamond)$ denote the free magma generated by $X$, thus an element of $\Magma_X$ is either a letter in $X$, or of the form\footnote{Strictly speaking, one should use parentheses and write $(w_1 \op w_2)$ to avoid ambiguity, but to reduce clutter we shall abuse notation by omitting parentheses when no ambiguity is caused by doing so.} $w_1 \op w_2$ with $w_1,w_2 \in M_X$.  Every function $f \colon X \to M$ into a magma $\Magma = (M,\op)$ extends to a unique homomorphism $\varphi_f \colon \Magma_X \to \Magma$.  Formally, an equational law with some indeterminates in $X$ can be written as $w_1 \formaleq w_2$ for some $w_1, w_2 \in M_X$; a magma $\Magma = (M,\op)$ then satisfies this law if and only if $\varphi_f(w_1) = \varphi_f(w_2)$ for all $f \colon X \to M$.  We also define the order of a word $w \in M_X$ to be the number of occurrences of $\op$ in the word, thus letters in $X$ are of order $0$, and the order of $w_1 \op w_2$ is the sum of the orders of $w_1, w_2$, plus one.

A \emph{theory} is a collection $\Gamma$ of equational laws; we say that a magma $\Magma$ \emph{satisfies} a theory, and write $\Magma \models \Gamma$, if every law in $\Gamma$ is satisfied by $\Magma$.  If $\E$ is an equational law, we write $\Gamma \models \E$ if every magma that satisfies $\Gamma$ also satisfies $\E$.  A \emph{free magma} $\Magma_{X,\Gamma} = (M_{X,\Gamma},\diamond)$ for such a theory $\Gamma$ and an alphabet $X$ is a magma satisfying $\Gamma$ together with a map $\iota_{X,\Gamma} \colon X \to M_{X,\Gamma}$ which is universal in the sense that every function $f \colon X \to \Magma$ to a magma $\Magma$ satisfying $\Gamma$ uniquely determines a homomorphism $\varphi_{f,\Gamma} \colon \Magma_{X,\Gamma} \to \Magma$ such that $\varphi_{f,\Gamma} \circ \iota_{X,\Gamma} = f$.  This magma is unique up to isomorphism; a canonical way to construct it is as the quotient $\Magma_X/\sim_\Gamma$ of the free magma $\Magma_X$ by the equivalence relation $\sim_\Gamma$ given by declaring $w \sim_\Gamma w'$ if $\Gamma \models w \formaleq w'$ \cite[Theorem 3.5.6]{term-rewriting}.  If $\Gamma = \{\E\}$ consists of a single law $\E$, we write $\Magma_{X,\E}$, $\sim_\E$, $\varphi_{f,\E}$ for $\Magma_{X,\{\E\}}$, $\sim_{\{\E\}}$, $\varphi_{f,\{\E\}}$ respectively.

In general, the free magma $\Magma_{X,\Gamma}$ is difficult to describe in a tractable form, but for some theories, one has a simple description.  We give two simple examples here:

\begin{example}[Commutative and associative free magma]\label{semi-group} The free magma $\Magma_{X,\{\Eq{43}, \Eq{4512}\}}$ for the commutative law $\Eq{43}$ and the associative law $\Eq{4512}$ is the free abelian semigroup generated by $X$ (with $\iota_{X,\{\Eq{43},\Eq{4512}\}}$ the obvious embedding map).
\end{example}

\begin{example}[Left-absorptive free magma]\label{left-absorb}
The free magma $\Magma_{X,\{\Eq{4}\}}$ for the left-absorptive law $\Eq{4}$ is the magma with carrier $X$ and operation $x \op y = x$ (with $\iota_{X,\Eq{4}}$ the identity).
\end{example}

Every magma $\Magma$ has an opposite $\Magma^{\mathrm{op}}$, which has the same carrier but the opposite operation $x \op^{\mathrm{op}} y \coloneqq y \op x$.  A magma $\Magma$ satisfies an equational law $\E$ if and only if its opposite $\Magma^{\mathrm{op}}$ satisfies the dual law $\E^*$, defined by reversing all the operations.  For instance, the dual of
$\x \op \y \formaleq \x \op (\y \op \z)$ \eqref{eq327} is $\y \op \x \formaleq (\z \op \y) \op \x$, which in our numbering system we rewrite in normal form as $\x \op \y \formaleq (\z \op \x) \op \y$ \eqref{eq395}.

We then see that the implication graph has a duality symmetry: given two equational laws $\E_1,\E_2$, we have $\E_1 \models \E_2$ if and only if $\E_1^* \models \E_2^*$.

\section{Formal foundations}

All proofs in the ETP were ultimately formalized in the proof assistant language \emph{Lean}, though in many cases the proofs were first written in an informal human document, which was then incorporated into the human-readable \emph{blueprint}~\cite{leanblueprint} that accompanied the formalization.  Many of the computer-assisted proofs were also first generated as computer output from a source other than \emph{Lean}, such as an ATP, and later converted to a \emph{Lean} proof by a separate program custom-written for this task.

The project relied on \emph{Lean}'s extensive \emph{Mathlib} library, for instance to provide support for algebraic concepts such as the free group that arose in some of the more difficult constructions.  Additional extensions to \emph{Lean}, such as \emph{duper} or \emph{egg}, were employed by some participants in external forks of the repository, but we did not incorporate them into the master repository to simplify the version control process.  As a consequence, some manual translation of proofs produced using such extensions to a proof that avoided such extensions were needed at various stages of the project.

The concept of a magma could be modeled by existing \emph{Mathlib} classes such as \texttt{Mul}; however we chose early in the project to define a custom magma class \texttt{Magma} instead, as for some magma constructions the magma operation (which we denoted $\op$) was distinct from an existing multiplication structure $*$ on the same carrier.  Most components of the \emph{Lean} codebase were placed in namespaces to avoid collisions with each other, and with \emph{Mathlib}.

Equational laws in the project were implemented both syntactically\,---\,as a structure \texttt{LawX} containing two words in a free group\,---\,as well as semantically, as a predicate \texttt{EquationX} that could be applied to a magma. Here \texttt{X} is the number assigned to the law. The semantic formulation (\texttt{EquationX}) was more convenient for proving or refuting specific implications, while the syntactic formulation (\texttt{LawX}) was preferred for implementing metatheorems, such as the use of duality between laws. \emph{Lean}'s metaprogramming features proved to be vital to relate the two representations. A custom command, \texttt{equation}, was created for specifying equational laws. Elaborating the \texttt{equation} command generated both \texttt{EquationX} and \texttt{LawX} definitions from this description, as well as theorems relating them to each other. A similar construction was used to generate dual laws, where the dual law was given explicitly for simplicity.

To facilitate the automatic generation of an implication graph from the \emph{Lean} codebase, a custom \texttt{@{[}equational\_result{]}} tag was formed to attach to propositions in \emph{Lean} to indicate that they were proving or refuting one or more implications; see \Cref{fig:impl}.  A \texttt{conjecture} keyword was also created for implications or refutations which we wished to identify as having an informal proof that had yet to be formalized in \emph{Lean}.

\begin{figure}
\centering
\begin{lean}
@[equational_result]
theorem _root_.Equation1437_not_implies_Equation4269 :
    ∃ (G : Type) (_ : Magma G), Equation1437 G ∧ ¬ Equation4269 G := by
  use ℕ × Fin 3, ⟨op⟩
  constructor
  · intro x y z
    simp [op, add_assoc]
  · simp only [not_forall, op]
    use (0, 0), (2, 0)
    decide
\end{lean}
\caption{A sample proof of a formalized implication, in this case that $\Eq{1437} \nmodels \Eq{4269}$.}
\label{fig:impl}
\end{figure}

A single construction of a magma could satisfy multiple laws $\E_1,\E_2,\dots$ and not satisfy others $\E'_1, \E'_2, \dots$, leading to a large number of refutations of the form $\E_i \nmodels \E'_j$.  A custom \texttt{Facts} command was designed to organize such information efficiently; see \Cref{fig:facts}.

\begin{figure}
\centering
\begin{lean}
@[equational_result]
theorem «Facts from All4x4Tables [[1,2,3,4,5,0],[4,1,2,5,0,3],[3,0,5,2,1,4],
[0,5,4,3,2,1],[5,4,1,0,3,2],[2,3,0,1,4,5]]» :
  ∃ (G : Type) (_ : Magma G) (_ : Finite G), Facts G [1316, 2863] [411, 680,
  817, 1020, 1426, 2035, 2441, 2644, 2853, 2855, 2865, 2872, 2947, 3050,
  3253, 3456, 4270, 4283, 4290, 4380, 4598, 4605, 4656] :=
⟨Fin 6, «All4x4Tables [[1,2,3,4,5,0],[4,1,2,5,0,3],[3,0,5,2,1,4],[0,5,4,3,2,1],
[5,4,1,0,3,2],[2,3,0,1,4,5]]», Finite.of_fintype _, by decideFin!⟩
\end{lean}
\caption{A computer generated \texttt{Facts} theorem, using an explicit finite magma of order $6$ to refute several implications at once.}
\label{fig:facts}
\end{figure}

As an additional precaution against ``exploit''-based proofs (such as those that might be contributed by an AI tool) \emph{lean4checker} was used to ensure that no axioms were used in \emph{Lean} outside of a small trusted set.  In particular, \emph{Lean} tactics such as \texttt{native\_decide} that relied on external tools were not permitted into the codebase.

Explicitly formalizing all $\num{22028942}$ implications as theorems would lead to an infeasible compilation time in  \emph{Lean}.  Instead, a reduced generating set of $\num{10657}$ positive implications and $\num{586925}$ negative implications were formalized, with the latter in turn mostly organized into a smaller number of \texttt{Facts} theorems as discussed above.  The extension of these results to the rest of the implication graph via transitivity and duality is currently done by programs external to \emph{Lean}, although in principle one could create an ``end-to-end theorem'' which completely establishes the implication graph within \emph{Lean}.

Some lemmas generated in the project were suitable for upstreaming back to \emph{Mathlib}, as well as several technical improvements to the \emph{LeanBlueprint} software.

\section{Project management}\label{project-sec}

This project is, among other things, an experiment on how to organise large scale collaborations for mathematical work. In this section, we describe several aspects of the organisation of the collaborative effort.

\subsection{Problems of scale in mathematical collaboration}
In order to understand the scaling issues that can arise in large scale collaborations, it helps to revisit the mechanics of traditional mathematical collaborations and their limitations. While every collaboration is unique, there are some general patterns. A small number of contributors, usually under ten, who may know each other, join forces to tackle some class of problems. Typically the collaborators are almost all academics who share substantial amounts of common knowledge. They discuss the problem at hand together, typically with some shared written medium such as a whiteboard. After several rounds of discussion and refinement, different members of the collaboration come up with different pieces of a solution. These pieces are then put together via discussion and merging of write-ups over several iterations. Once the collaborators are reasonably confident about the correctness of their work, including theorem statements and proofs, they submit the paper for peer review. Thus the correctness of mathematical research relies on this basic cycle of discuss, solve, write, cross-check, and revise, followed by peer review. Ultimately the authors take responsibility for the contents of their research article. This joint responsibility for authorship is formally enshrined by mathematical societies. For instance, see point 4 of the EMS code of practice for joint responsibility \cite{EMS_code_of_practice}.

However, this project involved over fifty contributors spread across the world with diverse academic and professional backgrounds. They collaborated across several timezones and countries over the internet. The aforementioned process does not scale. Collaborators do not usually know each other nearly as well as they would in a traditional project. Thus such a collaboration does not have the same level of mutual trust. Further, as the number of contributors grows beyond the single digits, it becomes increasingly difficult to ensure the robustness of each other's results, because of the sheer volume of material produced. Even delegating responsibility for the various pieces of mathematical work and integrating them into a coherent whole becomes difficult. Concretely, the scaling challenge manifests in several ways:
\begin{itemize}
    \item Partitioning and allocating tasks to voluntary contributors, keeping track of progress on the respective subtasks, and ensuring that everybody gets a fair chance at contributing without conflicting submissions for the same subproblems.
    \item Homogenising the mathematical content generated across multiple discussions spanning various forums into a coherent piece of work.
    \item Tracking progress relative to the goals of the project.
    \item Verifying the correctness of contributions made by more than fifty people with diverse backgrounds who might not share a common mathematical vocabulary, and who collaborate across multiple timezones, using a diverse set of tools.
\end{itemize}

Of the challenges mentioned above, this section deals with the first, second, and last. We briefly address the third challenge of tracking progress, the tools for which are described in \Cref{sec:gui-sec}. We spend a lot of time on the last point of trust and verification of results for two reasons. On the one hand, use of tools like \emph{Lean} is fairly new in mathematical research, and while the community researching theorem provers is familiar with their guarantees and limitations, a clear academic exposition targeted at mathematics researchers will be a helpful resource for future reference, to fill a gap that is currently covered by online forums and folklore. We also describe the important role played by a number of other tools in the project.

\subsection{The Blueprint tool}

The formalization of proofs is an act of careful engineering. It is therefore helpful to have a blueprint with detailed natural language lemmata, definitions, and proof sketches in \emph{Lean}. In the \emph{Lean} community it has been conventional to use the \emph{Lean blueprint} tool by Patrick Massot et al.~\cite{leanblueprint}. The typical formalization project has a clearly defined set of target theorems, and the authors of the project work with a known proof, to produce a clear roadmap for the formalization. The \emph{Lean blueprint} tool is capable of linking each piece of this natural language document to its \emph{Lean} encoding, tracking the dependency of definitions and theorems, and progress through them, by producing a key coloured dependency graph. Thus the managers of the formalization project can not only organise the project to distribute tasks among contributors, but also track when various pieces of the formalization are complete.

In this project, we were entering uncharted mathematical territory. We had a clear list of tasks to accomplish, namely to prove the implication or anti-implication between every pair of equational laws, up to transitivity and duality. At the same time there was no clearly known pen and paper proof available for any of these beforehand. This meant that we could not prepare the blueprint of the project in advance and organise the formalization around it. Thus the traditional roles played by the blueprint were replaced by a number of other tools and mechanisms. In particular, the dependency graph did not play its traditional role in formalization projects. We developed a number of visual tools to track our progress in the project in terms of remaining open implications and anti-implications (see \Cref{sec:gui-sec}). Within \emph{Lean}, every equational result was tagged with the \texttt{@[equational\_result]} attribute to identify the theorem as one of the project goals, and this attribute was used to collect the status of all the goal theorems of the project. Instead of covering the dependency graph node by node, progress in the project happened as various contributors uncovered some structural ideas or heuristics that helped ATPs solve one or more pairs of laws.

The blueprint tool played a very important role in recording our progress and formalizing these classes of implications or anti-implications. It is the only comprehensive record of all the techniques that were employed in the project. Further at the level of specific implications and anti-implications, the blueprint and formalization evolved as in other projects, hand in hand. As an example, the formalization of the anti-implication  $\Eq{1729} \nmodels \Eq{817}$ proceeded through several iterations of refinement of the blueprint and formalization.

In conclusion, when using ITPs for tackling open problems, especially at scale, we observed that the role of the blueprint changed, but it still remained an important way to track and document our progress at a local level across the project.

\subsection{The project template}
When working on a formalization project, there are many moving pieces that need to work in concert. At the core level, there is the project set up by \emph{Lean}'s build and dependency management system \emph{lake}. But in addition to that, there are several pieces, including the aforementioned blueprint tool, as well as scripts that a user may choose to run to visualise various aspects of the project, or check the project in specific ways, or compile documentation automatically as the project advances. These additional tasks are accomplished by a number of external tools, and combining them in a mutually compatible way can be challenging. We side-stepped most of these issues by using the GitHub template repository of Pietro Monticone~\cite{Monticone_LeanProject_2025}. At the same time, when we began the project, the template in place was suited for more conventional formalization projects and the tooling they required. It also did not include the scripts that enabled automated project management support that we added, as well as support for deploying our visualisation tools and the paper. Over the course of the project, the \texttt{leanproject} template in turn received substantial new additions. One elementary example is the addition of git pre-push hooks, which are scripts that perform a basic sanity check on the local working copy of a contributor before pushing their contributions to the central GitHub repository.

\subsection{The \emph{Lean} Zulip chat forum}
The \emph{Lean} community traditionally congregates on the \href{https://leanprover.zulipchat.com}{leanprover Zulip chat forum}\footnote{\url{https://leanprover.zulipchat.com}}. Our project was coordinated and organised primarily from this forum. At the beginning we created a channel called \texttt{Equational}. Zulip allows the creation and management of discussion topics within the scope of a channel. We made extensive use of the Zulip channel for several purposes. In the beginning it became the gathering point for new contributors. The new contributions process was designed and discussed on this forum. Later, topics were created for each specific technical topic, including the metatheory and its formalization, specific design decisions, specific implications and anti-implications, design of tools, etc. As shown in \Cref{fig:proj_mgmt_flow}, the Zulip chat served as the beginning of the contributions process for each piece of the project. Contributors first discussed their proposed contributions or specific problems they tackled on Zulip before following the steps of claiming tasks on GitHub, writing a blueprint write up and/or formalization.

\subsection{Organising the collaboration: the precedent set by the PFR project}
When five people collaborate in person, splitting up the research on a question into subtasks and assigning them to collaborators can be accomplished by discussion and consensus. When there are more than fifty collaborators working together online, a more systematic approach is required. In previous formalization projects such as the formalization of the proof of the Polynomial Freiman--Ruzsa (PFR) conjecture \cite{PFR_Tao_Dilles_2023}, tasks were managed over the \emph{Lean} zulipchat forum. The organiser of the project, Terence Tao, posted a series of message threads. Each thread corresponded to a list of outstanding tasks. These tasks were then claimed by collaborators on Zulip. The claims were recorded on a first-come first-served basis by the organiser by tagging the respective users against the tasks. Contributors could claim any open task and disclaim tasks if they couldn't finish it, with the organiser keeping track of these requests. This system allowed contributors to take their time to flesh out their work, without worrying about competing claims to the same task. Further, it helped the organisers track the task assignment and communicate with the respective collaborators to track and ascertain progress. Unfortunately, this involved a lot of manual and time-consuming management of the task list by organisers. In this project, we automated several pieces of this approach. This freed up organisers to help contributors and review their contributions.

\subsection{Organizing the collaboration in this project}
We adopted tools that are familiar to software engineers as ticket systems but are also known in the wider world of industrial production, such as the kanban system. Our project dashboard was built using the GitHub projects feature. We were able to encode some pieces of our automation using the standard GitHub-provided interface. For the rest, we relied on \emph{continuous integration} scripts (hereon~CI\@). The exact flow of contributions is specified in the \texttt{CONTRIBUTING.md} file of the project repository~\cite{The_Equational_Theories_repository}. Briefly,

\begin{enumerate}
    \item Tasks were proposed by organisers. A contributor might start a discussion on Zulip or raise an issue on GitHub to prompt the organisers to launch tasks.
    \item Contributors could then claim tasks with a comment under the task. The CI ensured that at most one contributor could claim a task at any time.
    \item Contributors could then work on the task and propose a corresponding pull request.
    \item Upon completion of the task, the pull request received reviews, while the CI automatically checked that the project compiled and passed additional checks such as \emph{Lean}'s environment replay tool \emph{leanchecker} and the semi-external checker \emph{lean4lean}~\cite{lean4lean}.
    \item If all was well, the PR was merged onto the main branch of the project repository.
\end{enumerate}

At any point in this process, the contributor could disclaim the task or replace a proposed PR with an alternative. In addition, organisers could always step in to fix any errors that occurred and follow up with contributors. Each of the steps described above happened automatically, triggered by a well-defined set of actions described in the \texttt{CONTRIBUTING.md} file. The typical workflow of this process is shown in the flowchart in \Cref{fig:proj_mgmt_flow}. The figure omits error handling and situations where organisers might manually intervene. The user interface to this project management is the GitHub project dashboard, of which we include a snapshot in \Cref{fig:proj_dashboard}

We note that our method has since been adopted by other major formalization projects including the one to formalize Fermat's Last Theorem \cite{FLT_Lean}.
\begin{figure}[t]
    \centering
    \includegraphics[width=.86\textwidth,trim={70 110 240 70},clip]{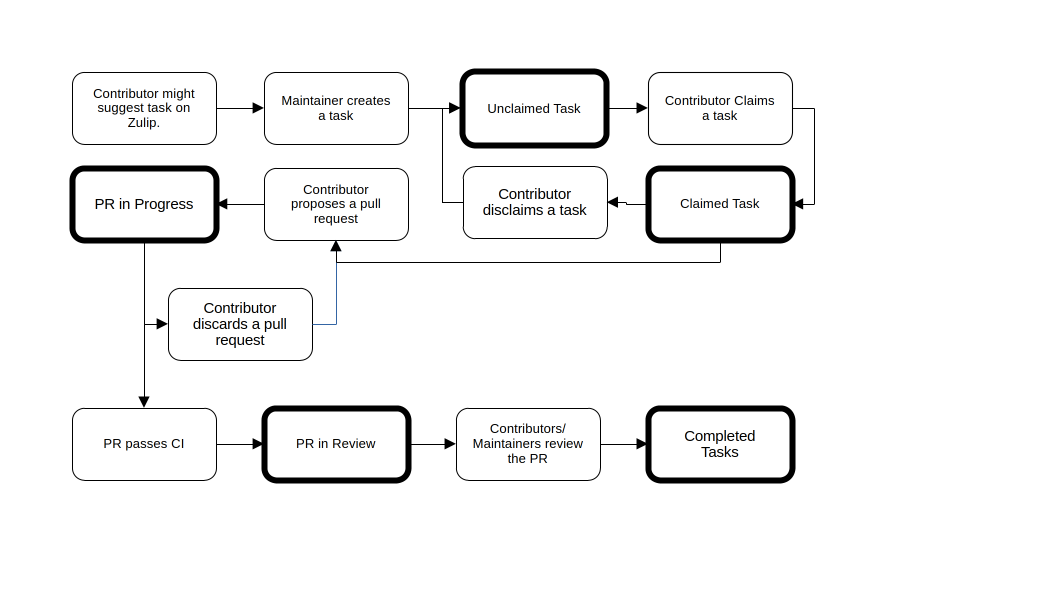}
    \caption{\label{fig:proj_mgmt_flow} A partial flowchart of the automated task management process. Each task corresponds to an issue. A pull request is created to resolve tasks and once a pull request linked to a task is merged, the task is considered complete. The thick boxes represent states of the project dashboard represented by task columns. The movement of tasks between these states is automated by the CI which is triggered upon specific actions performed by contributors on the respective GitHub issues and pull requests. A more detailed description is found in the contributions file of the GitHub file, named \texttt{CONTRIBUTING.md} by convention.}
\end{figure}

\begin{figure}[t]
    \centering
    \includegraphics[width=1.0\textwidth]{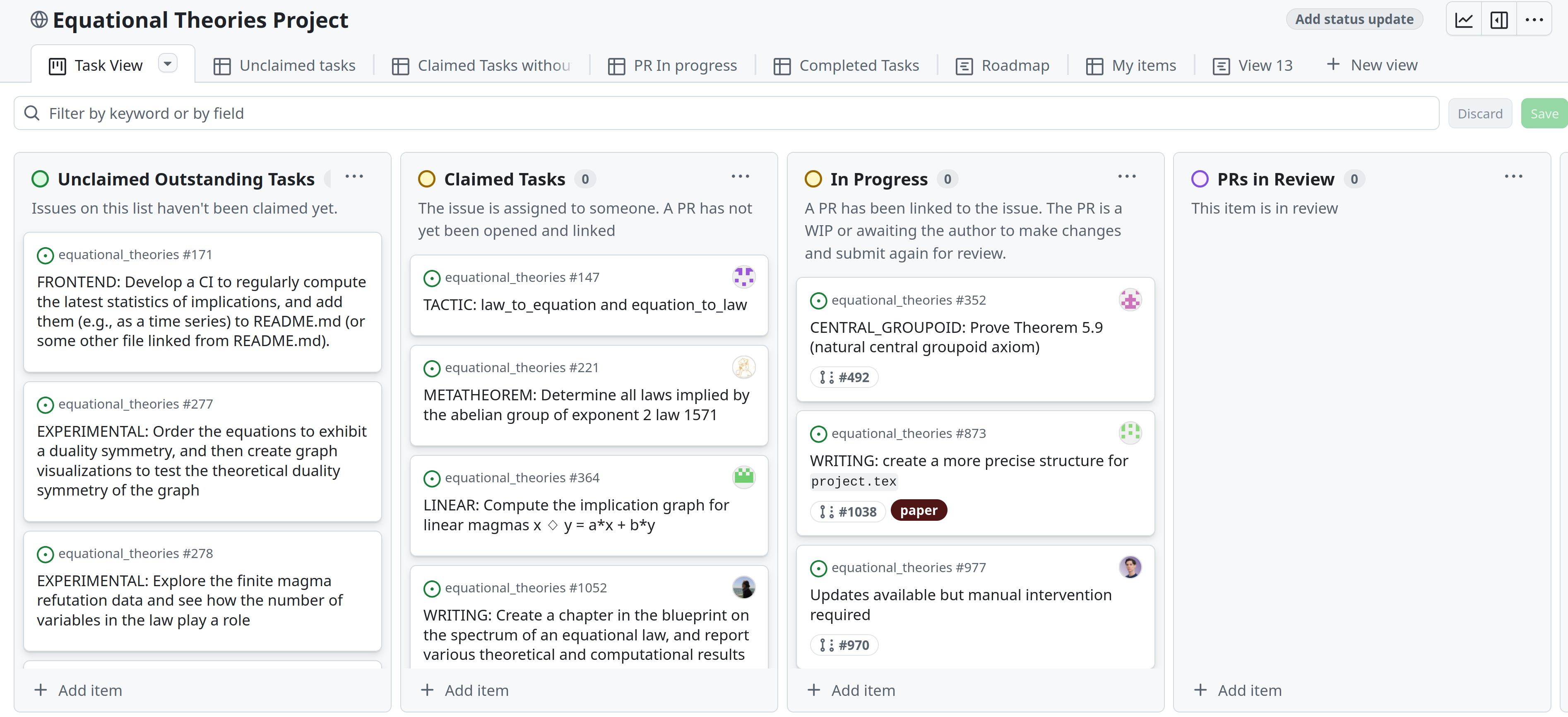}
    \caption{\label{fig:proj_dashboard} A snapshot of the project dashboard as of 30 July 2025}
\end{figure}
\subsection{Trusting ITPs to scale collaboration}
In our project we used the interactive theorem prover (hereon ITP) \emph{Lean} 4 \cite{the_lean4_paper} precisely to address these issues of scaling. The contents of this section are common knowledge in the ITP and ITP-adjacent research communities. The exposition is intended to be useful to a user of ITPs.

At its core, an interactive theorem prover implements an expressive logic, encoded in a suitable choice of mathematical foundations. \emph{Lean} 4 has the calculus of constructions extended by inductive types as its core logic. This logic is sufficiently powerful to express mathematical definitions and theorems for almost all areas of mathematical interest, while being relatively spartan and easy to write proof checkers for. Additionally, modern ITPs provide a convenient programming language which helps express mathematical ideas in a syntax closer to a mathematician's intuition than would be permitted by raw logical terms. A subset of ITPs like \emph{Lean}, \emph{Rocq} (formerly \emph{Coq}), and \emph{Isabelle} go one step further and provide the means to generate proofs through so-called \emph{tactics}. There are usually numerous tactics, each specialised for specific proof generation methods. Among other things, they search mathematical libraries, simplify expressions, and identify lemmas and hypotheses to make progress in proofs. The proofs generated by this overlying programming machinery are terms in the core logical calculus which are checked mechanically by the proof checker. But in a large project, there is more to trust. It helps to understand the nature and limits of trust one can place on ITPs.
\looseness=-1

For our purposes, \emph{Lean} consists of three pieces:
\begin{enumerate}
    \item A core proof checker called the \emph{kernel}. This checker encodes a typed $\lambda$-calculus that is sufficiently expressive for most mathematical purposes. Without getting into details, theorems are encoded as a formal specification of the intended theorem statement. Proofs are encoded as deduction trees using known lemmata, acceptable axioms, and inference rules. The kernel checks whether this deduction tree constitutes a correct deduction in the context of existing theorems and lemmata, called the \emph{environment}. Once a theorem's proof has been checked, it is added to the environment and can be used for constructing subsequent proofs. In reality, \emph{Lean} allows users some flexibility in adding declarations into its environment without checking their types.
    \item A sophisticated programming language in which users express their definitions and theorems. Programs in this language are said to be \emph{elaborated} to produce definitions and proofs in the core logic that the kernel can check.
    \item A compiler which compiles executable \emph{Lean} code into reasonably efficient C code.
\end{enumerate}

Of these three pieces, \emph{the kernel is the smallest and most trusted piece}. Programs in the programming language are translated by an \emph{elaborator} to the spartan language of the kernel. In reality the elaborator does much more, but the key takeaways are the following:
\begin{itemize}
    \item The kernel only verifies programs of a very simple barebones language. A proof verified by \emph{Lean} can be trusted \emph{modulo} the correctness of the kernel implementation. This caveat can usually be dropped because the kernel is also one of the most battle-hardened pieces of an ITP.
    \item The kernel checks proofs against a given specification. This means that if the formal theorem statement itself is flawed or incorrectly uses other definitions, a correct and verified proof of this theorem would be mathematically meaningless. The formal statement of a theorem gives expression to a mathematician's intuition and intention, and as such, the only check against mistakes in this area come from human review. \emph{Lean} cannot offer any guarantees against false statements written by its users or artificial intelligence tools.
    \item The higher level programming language can in-principle, generate arbitrary deductions. Their correctness is ultimately validated by the kernel. This gives the higher level programming language more leeway in generating possibly incorrect deductions.
    \item Ideally the environment only consists of definitions and lemmata already checked by the kernel. Thus their correct usage in the proof of subsequent theorems is valid. However this assumption is often not entirely true in practical implementations of theorem provers for efficiency reasons. In this case, trust in the kernel is restored by replaying each assumption from scratch in the kernel. This can be accomplished by external checkers. Such checkers, which are ideally independent implementations of the \emph{kernel}, can also guard against implementation mistakes in the kernel. In this project we used the environment replay tool \textit{lean4checker} and the \textit{lean4lean} \cite{lean4lean} tool. To our knowledge this was the first such use of the \textit{lean4lean} tool.

\end{itemize}

\begin{remark}
    A bug in the kernel that allows false statements to be proved is usually called a \emph{soundness} bug. Concretely, a soundness bug can be exploited to produce a kernel-certified proof of the proposition \texttt{False}. Such bugs are rare but not entirely non-existent. This is distinct from being able to prove \texttt{False} by simply assuming contradictory or false statements. A proof of the proposition \texttt{False} implies a proof of any statement by \emph{ex falso quod libet}.
\end{remark}

Users of \emph{Lean} only interact directly with the higher level programming language and usually get confirmation of the correctness of their proofs through the editor interface. Further, collaborators on a project such as ours are likely to deploy automated tools such as SAT solvers, SMT solvers, first-order theorem provers, and perhaps even modern AI tools. These tools usually produce proof certificates which are imported or inserted into a \emph{Lean} source file. Some modern AI tools are integrated into code editors which might automatically produce or edit even the statements of theorems and the definitions they deal with. Given the limits to trust mentioned above, a productive and useful collaboration using \emph{Lean} also requires a collaboration and verification infrastructure combining human effort and automated tools. It is in this context that we discuss the project infrastructure. It is a concrete answer to the questions posed by one of the authors at the beginning of this project \cite{Tao_blog_Sep_2024} that combines tools from the ITP community and the software engineering community. All these tools already exist. The goal of this exposition is to explain how they address the concerns described above.

\paragraph{\textbf{The non-Lean pieces:}} While \emph{Lean} can check proofs of theorems up to the limitations described above, a project of this scale involves the use of several non-\emph{Lean} tools. For example there are tools which extract the proven implications and anti-implications. There are tools which construct various visualisations. There are also metaprograms which call external automated theorem provers, and extract proof certificates from them to construct a \emph{Lean} proof out of them. In keeping with the garbage-in garbage-out principle, if these tools get spurious inputs and throw spurious outputs, \emph{Lean} can only tell us that the proofs are incorrect after the formal proofs have been translated to \emph{Lean}. It cannot, for instance, stop us from generating a large number of spurious conjectures that misguide contributors because of a simple index error in an array. Further, the continuous integration scripts that run \emph{Lean} and check the \emph{Lean} code with external checkers are not formally verified. Such bugs can only be uncovered by empirical testing and user reports. This highlights a basic caveat when using interactive theorem provers. These tools check something highly specific, a proof, against a specification. Contributors are responsible for the correctness of everything else. This makes the role of organisers and maintainers especially important.

\subsection{The Role of organisers and maintainers}
As mentioned before, the correctness of the project depends on a lot of moving pieces, many of which cannot be guaranteed to be correct or functional by \emph{Lean}. In this section, we give a brief description of the variety of tasks that need to be performed by maintainers. We wish to emphasize that the role played by maintainers is akin to that played by the principal investigator and senior postdoctoral researchers in a large experimental project, in that they need to understand the big picture and mathematical details of the project to a reasonable extent and be capable of making highly technical decisions, either themselves, on in consultation with subject experts. Further, they are likely to have limited time to get involved in highly specific details of the project, and while they might make technical contributions, most of their time will be spent managing and organising tasks for other contributors. The role requires a  combination of mathematical research skills and capacity with software engineering tools.

The organisers and maintainers have several tasks in a project such as ours.
\begin{itemize}
    \item They are responsible for monitoring the Zulip chat and onboarding new contributors.
    \item They are responsible for creating new tasks based on requirements and Zulip discussions, and ensuring that they are properly assigned.
    \item They are responsible for ensuring that all the project automation functions smoothly and step in when an issue is detected. It greatly helps to have a geographically distributed set of maintainers across timezones to help fix issues at any time of the day.
      \looseness=-1
    \item They are responsible for reviewing all code, both \emph{Lean} proofs and theorems, and non-\emph{Lean} scripts. This includes ensuring that the automation to build and check proofs, compile the documentation and blueprint, and test and run scripts works smoothly. When necessary they must be willing to step in and build or repair the automation.
    \item They are responsible for reviewing and developing the basic definitions and theorems. As mentioned before, \emph{Lean} takes definitions and theorems for granted. Thus maintainers need to be familiar with both the mathematical content and good ways of expressing this content in  \emph{Lean}. Being proficient in  \emph{Lean} internals is helpful for maintainers in identifying anti-patterns like the use of certain tactics that lead to trusting the  \emph{Lean} compiler, such as \texttt{native\_decide}.
    \item Maintainers are responsible for helping contributors who might get stuck in a proof. Other contributors may also assist in such matters, but ultimately it is up to the maintainers to ensure progress.
    \item Experienced maintainers might also offer suggestions and guidance on how to produce shorter or more elegant proofs.
    \item They are responsible for ensuring that some basic standards are met in proof blocks that make proofs robust to upstream changes. For instance, non-terminal uses of the \texttt{simp} tactic must be replaced with the \texttt{simp only} tactic with an explicit list of lemmas used. Otherwise, changes in the behaviour of upstream libraries can change how the tactic works and affect the correctness of the proof, when the  \emph{Lean} toolchain is updated.
    \item They are responsible for maintaining some record of the progress of the project. Projects on this scale can take a long time and it can become hard to remember how the project progressed. It would be extremely tedious to try to reconstruct real-time impressions long after the fact just from the GitHub commit history and Zulip chat archive. Thus it is extremely helpful if maintainers keep and publish regular logs of activities at a frequency that is proportionate to the scale of activity in the project.  In our case, the main organiser maintained a daily log of activities during the busiest part of the project, which became less frequent as the progress rate slowed towards the end.
\end{itemize}

In this project, we were still learning a lot of the aforementioned lessons. A key learning from this project is that it really helps to have a maintainer structure in place before the project begins, rather than inventing one on the fly. Of course, new contributors can be onboarded as maintainers as necessary. But a small maintainer team in the initial stages can hamper proper review processes and result in suboptimal design choices in the  \emph{Lean} formalization that become hard to undo later; for instance, we developed a theory of free magmas before realizing that Mathlib already had this concept, but by that point we did not feel it worth the effort to refactor the existing code to use the Mathlib version. As one example, the initial list of equations were translated and put into one  \emph{Lean} file as opposed to several. This created excessively large  \emph{Lean} files which could have been managed better with a better file organisation.

In conclusion, we wish to emphasize that as projects scale, the administrative aspects of the project assume non-trivial importance. They require people who are proficient in at least part of the research topic, technical tools, and administrative matters. Setting these processes well in advance of announcing the project and inviting contributors should lead to a smoother project.

\section{Counterexample constructions}

In this section we collect the various techniques developed in the ETP to construct counterexamples to implications $\E \models {\E'}$.

\subsection{Finite magmas}\label{finite-sec}

A finite magma $\Magma$ of size $n$ can be assumed without loss of generality to have carrier $\{1,\dots,n\}$ and described by specifying the multiplication table $\op \colon \{1,\dots,n\} \times \{1,\dots,n\} \to \{1,\dots,n\}$.  By generating a list of all the equational laws $\Eq{j}$, $j=1,\dots,4694$ satisfied by this magma, one can create refutations: if $\Magma \models \Eq{j}$ and $\Magma \not \models \Eq{k}$, then clearly $\Eq{j} \nmodelsfin \Eq{k}$ and hence also $\Eq{j} \nmodels \Eq{k}$.  (As mentioned previously, these statements were organized in Lean using the \texttt{Facts} statement.) It is feasible to brute force over all $\sum_{n=2}^4 n^{n^2} \approx 4.3 \times 10^9$ non-trivial magmas of size at most $4$ to obtain many refutations of this type.
By performing brute force over all magmas up to size $4$, a total of $\num{13632566}$ implications ($61.9\%$ of all implications, and $96.3\%$ of the false ones) can be refuted with $524$ distinct magmas. Of these implications, $\num{13345053}$ were refuted with magmas of size\footnote{For an earlier computer exploration of size $3$ magmas, see \cite{berman-burris}.} $3$, with the remaining $\num{415293}$ requiring magmas of size $4$. Performing this search took 165 CPU-hours.

However, it is not feasible to exhaustively search over the $5^{5^2} \approx 3 \times 10^{17}$ magmas of size $5$, even after quotienting out by isomorphism and symmetry (which roughly saves a factor of $5! \times 2 = 240$).  Randomly sampling such magmas did not produce significant refutations, as random magmas of size $5$ tended to satisfy few laws, and the set of laws covered were usually also exhibited by smaller magmas.  A more fruitful approach was to randomly sample from magmas with additional properties that encouraged satisfiability of a greater set of laws.  These included linear and quadratic magmas (discussed below), and cancellative magmas.  On the other hand, some classes of magmas, such as commutative magmas, ended up producing a disappointingly small number of additional refutations.

For specific refutations, it was sometimes possible to locate a finite example with an ATP, particularly if one also imposed additional axioms (e.g., an idempotence axiom $x = x \op x$) that one suspected would be useful; see \Cref{automated-sec} for further discussion.  For medium-sized magmas (of size $n=5,6,7,8$), this appeared to be a more efficient approach than brute force exhaustion of all such magmas.

It is a result of Kisielewicz \cite{Kisielewicz} that every law $\Eq{n}$ with $n \leq 4694$ is either equivalent to the singleton law $\Eq{2}$, or else has a non-trivial finite model; in other words, the implications $\Eq{n} \models \Eq{2}$ and $\Eq{n} \modelsfin \Eq{2}$ are equivalent for $n \leq 4694$.  Our brute force search revealed that in the latter case there is always a model of size $2 \leq n \leq 5$, with the lone exception of $\Eq{1286}$ (and its dual $\Eq{2301}$), for which the smallest non-trivial finite model was of size $7$, as presented in \Cref{1286-ex} below.  In fact, most of the $\num{4694}$ laws either only had trivial models, or had a size $2$ model, as shown in \Cref{size-table}.
\begin{table}
\centering
\caption{Number of laws of order at most $4$ whose smallest non-trivial model (if any) is of a given size.}\label{size-table}
\sisetup{table-format=2, table-alignment-mode=format, table-number-alignment=center}
\begin{tabular}{lS[table-format=4]SSSS[table-format=1]S[table-format=4]}
  \toprule
  Size of smallest non-trivial model & {$2$} & {$3$} & {$4$} & {$5$} & {$7$} & {Trivial only} \\
  \midrule
  Number of laws of order $0$ & 1 & & & & & 1 \\
  \phantom{Number of laws of }order $1$ & 3 & & & & & 2 \\
  \phantom{Number of laws of }order $2$ & 27 & & & & & 12 \\
  \phantom{Number of laws of }order $3$ & 229 & 14 & 2 & & & 119 \\
  \phantom{Number of laws of }order $4$ & 2876 & 18 & 12 & 14 & 2 & 1362 \\
  \cmidrule{1-7}
  \qquad\qquad\, total (order $\leq 4$) & 3136 & 32 & 14 & 14 & 2 & 1496 \\
  \bottomrule
\end{tabular}
\end{table}

\begin{remark} The laws satisfied by a given finite magma $\Magma$ need not be finitely axiomatizable.  The smallest example is the three-element magma $\{0,1,2\}$ with $1 \op 2 = 1$, $2 \op 1 = 2 \op 2 = 2$, and $x \op y = 0$ for all other $x,y$ \cite{murskii-1}.  It was also shown in \cite{murskii-2} that ``almost all'' magmas $\Magma$ (in a certain precise sense) are idemprimal\footnote{A magma is idemprimal if every idempotent function is expressed by a term.  This is weaker than being primal (in which \emph{every} function is expressible by a term), but stronger than being quasiprimal (in which the \emph{discriminator} $f(a,b,c)$, defined to equal $c$ if $a=b$ and $a$ otherwise, is expressed by a term); quasiprimality is sufficient to show that all other finite magmas satisfying the laws of $\Magma$ are isomorphic to products of submagmas of $\Magma$.  By combining these observations with \cite{Padmanabhan}, one can also show that quasiprimal finite magmas can be axiomatized by a single equation.  We thank Stanley Burris for these comments, as well as the other references and observations in this remark.}, which implies that their laws are finitely axiomatizable, and all other finite magmas satisfying these laws are isomorphic to powers of $\Magma$.
\end{remark}

\subsection{Linear models}\label{linear-sec}

As it turns out, a particularly fruitful source of counterexamples is the class of \emph{linear magmas}, where the carrier $M$ is a ring (which may be commutative or noncommutative, finite or infinite), and the operation $\op$ is given by $x \op y = ax + by$ for some coefficients $a,b \in M$; one can also generalize this slightly to \emph{affine magmas}, in which the operation is given by $x \op y = ax + by + c$, but for simplicity we shall focus on linear magmas here.  It is easy to see that in a linear magma, any word $w(x_1,\dots,x_n)$ of $n$ indeterminates also takes the linear form
$$ w(x_1,\dots,x_n) = \sum_{i=1}^n P_{w,i}(a,b) x_i$$
for some (possibly noncommutative) polynomial $P_{w,i}$ in $a,b$ with integer coefficients.  Thus, a linear magma will satisfy an equational law $w_1 \formaleq w_2$ if and only if the pair $(a,b)$ lies in the (possibly noncommutative) variety
\begin{equation}\label{variety}
  V_{w_1,w_2}(M) \coloneqq \{ (a,b) \in M \times M: P_{w_1,i}(a,b) = P_{w_2,i}(a,b) \hbox{ for all } i \} \subseteq M^2.
\end{equation}
As such, a necessary condition for such a law $w_1 \formaleq w_2$ to entail another law $w'_1 \formaleq w'_2$ is that one has the inclusion
$$ V_{w_1,w_2}(M) \subseteq V_{w'_1,w'_2}(M)
$$
for all rings $M$.  For commutative rings, this criterion can be checked in an automatable fashion by standard Gr\"obner basis techniques; in the noncommutative case one can use methods such as the diamond lemma \cite{diamond-lemma}.

\begin{example}[Commutative counterexample]\label{1286-ex} For the law $\x \formaleq \y \op (((\x \op \y) \op \x) \op \y)$ \eqref{eq1286}, the variety \eqref{variety} can be computed to be
$$ \{ (a,b) \in M \times M: 1 = ba^3+bab, 0 = a + ba^2 b + b^2 \}$$
while the variety for the idempotent law $\Eq{3}$ is
$$ \{ (a,b) \in M \times M: a+b=1 \}.$$
Thus, to show that $\Eq{1286}$ does not entail $\Eq{3}$, it suffices to locate elements $a,b$ of a ring $M$ for which one has $1 = ba^3+bab$, $0 = a + ba^2 b + b^2$, and $a+b \neq 1$.  Here one can take a commutative example, for instance when $M = \Z/p\Z$ and $(p,a,b) = (11,1,7)$.
\end{example}

\begin{example}[Noncommutative counterexample]\label{1117-ex} For the law $\x \formaleq \y \op ((\y \op (\x \op \z)) \op \z)$ \eqref{eq1117}, the variety \eqref{variety} can be computed to be
$$ \{ (a,b) \in M \times M: 1 = baba, 0 = a+ba^2, 0 = bab^2 + b^2 \}$$
while the variety for $\x \formaleq (\x \op ((\x \op \x) \op \x)) \op \x$ \eqref{eq2441} is
$$ \{ (a,b) \in M \times M: 1 = a^2 + aba^2 + abab + ab^2 + b \}.$$
Observe that if $ba = -1$, then $(a,b)$ automatically lies in the first set, and lies in the second set if and only if $(ab+1)(b-1) = 0$.  One can then show that $\Eq{1117}$ does not imply $\Eq{2441}$ by setting $a = L$, $b = -R$ where $L, R$ are the left and right shift operators respectively on the ring of integer-valued sequences $\Z^\N$.  With some \emph{ad hoc} effort one can convert this example into a less linear, but simpler (and easier to formalize) example, namely the magma with carrier $\Z$ and operation $x \op y = 2x - \lfloor y/2 \rfloor$.
\end{example}

\begin{remark} As essentially observed in \cite{austin}, if there is a commutative linear counterexample to an implication $\E \models {\E'}$, then by the Lefschetz principle this counterexample can be realized in a finite field ${\mathbb F}_q$ for some prime power $q$ (and by the Chebotarev density theorem one can in fact take $q$ to be a prime, so that the carrier is of the form $\Z/p\Z$ for some prime $p$), so that one also has $\E \modelsfin {\E'}$.  As such, we have found that an effective way to refute implications by the commutative linear magma method is to simply perform a brute force search over linear magmas $x \op y = ax + by$ in $\Z/p\Z$ for various triples $(p,a,b)$.

On the other hand, the refutations obtained by noncommutative linear constructions need not have a finite model.  For instance, consider the refutation $\Eq{1117} \nmodels \Eq{2441}$ from \Cref{1117-ex}.  The law $\Eq{1117}$ can be rewritten as $L_y R_z L_y R_z x = x$.  This implies that $R_z$ is injective and $L_y$ is surjective for all $y,z$.  For finite magmas $\Magma$, this then implies that the $L_y, R_z$ are in fact invertible, and hence we have also $R_z L_y R_z L_y x = x$, which implies $\Eq{2441}$ by setting $x=y=z$.  Thus, the refutation $\Eq{1117} \nmodels \Eq{2441}$ is ``immune'' to finite counterexamples.
\end{remark}

\begin{remark}  One can also consider nonlinear magma models, such as quadratic models $x \op y = ax^2 + bxy + cy^2 + dx + ey + f$ in a cyclic group $\Z/N\Z$.  For small values of $N$, we have found such models somewhat useful in providing additional refutations of implications $\E \modelsfin {\E'}$ beyond what can be achieved by the linear or affine models.  However, as the polynomials associated to a word $w(x_1,\dots,x_n)$ tend to be of high degree (exponential in the order of the word), it becomes quite rare for such models to satisfy a given equation $\E$ when $N$ is large.
\end{remark}

\begin{remark} One can also consider the seemingly more general linear model $x \op y = ax + by$, where the carrier $M$ is now an abelian group, and $a,b$ act on $M$ by homomorphisms, that is to say that they are elements of the endomorphism ring $\mathrm{End}(M)$.  However, this leads to exactly the same varieties \eqref{variety} (where $M$ is now replaced by the endomorphism ring $\mathrm{End}(M)$) and so does not increase the power of the linear model for the purposes of refuting implications.
\end{remark}

On the other hand, there are certainly some refutations $\E \nmodels {\E'}$ of implications that are ``immune'' to both commutative and noncommutative linear models, in the sense that all such models that satisfy $\E$, also satisfy ${\E'}$.  One such example is the refutation $\Eq{1485} \not \models \Eq{151}$, which we discuss further in \Cref{twisting-sec} below.

\subsection{Translation-invariant models}\label{translation-sec}

It is natural to look for counterexamples amongst magmas that satisfy a large number of symmetries.  One such class of counterexamples are \emph{translation-invariant models}, in which the carrier $M$ is a group, and the left translations of this group form isomorphisms of the magma $\Magma$.  In the case of an abelian group $M = (M,+)$, such models take the form
\begin{equation}\label{xop-add}
  x \op y = x + f(y-x)
\end{equation}
for some function $f \colon M \to M$; in the case of a non-abelian group $M = (M,\cdot)$, such models instead take the form
\begin{equation*}
x \op y = x f(x^{-1} y).
\end{equation*}
For such models, the verification of an equational law in $n$ variables corresponds to a functional equation for $f$ in $n-1$ variables, as the translation symmetry allows one to normalize one variable to be the identity (say). This can simplify an implication to the point where an explicit counterexample can be found.  These functional equations are trivial to analyze when $n=1$.  For $n=2$, they are not as trivial, but still quite tractable, and has led to several refutations in practice.  The method does not appear to be particularly effective for $n>2$ due to the complexity of the functional equations.

\begin{example}[Abelian example]\label{abex}  For the law $\x \formaleq (\x \op \y) \op ((\x \op \y) \op \y)$ \eqref{eq1648}, we apply the abelian translation-invariant model \eqref{xop-add} with $y=x+h$ to obtain
\begin{align*}
  x \op y &= x + f(h) \\
  (x \op y) \op y &= x + f(h) + f(h-f(h)) \\
  (x \op y) \op ((x \op y) \op y) &= x + f(h) + f(f(h-f(h)))
\end{align*}
so that this model satisfies $\Eq{1648}$ if and only if the functional equation
$$f(h) + f(f(h-f(h))) = 0$$
holds for all $h \in M$.  Similarly, the law $\x \formaleq (\x \op (\x \op \y)) \op \y$ \eqref{eq206} is satisfied if and only if
$$ f(f(h)) + f(h - f(f(h))) = 0$$
for all $h \in M$.  One can now check that the function $f \colon \Z \to \Z$ defined by $f(h) \coloneqq - \mathrm{sgn}(h)$ (thus $f(h)$ equals $-1$ when $h$ is positive, $+1$ when $h$ is negative, and $0$ when $h$ is zero) satisfies the first functional equation but not the second, thus establishing that $\Eq{1648} \nmodels \Eq{206}$.
\end{example}

\begin{example}[Non-abelian example]\label{trans-nonab}  We now obtain the opposite refutation $\Eq{206} \nmodels \Eq{1648}$ to \Cref{abex} using the non-abelian translation-invariant model.  By similar calculations to before, we now seek to find a function $f \colon M \to M$ on a non-abelian group $(M,\cdot)$ that satisfies the functional equation
\begin{equation}\label{206-eq}
 f(f(h)) f(f(f(h))^{-1} h) = 1
\end{equation}
for all $h \in M$, but fails to satisfy the functional equation
\begin{equation}\label{1648-eq}
   f(h) f(f(f(h)^{-1} h)) = 1
\end{equation}
for at least one $h \in M$.  Now take $M$ to be the group generated by three generators $a,b,c$ subject to the relations $a^2=b^2=c^2=1$, or equivalently the group of reduced words in $a,b,c$ with no adjacent letters in the word equal.  We define
$$ f(1) = 1, f(a)=b, f(b) = c, f(c) = a$$
and then $f(aw)=a$ for any non-empty reduced word $w$ not starting with $a$, and similarly for $b$ and $c$.  The equation \eqref{206-eq} can be checked directly for $h=1,a,b,c$.  If $h=aw$ with $w$ non-empty, reduced, and not starting with $a$, then $f(f(h))^{-1} = f(f(h)) = b$ and $f(f(f(h))^{-1} h) = f(baw) = b$, giving \eqref{206-eq} in this case, and similarly for cyclic permutations. Meanwhile, \eqref{1648-eq} can be checked to fail for $h=a$.
\end{example}

\begin{remark}  The construction in \Cref{trans-nonab} also has the following more ``geometric'' interpretation.  The carrier $M$ can be viewed as the infinite $3$-regular tree, in which every vertex imposes a cyclic ordering on its $3$ neighbors (for instance, if we embed $M$ as a planar graph, we can use the clockwise ordering).  For $x,y \in M$, we then define $x \op y$ to equal $x$ if $x=y$.  If $y$ is instead a neighbor of $x$, we define $x \op y$ to be the next neighbor of $x$ in the cyclic ordering.  Finally, if $y$ is distance two or more from $x$, we define $x \op y$ to be the neighbor of $x$ that is closest to $y$.  One can then check that this model satisfies \eqref{206-eq} but not~\eqref{1648-eq}.
\end{remark}

\begin{remark} These constructions are necessarily infinitary in nature, because $\Eq{206}$ and $\Eq{1648}$ can be shown to be equivalent for finite magmas. Indeed, $\Eq{206}$ can be written as $x = R_y L_x L_x y$, which implies that $R_y$ is surjective, hence injective, on a finite magma; writing $x = R_y z$ we conclude that $R_y z = R_y L_{z \op y} L_{z \op y} y$ and hence $z = L_{z \op y} L_{z \op y} y$, giving $\Eq{1648}$.  The opposite implication is similar (using $\Eq{1648}$ to show that $R_y$ is injective, hence surjective), and is left to the reader.
\end{remark}

  Some refutations $\E \nmodels {\E'}$ are ``immune'' to translation-invariant models, in the sense that any translation-invariant model that satisfies $\E$, also satisfies ${\E'}$.  One obstruction is that for such models, the squaring map $S$ is necessarily an invertible map, since $Sx = x + f(0)$ in the abelian case and $Sx = xf(1)$ in the non-abelian case. On the other hand, adding the assumption of invertibility of squares can sometimes force the implication $\E \models {\E'}$ to hold.  For instance, the commutative law $\x \op (\y \op \y) \formaleq (\y \op \y) \op \x$ \eqref{eq4482} for a square and an arbitrary element will imply the full commutative law $\Eq{43}$ for translation-invariant models due to the surjectivity of $S$, but does not imply it in general (as one can easily see by considering models where $S$ is constant).

\subsection{The twisting semigroup}\label{twisting-sec}

Suppose one has a magma $\Magma$ satisfying a law $\E$, that also enjoys some endomorphisms $T, U \colon \Magma \to \Magma$.  Then one can ``twist'' the operation $\op$ by $T,U$ to obtain a new magma operation
\begin{equation}\label{twist} x \op' y := Tx \op Uy.
\end{equation}
If one then tests whether this new operation $\op'$ satisfies the same law $\E$ as the original operation $\op$, one will find that this will be the case provided that $T,U$ satisfy a certain set of relations.  The semigroup generated by formal generators $\mathrm{T}, \mathrm{U}$ with these relations will be called the \emph{twisting semigroup} $\operatorname{Twist}_\E$ of $\E$.  This can be best illustrated with some examples.

\begin{example}  We compute the twisting semigroup of $\x \formaleq (\y \op \x) \op (\x \op (\z \op \y))$ \eqref{eq1485}.  We test this law on the operation \eqref{twist}, thus we consider whether
$$x = (y \op' x) \op' (x \op' (z \op' y))$$
holds for all $x,y,z \in M$.  Substituting in \eqref{twist} and using the homomorphism property repeatedly, this reduces to
$$x = (T^2y \op TUx) \op (UTx \op (U^2T z \op U^3y)).$$
If we impose the conditions $TU=UT=\mathrm{id}$, $T^2 = U^3$, then this equation would follow from $\Eq{1485}$ (with $x,y,z$ replaced with $TUx=UTx=x$, $T^2 y$, and $U^2 Tz$ respectively).  Thus the twisting semigroup $\operatorname{Twist}_{\Eq{1485}}$ of $\Eq{1485}$ is generated by two generators $\mathrm{T}, \mathrm{U}$ subject to the relations $\mathrm{T} \mathrm{U}=\mathrm{U} \mathrm{T} = 1$, $\mathrm{T}^2 = \mathrm{U}^3$.  This is a cyclic group of order $5$, since the relations can be rewritten as $\mathrm{T}^5 = 1$, $\mathrm{U} = \mathrm{T}^{-1}$.

Now consider $\x \formaleq (\x \op \x) \op (\x \op \x)$ \eqref{eq151}.  Applying the same procedure, we arrive at
$$x = (T^2 x \op TUx) \op (UT x \op U^2 x)$$
so the twisting group $\operatorname{Twist}_{\Eq{151}}$ is generated by two generators $\mathrm{T}, \mathrm{U}$ subject to the relations $\mathrm{T} \mathrm{U}=\mathrm{U} \mathrm{T} = \mathrm{T}^2 = \mathrm{U}^2 = 1$.  This is a cyclic group of order $2$, since the relations can be rewritten as $\mathrm{T}^2 = 1$, $\mathrm{U} = \mathrm{T}$.
\end{example}

Suppose the twisting semigroup $\operatorname{Twist}_\E$ is not a quotient of $\operatorname{Twist}_{{\E'}}$, in the sense that the relations that define $\operatorname{Twist}_{{\E'}}$ are not satisfied by the generators of $\operatorname{Twist}_\E$.  Then one can often disprove the implication $\E \models {\E'}$ by attempting the following procedure.
\begin{itemize}
\item First, locate a non-trivial magma $\Magma = (M,\op)$ satisfying the law $\E$.  Then the Cartesian power $M^{\operatorname{Twist}_\E}$ of tuples $(x_W)_{W \in \operatorname{Twist}_\E}$, with the pointwise magma operation, will also satisfy $\E$.
\item Furthermore, this Cartesian power admits two endomorphisms $T, U$ defined by
$$ T (x_W)_{W \in \operatorname{Twist}_\E} = (x_{W \mathrm{T}})_{W \in \operatorname{Twist}_\E};
U (x_W)_{W \in \operatorname{Twist}_\E} = (x_{W \mathrm{U}})_{W \in \operatorname{Twist}_\E},$$
which satisfy the relations defining $\operatorname{Twist}_\E$.
\item We now twist the magma operation $\op$ on $M^{\operatorname{Twist}_\E}$ by $T,U$ to obtain a new magma operation $\op'$ defined by \eqref{twist}, that will still satisfy law $\E$.
\item Because $T, U$ will not satisfy the relations defining $\operatorname{Twist}_{{\E'}}$, it is highly likely that this twisted operation will not satisfy ${\E'}$, thus refuting the implication $\E \models {\E'}$.  If $M$ and the twisting semigroup were finite, this approach should also refute $\E \modelsfin {\E'}$.
\end{itemize}

For instance, a non-trivial finite model for $\Eq{1485}$ is given by the finite field $\mathbb{F}_2$ of two elements with the NAND operation $x \op y \coloneqq 1-xy$.  If we twist $\mathbb{F}_2^5$ by the left shift $T(x_i)_{i=1}^5 = (x_{i+1})_{i=1}^5$ and right shift $U(x_i)_{i=1}^5 = (x_{i-1})_{i=1}^5$, where we extend the indices periodically modulo $5$, then the resulting operation
$$ (x_i)_{i=1}^5 \op' (y_i)_{i=1}^5 \coloneqq (1 - x_{i+1} y_{i-1})_{i=1}^5$$
on $\mathbb{F}_2^5$ still satisfies $\Eq{1485}$, but does not satisfy $\Eq{151}$, thus showing that $\Eq{1485} \nmodelsfin \Eq{151}$ and hence $\Eq{1485} \nmodels \Eq{151}$.  This particular implication does not seem to be easily refuted by any of the other methods discussed in this paper.

\subsection{Greedy constructions}\label{greedy-sec}

We have found \emph{greedy extension methods}, or \emph{greedy methods} for short, to be a powerful way to refute implications, especially when the carrier $M$ is allowed to be infinite.  Such constructions have a long history in model theory, with possibly the earliest\footnote{We thank Stanley Burris for this reference.} such construction due to Skolem \cite{skolem}. A basic implementation of this method is as follows.  To build a magma operation $\op \colon M \times M \to M$ that satisfies one law $\E$ but not another ${\E'}$, one can first consider \emph{partial magma operations} $\op \colon \Omega \to M$, defined on some subset $\Omega$ of $M \times M$. Thus $x \op y$ is defined if and only if $(x,y) \in \Omega$. A magma operation is then simply a partial operation which is \emph{total} in the sense that $\Omega = M \times M$.  We say that a partial magma operation is \emph{finitely supported} if $\Omega$ is finite.

In the language of first-order logic (in which functions and relations must be total), it is convenient to view a partial magma operation as a ternary relation $R(x,y,z)$ on $M$ with the axiom that $R(x,y,z) \wedge R(x,y,z') \implies z=z'$ for all $x,y,z \in M$.  The support $\Omega$ is then the set of $(x,y)$ for which $R(x,y,z)$ holds for some (necessarily unique) $z$, which one can then take to be the definition of $z = x \op y$.

We say that one partial operation $\op' \colon \Omega' \to M$ \emph{extends} another $\op \colon \Omega \to M$ if $\Omega'$ contains $\Omega$, and $x \op y = x \op' y$ whenever $x \op y$ (and hence $x \op' y$) are defined. Given a sequence $\op_n \colon \Omega_n \to M$ of partial operations, each of which is an extension of the previous, we can define the \emph{direct limit} $\op_\infty \colon \bigcup_n \Omega_n \to M$ to be the partial operation defined by $x \op_\infty y \coloneqq x \op_n y$ whenever $(x,y) \in \Omega_n$.
\looseness=-1

Abstractly, the greedy algorithm strategy can now be described as follows.

\begin{theorem}[Abstract greedy algorithm]\label{greedy-abstract} Let $\E,{\E'}$ be equational laws, and let $\Gamma$ be a theory of first-order sentences regarding a partial magma operation $\op \colon \Omega \to M$ on a carrier $M$.  Assume the following axioms:
  \setlength{\parskip}{0pt}
\begin{itemize}
  \item[(i)] (Seed) There exists a finitely supported partial magma operation $\op_0 \colon \Omega_0 \to M$ satisfying $\Gamma$ that contradicts ${\E'}$, in the sense that there is some assignment of variables in ${\E'}$ in $M$ such that both sides of ${\E'}$ are defined using $\op_0$, but not equal to each other.
  \item[(ii)]  (Soundness)  If $\op_n \colon \Omega_n \to M$ is a sequence of partial magma operations satisfying $\Gamma$ with each $\op_{n+1}$ an extension of $\op_n$, and the direct limit $\op_\infty$ is total, then this limit satisfies $\E$.
  \item[(iii)] (Greedy extension)  If $\op \colon \Omega \to M$ is a finitely supported partial magma operation satisfying $\Gamma$, and $a,b \in M$, then there exists a finitely supported extension $\op' \colon \Omega' \to M'$ of $\op$ to a possibly larger carrier $M'$, and also satisfying $\Gamma$, such that $a \op' b$ is defined.
\end{itemize}
Then $\E \nmodels {\E'}$.
\end{theorem}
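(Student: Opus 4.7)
The plan is to construct, by a countable greedy iteration, a chain of finitely supported partial magma operations $\op_n \colon \Omega_n \to M_n$ satisfying $\Gamma$, with each $\op_{n+1}$ extending $\op_n$, so that the direct limit $\op_\infty \coloneqq \bigcup_n \op_n$ on $M_\infty \coloneqq \bigcup_n M_n$ is a \emph{total} magma operation. The three axioms were designed exactly for this: (i) provides a base point carrying an explicit violation of ${\E'}$, (iii) lets us enlarge the domain one pair at a time while staying inside $\Gamma$, and (ii) promotes the chain's limit to a model of $\E$.

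Concretely, I would start with $\op_0 \colon \Omega_0 \to M_0$ and the fixed variable assignment $v \colon \mathrm{Var}({\E'}) \to M_0$ from (i), at which both sides of ${\E'}$ are defined under $\op_0$ but unequal. At each later stage $n$, I would pick a pair $(a_n, b_n) \in M_n \times M_n$ and invoke (iii) to obtain a finitely supported extension $\op_{n+1}$ satisfying $\Gamma$ in which $a_n \op_{n+1} b_n$ is defined. To force totality, I would maintain a FIFO queue of pending pairs: whenever $M_{n+1}$ introduces new carrier elements I enqueue all the newly created pairs (finitely many, since $M_{n+1} \setminus M_n$ is finite), and at stage $n$ I serve the head of the queue. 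A standard dovetailing argument then shows every pair in $M_\infty \times M_\infty$ lies in the domain of some $\op_n$, so $\op_\infty$ is total.

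Set $\Magma_\infty \coloneqq (M_\infty, \op_\infty)$. By the soundness axiom (ii), $\Magma_\infty \models \E$. On the other hand, extensions never reassign previously defined products, so under the seed assignment $v$ both sides of ${\E'}$ evaluate in $\Magma_\infty$ to exactly the two distinct elements of $M_0$ that witnessed failure under $\op_0$; hence $\Magma_\infty \not\models {\E'}$, which gives $\E \nmodels {\E'}$. The main obstacle is purely the bookkeeping of the enumeration: because the carrier grows along the chain, one must dovetail newly created pairs into the queue in a fair order so that none is postponed forever. The FIFO strategy handles this since each extension introduces only finitely many new elements, and hence only finitely many new pairs, at each step.
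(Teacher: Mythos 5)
Your proposal is correct and takes essentially the same approach as the paper: iterate the greedy extension axiom over a fair enumeration of all pairs, pass to the total direct limit, and conclude via soundness for $\E$ and the preserved seed violation for ${\E'}$. The only (immaterial) difference is bookkeeping: the paper relabels everything onto the fixed countable carrier $\N$ and well-orders $\N \times \N$ once and for all, whereas you let the carrier grow and dovetail newly created pairs with a FIFO queue.
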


We remark that this greedy method seems to be inherently infinitary in nature, and does not seem well adapted to refute finite magma implications $\E \modelsfin {\E'}$.

\begin{proof}  We work on the countably infinite carrier $\N$.  By embedding the finitely supported operation $\op_0$ from axiom (i) into $\N$, we can assume without loss of generality that $\op_0$ has carrier $\N$.  By similar relabeling, we can assume in (iii) that $M' = M$ when $M=\N$, since any elements of $M' \setminus \N$ that
appear in $\Omega'$ can simply be reassigned to natural numbers that did not previously appear in $\Omega$.  We well-order the pairs in $\N \times \N$ by $(a_n,b_n)$ for $n=1,2,\dots$.  Iterating (iii) starting from $\op_0$, we can thus create a sequence of finitely supported magma operations $\op_0, \op_1, \dots$ on $\N$ satisfying $\Gamma$, with each $\op_{n+1}$ an extension of $\op_n$, and $a_n \op_n b_n$ defined for all $n \geq 1$.  Then the direct limit $\op_\infty$ of these operations is total, and does not satisfy ${\E'}$ thanks to axiom (i).  On the other hand, by axiom (ii) it satisfies $\E$, and the claim follows.
\end{proof}

We refer to $\Gamma$ as the \emph{rule set} for the greedy extension method. To apply \Cref{greedy-abstract} to obtain a refutation $\E \nmodels {\E'}$, we have found the following trial-and-error method to work well in practice:
\begin{itemize}
\item[1.] Start with a minimal rule set $\Gamma$ that has just enough axioms to imply the soundness property for the given hypothesis $\E$.
\item[2.] Attempt to establish the greedy extension property for this rule set by setting $a \op' b$ equal to a new element $c \not \in M$, and then defining additional values of $\op'$ as necessary to recover the axioms of $\Gamma'$.
\item[3.]  If this can be done in all cases, then locate a seed $\op_0$ refuting the given target ${\E'}$, and \texttt{STOP}.
\item[4.]  If there is an obstruction (often due to a ``collision'' in which a given operation $x \op' y$ is required to equal two different values), add one or more rules to $\Gamma$ to avoid this obstruction, and return to Step 2.
\end{itemize}

As an example, we present

\begin{proposition}[$\Eq{73} \nmodels \Eq{4380}$]\label{73-4380} The law $\x \formaleq \y \op (\y \op (\x \op \y))$ \eqref{eq73} does not imply $\x \op (\x \op \x) \formaleq (\x \op \x) \op \x$ \eqref{eq4380}.
\end{proposition}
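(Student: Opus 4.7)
The plan is to apply the abstract greedy algorithm (Theorem~\ref{greedy-abstract}) with $\E = \Eq{73}$ and ${\E'} = \Eq{4380}$. I take $\Gamma$ to be the partial-operation version of $\Eq{73}$: whenever $x \op y$, $y \op (x \op y)$, and $y \op (y \op (x \op y))$ are all defined for some $(x, y)$, they satisfy $y \op (y \op (x \op y)) = x$. Soundness in the total limit is then immediate, since every three-product chain is defined in a total magma.

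For the seed, take the partial magma on the four-element carrier $\{0, a, b, c\}$ (with $b \neq c$) given by $0 \op 0 = a$, $0 \op a = b$, $0 \op b = 0$, and $a \op 0 = c$. The only completed chain is $(0, 0) \to a \to b \to 0$, which satisfies $\Eq{73}$, so the seed lies in $\Gamma$. At the same time $\Eq{4380}$ fails at $x = 0$, since $0 \op (0 \op 0) = b$ and $(0 \op 0) \op 0 = c$ are distinct.

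The main work is the greedy extension. Given $(p, q)$ with $p \op q$ undefined, I first check whether $\Gamma$ already forces a specific value: if there exist $v, z$ in the current carrier with $v \op p = z$ and $p \op z = q$, then $\Eq{73}$ requires $p \op q := v$, and I use this value. Otherwise I introduce two fresh elements $r, s$ not in the current carrier and set $p \op q := r$, $q \op r := s$, $q \op s := p$. This triple exactly witnesses $\Eq{73}$ on $(p, q)$, and since $r, s$ are fresh they cannot appear as outputs of any prior entry and therefore cannot occupy the middle of any previously-existing chain; the only newly completed triple is $(p, q)$ itself, which is consistent by construction.

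The main obstacle I expect is verifying in detail that this step preserves $\Gamma$. In the forced branch one must argue uniqueness of $v$: any two forcing chains must yield the same $v$, which reduces to an injectivity property of $L_p$ that I would carry as an auxiliary invariant throughout the construction ($R_p$-injectivity being automatic from $\Eq{73}$). In the fresh branch, when $p$ is already in the image of $L_q$ (some pre-existing $w$ with $q \op w = p$), setting $q \op s := p$ would produce a second preimage of $p$ under $L_q$ and violate the invariant; the remedy is a variant of the greedy step that sets $q \op r := w$ and dispenses with $s$. Once these cases, together with the corner case $p = q$, are verified, the three hypotheses of Theorem~\ref{greedy-abstract} are in place and yield an infinite magma witnessing $\Eq{73} \nmodels \Eq{4380}$.
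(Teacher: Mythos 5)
Your overall strategy (the greedy algorithm of Theorem~\ref{greedy-abstract}, with essentially the same four-element seed) matches the paper's, but your rule set is ``lazy'' where the paper's is ``eager'', and this is not a cosmetic difference: it leaves your greedy extension step with genuine unresolved failure modes. The most concrete one is in your forced branch. When you set $p \op q := v$ because of a pre-existing partial chain $v \op p = z$, $p \op z = q$, the new entry may \emph{also} newly complete the chain in which $p \op q$ is the innermost product: if $q \op v$ and $q \op (q \op v)$ happen to be already defined, your rule now demands $q \op (q \op v) = p$, and nothing in your invariants guarantees this. (A symmetric problem occurs when $p \op q = v$ sits in the middle of a chain $x \op p = q$, $p \op q = v$, $p \op v$ already defined.) A second gap is the $p=q$ corner case you flag but do not resolve: if some $x \op p = p$ pre-exists, your fresh branch defines $p \op p = r$, $p \op r = s$, and the old entry $x \op p = p$ then completes a chain forcing $s = x$ with $s$ fresh --- a contradiction. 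Ruling this out requires an extra invariant (the paper's Rule~3, ``$x \op y$ is never equal to $y$''), which your forced branch, by assigning an old element as a value, does not obviously preserve; likewise your proposed remedy for the case $p \in \operatorname{im}(L_q)$ (setting $q \op r := w$) can itself destroy the $L_q$-injectivity you need for uniqueness of $v$.

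The paper sidesteps all of this by stating Rule~1 eagerly: as soon as $y \op (x \op y)$ is defined, $y \op (y \op (x \op y))$ must be defined and equal to $x$. Under that rule, ``$a \op b$ undefined'' already implies that no partial chain is waiting to force its value, so the greedy step can \emph{always} assign a fresh element $c$, and the only induced obligation is the single extra product $a \op c = d$ when $b = d \op a$; the resulting case analysis is short and closes. Your lazy formulation is not unsalvageable, but making it work would require adding precisely the invariants (right-injectivity, no fixed points of right multiplication, and an eager completion discipline) that reconstruct the paper's rule set, and as written the proof is incomplete.
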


\begin{proof} To build a rule set $\Gamma$ that will imply $\Eq{73}$ when total, a natural first choice would be the single rule
\begin{itemize}
\item[1.] If $y \op (x \op y)$ is defined, then $y \op (y \op (x \op y))$ is defined and equal to $x$.
\end{itemize}
However, the greedy algorithm will fail just with this rule: if the partial operation has $x \op y$ and $z \op y$ both equal to some $w$ for some $x \neq z$, then any attempt to assign a value to $y \op (y \op w)$ will lead to a contradiction, as the above rule will force $y \op w$ to equal both $x$ and $z$.  Indeed, it is clear that $\Eq{73}$ forces all the right translation operators $R_y$ to be injective.  We therefore add this as an additional rule:
\begin{itemize}
\item[2.] If $x \op y$ and $z \op y$ are defined and equal, then $x=z$.
\end{itemize}
To avoid some unwanted edge cases, it is also convenient to impose the additional rule
\begin{itemize}
  \item[3.] If $x \op y$ is defined, it is not equal to $y$.
\end{itemize}
Unlike Rule 2, this rule is not forced by $\Eq{73}$, but can be enforced as part of the greedy construction.

The ruleset clearly satisfies the soundness axiom (ii) of \Cref{greedy-abstract}.  We now verify the greedy extension axiom (iii).  Let $\Omega,a,b$ be as in that axiom. We may assume that $a \op b$ is undefined, since we are done otherwise. We adjoin a new element $c$ to $M$ to create $M'$, and set $a \op' b = c$.  If we also have $b = d \op a$ for some $d$ (unique by Rule 2, and only possible for $a \neq b$ by Rule 3), set $a \op' c = d$ (this is necessary to retain Rule 1).  Of course, we also set $x \op' y = x \op y$ whenever $x \op y$ is already defined.

Since $c \not \in M$, it is clear that $\op'$ is a finitely supported partial magma operation on $M'$.  It is also clear that $\op'$ satisfies Rule 2 and Rule 3.   Now we case check Rule 1:
\begin{itemize}
\item Case 1: $x=c$ or $y=c$.  Not possible since no left multiplication with $c$ is defined.
\item Case 2: $x \op' y = c$.  Only possible when $x = a$, $y = b$, but then $y \op' (x \op' y)$ is undefined since $y = b \neq a$ if $d$ is defined.
\item Case 3. $y \op' (x \op' y) = c$.  Only possible when $y=a$ and $x=d$, and holds in this case.
\item Case 4: $x, y, x \op' y, y \op' (x \op' y) \neq c$: In this case $\op'=\op$ on all pairs, so the claim reduces to Rule 1 for $\op$, which holds by the induction hypothesis.
\end{itemize}
To conclude, we need to locate a seed $\op_0$ satisfying Rules 1,2,3 and containing a counterexample to $\Eq{4380}$.  One simple example is the carrier $\{0,1,2,3\}$ with $0 \op_0 0 = 1$, $0 \op_0 1 = 2$, $0 \op_0 2 = 0$, $1 \op_0 0 = 3$.
\end{proof}

This method is not guaranteed to halt in finite time, as there may be increasingly lengthy sets of rules one has to add to $\Gamma$ to avoid collisions.  However, in practice we have found many of the refutations that could not be resolved by simpler techniques to be amenable to this method (or variants thereof, as discussed below).

One can automate the above procedure by using ATPs (or SAT solvers) to locate new rules that are necessary and sufficient to resolve any potential collision (and which, \emph{a posteriori}, can be seen to be necessarily consequences of the law $\E$).  The seed-finding step (Step 3) is particularly easy to automate, and can also often be done by hand.  In some cases, the SAT solver calculations provided by these methods were difficult to formalize efficiently in Lean, and so we elected in some cases to replace computer-generated rulesets with shorter human-generated versions in preparation for the formalization step.

However, in some cases we have found it necessary to add ``inspired'' choices of rules that were not forced by the initial hypothesis $\E$, but which simplified the analysis by removing problematic classes of collisions from consideration.  We were unable to fully automate the process of guessing such choices; however, we found ATPs very useful for testing any proposed such guess.  In particular, if an ATP was able to show that the existing ruleset, together with a proposed new rule $A$, implied ${\E'}$, then this clearly indicated that one should not add $A$ to the rule set $\Gamma$.  Conversely, if an ATP failed to establish such an implication, this was evidence that this was a ``safe'' rule to impose.

We also found that human verification of the greedy extension property was a highly error-prone process, as the case analysis often included many delicate edge cases that were easy to overlook.  Both ATPs and the Lean formalization therefore played a crucial role in verifying the human-written greedy arguments, often revealing important gaps in those arguments that required either minor or major revisions to the rule set.

The greedy method can also be combined with the translation-invariant method, both in abelian and non-abelian settings. For instance, we can modify the proof of \Cref{greedy-abstract} to obtain the following variant:

\begin{theorem}[Noncommutative translation-invariant greedy algorithm]\label{nc-greedy-abstract} Let $F,F'$ be functional equations on groups, and let $\Gamma$ be a theory of first-order sentences regarding a partial function $f \colon \Omega \to G$ on a group $(G,\cdot, \cdot^{-1},1)$.  Assume the following axioms:
  \setlength{\parskip}{0pt}
  \begin{itemize}
    \item[(i)] (Seed) There exists a finitely supported partial function $f_0 \colon \Omega_0 \to G$ satisfying $\Gamma$ that contradicts $F'$, in the sense that there is some assignment of variables in $F'$ in $G$ such that both sides of $F'$ are defined using $f_0$, but not equal to each other.
    \item[(ii)]  (Soundness)  If $f_n \colon \Omega_n \to G$ is a sequence of partial functions satisfying $\Gamma$ with each $f_{n+1}$ an extension of $f_n$, and the direct limit $f_\infty$ is total, then this limit satisfies $F$.
    \item[(iii)] (Greedy extension)  If $f \colon \Omega \to G$ is a finitely supported partial function satisfying $\Gamma$, and $a \in G$, then there exists a finitely supported extension $f' \colon \Omega' \to G'$ of $f$ to a possibly larger group $G'$, and also satisfying $\Gamma$, such that $f'(a)$ is defined.
  \end{itemize}
  Then $F \nmodels F'$.
\end{theorem}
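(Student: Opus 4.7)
The plan is to mirror the proof of \Cref{greedy-abstract}, adapting the dovetailing argument from the magma setting (where we enumerate pairs in $\N \times \N$) to the group setting (where we enumerate elements of a countable group that is itself being built). The key observation is that axiom (iii) corresponds exactly to the greedy extension axiom in the magma case, and the ``partial function on a group'' replaces the ``partial magma operation'' throughout.

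First I would apply axiom (i) to obtain the seed $f_0 \colon \Omega_0 \to G_0$. Since $\Omega_0$ is finite and the group operation is countable-closed, I may assume $G_0$ is countable (replacing it, if necessary, by the subgroup generated by $\Omega_0 \cup f_0(\Omega_0)$). Then I would inductively build a chain $f_0 \subseteq f_1 \subseteq f_2 \subseteq \cdots$ of finitely supported partial functions on a chain of countable groups $G_0 \subseteq G_1 \subseteq \cdots$, using axiom (iii) at each stage to add one new element to the support. To make sure $f_\infty \coloneqq \bigcup_n f_n$ is total on $G_\infty \coloneqq \bigcup_n G_n$, I would dovetail: fix an enumeration $a_n^{(1)}, a_n^{(2)}, \dots$ of each $G_n$, and use a Cantor-style pairing $\N \cong \N \times \N$ so that at the stage indexed by $(n,k)$ (with $n$ no larger than the current stage number) I apply (iii) with $a = a_n^{(k)}$. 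This schedule ensures that every element of $G_\infty$, which lies in some $G_n$, is eventually placed in the support after finitely many further stages.

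Once $f_\infty$ is total, axiom (ii) says it satisfies $F$, while the seed assignment witnessing the failure of $F'$ in $f_0$ continues to witness the failure in $f_\infty$, since $f_\infty$ agrees with $f_0$ on the finitely many inputs relevant to that assignment and leaves both sides of $F'$ defined and unequal. This gives $F \nmodels F'$. The main obstacle I expect is the bookkeeping in the dovetailing: unlike in the magma case, where the carrier can be fixed as $\N$ once and for all by relabeling, here the group $G_n$ genuinely grows during the iteration (as new elements and possibly new generators are introduced by (iii)), so later-appearing elements must still be scheduled. The Cantor pairing handles this cleanly precisely because each element of $G_\infty$ enters some $G_n$ at a definite finite stage, after which only finitely many further steps are needed to reach its enumeration index.
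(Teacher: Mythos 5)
Your argument is correct and is essentially the proof the paper has in mind: the paper derives this theorem as a direct variant of the proof of \Cref{greedy-abstract}, and your dovetailing schedule is exactly the right replacement for the fixed enumeration of $\N\times\N$ used there, needed because here the group genuinely grows under axiom (iii) rather than being relabeled back into a fixed countable carrier. (The only soft spot, well within the paper's own level of informality, is the claim that passing to the subgroup generated by the seed data preserves $\Gamma$; for a general first-order $\Gamma$ one would instead invoke a countable elementary substructure or simply note that in practice the seed is given on a countable group.)
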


One can of course also develop an abelian analogue of the above theorem, in which $(G,+,-,0)$ and $(G',+,-,0)$ are now required to be abelian.  We can then give an alternate proof of \Cref{73-4380} as follows:

\begin{proof}[Second proof of \Cref{73-4380}] (Sketch)  The functional equations associated to $\Eq{73}$ and $\Eq{4380}$ are
$f^2(h^{-1} f(h)) =h^{-1}$ and $f^2(1) = f(1) f(f(1)^{-1})$ respectively.  We apply \Cref{nc-greedy-abstract} with the following ruleset:
\begin{itemize}
  \item[1.]  If $f(h^{-1} f(h))$ is defined, then $f^2(h^{-1} f(h))$ is defined and equal to $h^{-1}$.
  \item[2.]  If $h^{-1} f(h)$ and $k^{-1} f(k)$ are defined and equal, then $h=k$.
  \item[3.]  If $f(h)$ is defined, it is not equal to $h$.
\end{itemize}
Axiom (ii) is clear.  To verify axiom (iii), we can assume $f(h)$ is undefined, then adjoin an element $c$ freely to $G$ to create a larger group $G'$, and set $f'(h) = c$.  If $h = k^{-1} f(k)$ for some $k$ (which is unique by Rule 2, and only possible for $h \neq 1$ by Rule 3), then also set $f'(c) = k^{-1}$.  One can then check that axiom (iii) is satisfied.  For axiom (i), take $G$ to be a free cyclic group with one generator $a$, and set $f(1) = a$, $f(a) = a^3$, $f(a^3) = 1$, $f(a^{-1}) = a^3$ (say).
\end{proof}

More complex (and \emph{ad hoc}) variants of the greedy algorithm are possible.  In some cases, we were not able to preserve the finitely supported nature of the partial operation or partial function, and needed to extend that partial object at an infinite number of values at each step.  In other cases, one also had to add additional temporary data during the greedy process to record tasks that one wished to attend to at a later stage of the process, but could not handle immediately because it was awaiting some other operation to become well-defined.  We will not attempt to survey all possible variants of this method here, but refer the reader to the ETP blueprint for further examples.

\subsection{Modifying base models}\label{modify-base}

A general technique that we have found useful in obtaining a refutation such as $\E \nmodels {\E'}$ is to start with a simple base model $\Magma = (M,\op)$ that satisfies both $\E$ and ${\E'}$, and modify it in various ways to preserve $\E$, but create a violation of ${\E'}$.  There are many such possible modifications, but three general ways that have proven effective are as follows:

\begin{itemize}
  \item[(i)]  Modify the magma operation $\op \colon M \times M \to M$ on a small set in order to violate ${\E'}$, and then make further modifications as needed to recover $\E$.
  \item[(ii)]  Construct an \emph{extension} $\MagmaN$ of $\Magma$, equipped with a surjective magma homomorphism $\pi: \MagmaN \to \Magma$, and defined in terms of some additional data.  Then solve for that data in such a way that $\MagmaN$ satisfies $\E$ but not ${\E'}$.
  \item[(iii)]  Construct an \emph{enlargement} $\Magma' = (M',\op')$ of $\Magma = (M,\op)$, which is a magma that contains $\Magma$ as a submagma.  One needs to construct the multiplication table $\op$ on $(M' \times M') \backslash (M \times M)$ in order to retain $\E$ but disprove ${\E'}$.
\end{itemize}

One appealing case of (ii) that our project discovered, involving a ``magma cohomology'' analogous to (abelian) group cohomology, is that of an \emph{affine} extension of a magma ${\mathcal G} = (G,\op_G)$ by another magma $(M,\op_M)$ which is an abelian group $M$ with a linear magma operation $s \op_M t \coloneqq as + bt$ for some endomorphisms $a,b \in \mathrm{End}(M)$.  One can then consider extensions with carrier $G \times M$ and magma operation
\begin{equation}\label{xsyt}
 (x, s) \op (y, t) \coloneqq (x \op_G y, s \op_M t + f(x,y))
\end{equation}
for some function $f \colon G \times G \to M$.  If $(M,\op_M)$ and $(G,\op_G)$ already satisfy a law $\E$, then this extension will also satisfy $\E$ if and only if $f$ satisfies a certain ``cocycle equation'', which is a linear equation on $f$.  One can then sometimes use linear algebra to locate an $f$ that satisfies the cocycle equation for one law $\E$ but not another ${\E'}$, thus refuting the implication $\E \models {\E'}$.  An example is as follows:

\begin{proposition}[$\Eq{1110} \nmodels \Eq{1629}$]\label{1110-1629} The law $\x \formaleq \y \op ((\y \op (\x \op \x)) \op \y)$ \eqref{eq1110} does not imply $\x \formaleq (\x \op \x) \op ((\x \op \x) \op \x)$ \eqref{eq1629}, even for finite magmas.
\end{proposition}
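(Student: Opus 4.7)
The plan is to apply the magma cohomology construction just introduced, looking for a finite counterexample of the form $G \times M$ with operation
\[
(x, s) \op (y, t) = (x \op_G y, as + bt + f(x, y)),
\]
where $(G, \op_G)$ is a small finite base magma satisfying both $\Eq{1110}$ and $\Eq{1629}$, $(M,+)$ is a finite abelian group, and $a,b \in \mathrm{End}(M)$ are chosen so that the linear magma $s \op_M t = as + bt$ on $M$ also satisfies both laws. Working out $\Eq{1110}$ on a linear operation gives the constraints $a + a^2 b + b^2 = 0$ and $a b^2 (a + b) = 1$, and $\Eq{1629}$ gives $a^2 + ab + a^2 b + a b^2 + b^2 = 1$; a brute-force search over $M = \Z/p\Z$ for small primes $p$ and pairs $(a, b)$ locates solutions, while a separate scan over small multiplication tables identifies candidate $G$.

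The second step is to derive the cocycle equation by expanding both sides of $\Eq{1110}$ on the extension. Setting $Sx \coloneqq x \op_G x$ and $R_G(x, y) \coloneqq (y \op_G Sx) \op_G y$, the first components of both sides reduce to $\Eq{1110}$ in $G$, while the second components, after using $M \models \Eq{1110}$ to cancel the linear terms in $s$ and $t$, collapse to
\[
ab^2\, f(x, x) + ab\, f(y, Sx) + b\, f(R_G(x, y), y) + f(y, R_G(x, y)) = 0
\]
for all $x, y \in G$. The analogous computation for $\Eq{1629}$ gives the pointwise obstruction
\[
a(1 + b)\, f(x, x) + b\, f(Sx, x) + f(Sx, Sx \op_G x) = 0
\]
for all $x \in G$. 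Both are linear conditions on $f \colon G \times G \to M$, cutting out subspaces $Z_{1110}$ and $Z_{1629}$ of $M^{G \times G}$.

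The third step is to apply Gaussian elimination to produce an $f \in Z_{1110} \setminus Z_{1629}$; any such $f$ yields a finite magma $G \times M$ satisfying $\Eq{1110}$ but failing $\Eq{1629}$ at some element $(x, s)$, thereby proving $\Eq{1110} \nmodelsfin \Eq{1629}$ and a fortiori $\Eq{1110} \nmodels \Eq{1629}$.

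The main obstacle is the existence of such an $f$: a priori one could have $Z_{1110} \subseteq Z_{1629}$ for every admissible choice of $(G, M, a, b)$, so that every cohomological deformation preserving $\Eq{1110}$ automatically preserves $\Eq{1629}$. If this happens one must enlarge $G$, pass from $\Z/p\Z$ to $\Z/p^k\Z$ so that $a, b$ may be non-invertible, or allow noncommuting endomorphisms on a more general abelian group. Assuming some small admissible data succeeds (as the proposition asserts), the remaining verification that $G \times M$ is finite, satisfies $\Eq{1110}$, and refutes $\Eq{1629}$ at an explicit point is purely routine and well suited to formalization.
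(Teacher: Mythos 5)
Your proposal follows essentially the same route as the paper: take a linear model (the paper uses $x \op y = 3x - y$ on $\F_5$, for which your constraints $ab^2(a+b)=1$, $a+a^2b+b^2=0$, $a^2+ab+a^2b+ab^2+b^2=1$ are indeed correct and satisfied) as both base and fibre, derive the two linear cocycle conditions on $f$, and use linear algebra to exhibit an $f$ satisfying the $\Eq{1110}$ condition but not the $\Eq{1629}$ one. The only slip is in the third term of your $\Eq{1110}$ cocycle equation, which should be $b\,f(y \op_G Sx,\; y)$ rather than $b\,f(R_G(x,y),\, y)$ --- the $f$-term produced at the stage $(\,\cdot\,) \op y$ has first argument $y \op_G Sx$, not the full product $R_G(x,y) = (y \op_G Sx) \op_G y$ --- and with that correction the computation matches the paper's, which confirms (for $G=M=\F_5$) that the solution space of the first condition is six-dimensional and not contained in that of the second, so your anticipated obstacle does not arise.
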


\begin{proof}[Proof sketch] Using the linear ansatz, we find that $\Eq{1110}$ has a model $\Magma$ with carrier $\F_5$ (the finite field $\Z/5\Z$) with operation $x \op y = 3x-y$.  We then apply the ansatz \eqref{xsyt} with $G=M$.  One then finds that this operation satisfies $\Eq{1110}$ if $f \colon \F_5 \times \F_5 \to \F_5$  the cocycle equation
  $$3f(x,x) - 3f(y,2x) - f(3y-2x,y) + f(y,3y-x) = 0$$
for all $x,y \in \F_5$, and satisfies $\Eq{1629}$ if $f$ satisfies the cocycle equation
$$ f(2x,0) - f(2x,2x) = 0$$
for all $x \in \F_5$.  A routine symbolic algebra package computation reveals that the space of $f$ that satisfies the former equation is a six-dimensional vector space over $\F_5$, which is not contained in the solution space of the latter equation, giving the claim.
In fact, since these equations preserve the space of homogeneous polynomials of a fixed degree, one can use linear algebra to locate an example that is a homogeneous polynomial; one explicit choice is
\[
f(x,y) = y^5 +xy^4 + x^2y^3 +3x^3 y^2 + 3x^4 y. \qedhere
\]
\end{proof}

It may be of interest to develop this theory of ``magma cohomology'' further, for instance by defining higher order magma cohomology groups.

Now we give an example of how method (ii) can be combined with method (i).

\begin{proposition}[$\Eq{1659} \nmodels \Eq{4315}$]\label{1659-4315} The law $\x \formaleq (\x \op \y) \op ((\y \op \y) \op \z)$ \eqref{eq1659} does not imply $\x \op (\y \op \x) \formaleq \x \op (\y \op \z)$ \eqref{eq4315}.
\end{proposition}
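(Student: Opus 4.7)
The plan is to combine methods (i) and (ii) from \Cref{modify-base}, following the template of the proof of \Cref{1110-1629}. The first step is to locate a base magma that already satisfies both laws. Using the linear ansatz $x \op y = ax + by$, a short calculation shows that $\Eq{1659}$ reduces to the constraints $a^2 = 1$, $b^2 = 0$, and $(a+1)b = 0$, whereas $\Eq{4315}$ reduces to $b^2 x = b^2 z$ and is therefore automatically satisfied whenever $b^2 = 0$. A convenient base is thus $\Magma_0 = (\Z/4\Z, \op_0)$ with $x \op_0 y = x + 2y$, which satisfies both laws.

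Next I form an affine cocycle extension of $\Magma_0$ in the sense of \eqref{xsyt}, with carrier $\Z/4\Z \times M$ and operation $(x,s) \op (y,t) = (x \op_0 y, \alpha s + \beta t + f(x,y))$, where the fibre $(M, \op_M)$ with $s \op_M t = \alpha s + \beta t$ is itself chosen to satisfy $\Eq{1659}$. Substituting this ansatz into $\Eq{1659}$ and collecting the $M$-coordinates produces a linear cocycle equation on $f \colon \Z/4\Z \times \Z/4\Z \to M$; the analogous substitution for $\Eq{4315}$ produces a different linear equation. The task is then reduced to finding an $f$ in the solution space of the first equation that fails the second at some specific triple $(x,y,z)$, which is a routine linear algebra (or Gr\"obner basis) computation.

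Should the $\Eq{1659}$-cocycle space happen to sit inside the $\Eq{4315}$-cocycle space for every reasonable choice of $(M, \alpha, \beta)$, I fall back on method (i) and modify the multiplication table of the extension on a small number of pairs. Concretely, I perturb $\op$ at a single pair so that $\Eq{4315}$ fails at one prescribed triple, then trace through the five occurrences of $\op$ on the right-hand side of $\Eq{1659}$ to see which instances of that law are broken, and make further compensating local modifications (or adjust $f$) until $\Eq{1659}$ is globally restored. Since only finitely many pairs are affected, the verification then reduces to a finite case check.

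The main obstacle will be controlling the interaction between the modifications and $\Eq{1659}$: a single alteration of $\op$ at a pair $(p,q)$ can affect many substitutions of $\Eq{1659}$, because $p \op q$ may occur as any of the internal subterms of the right-hand side. I expect to locate the required modification by a computer search over small finite carriers, guided by an ATP or SAT solver as in \Cref{greedy-sec}, and to verify the final example in \emph{Lean} using \texttt{decideFin}.
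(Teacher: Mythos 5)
Your overall strategy is the paper's own: it treats this proposition precisely as an illustration of combining methods (ii) and (i) of \Cref{modify-base}, and your preliminary analysis is sound (the linear constraints $a^2=1$, $b^2=0$, $(a+1)b=0$ for $\Eq{1659}$ are correct, and $\Eq{4315}$ does hold automatically in any such linear model, so one must start from a base satisfying both laws). You also correctly anticipate that the pure cocycle route may fail and that a table modification will be needed; indeed, in the paper's construction the unmodified extension still satisfies $\Eq{4315}$, so the modification step is not a contingency but the entire content of the proof.

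The genuine gap is in that final step, and it is not merely that the details are deferred to a computer search: the search you describe cannot succeed, because $\Eq{1659} \modelsfin \Eq{4315}$. In a finite magma satisfying $\Eq{1659}$, the identity $x = R_{(y\op y)\op z}(R_y(x))$ forces every $R_y$ to be a bijection with $R_{(y\op y)\op z}=R_y^{-1}$ for all $z$. Writing $\rho(a):=R_a$, $K:=\rho(M)$ and $P_\kappa:=\rho^{-1}(\kappa)$, this says $\rho(\tau(\kappa(y)))=\kappa^{-1}$ for all $y\in P_\kappa$ and $\tau\in K$, i.e.\ $\tau\kappa(P_\kappa)\subseteq P_{\kappa^{-1}}$; comparing cardinalities gives $\tau\kappa(P_\kappa)=P_{\kappa^{-1}}$ for every $\tau\in K$, so each set $\kappa(P_\kappa)$ is invariant under the group generated by $\{\tau'^{-1}\tau\}$, and these sets partition $M$. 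Since $\mathrm{Im}(L_y)=\{\tau(y):\tau\in K\}$ is contained in the image under a fixed $\tau_0\in K$ of a single orbit of that group, $\rho$ is constant on $\mathrm{Im}(L_y)$, whence $R_{y\op x}=R_{y\op z}$ and $x\op(y\op x)=x\op(y\op z)$. So no finite counterexample exists, your claim that ``only finitely many pairs are affected'' so that ``the verification reduces to a finite case check'' is false, and a \texttt{decideFin} verification is impossible. This is borne out by the paper's execution: a single altered entry breaks $\Eq{1659}$ at infinitely many instances (the free variable $\z$ propagates the damage), the repair changes $(0,0)\op'(1,t)$ for \emph{all} $t\in\N$, and the final magma is the infinite carrier $G\times\N\cong\Z$. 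To complete your argument you would need to carry out the modification on an infinite carrier and verify $\Eq{1659}$ by a hand (or ATP-assisted) case analysis rather than by finite enumeration.
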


\begin{proof}  There are two simple models for $\Eq{1659}$: the model $G$ with carrier $\Z/2\Z$ and operation $x \op y = x+1$, and the model $\Magma$ with carrier $\Z$ and operation $x \op y = x$.  Using the ansatz \eqref{xsyt}, one can soon discover that one obtains a magma operation $\op: (G \times \Z) \times (G \times \Z) \to G \times \Z$ with $f(0,0)=f(1,0)=0$, $f(0,1)=-1$, and $f(1,1)=1$.  This model still satisfies $\Eq{4315}$. However, we can create a modification $\op'$ of $\op$ as follows.  We will seek to violate $\Eq{4315}$ at $x = (0,0)$, $y = (0,0)$, $z = (1,0)$, thus we want
$$ (0,0) \op' ((0,0) \op' (0,0)) \neq (0,0) \op' ((0,0) \op' (1,0)).$$
We have $(0,0) \op (0,0) = (1,0)$ and $(0,0) \op (1,0) = (1,-1)$.  One can try to force the counterexample by setting $(0,0) \op' (1,0)$ to equal $(0,0)$ instead of $(1,-1)$. However, if this is the only change we make, then we no longer satisfy $\Eq{1659}$, since
$$ (1,0) \neq ((0,0) \op' (1,0)) \op' (((1,0) \op' (1,0)) \op' (1,t))$$
for any $t \in \N \backslash \{0\}$. But these are the only counterexamples created involving elements in the subset $G \times \N$ of $G \times \Z$; and if one then sets $(0,0) \op' (1,t) = (0,0)$ for \emph{all} $t \in \N$, and then restricts to $G \times \N$ (which is now closed under $\op'$), then one can check that the modified operation $\op'$ on this submagma now satisfies $\Eq{1659}$ but not $\Eq{4315}$ as required.
Incidentally, this submagma is isomorphic to the magma $\Magma'=(\Z,\op'')$ with $m\op''n=-m$ for $n<0$ and $m\op''n=-m-1$ for $n\geq 0$ under the bijection that maps $(0,s)$ to $s$ and $(1,s)$ to $-s-1$.
\end{proof}

The specific law $\x \formaleq \x \op ((\y \op \z) \op (\x \op \z))$ \eqref{eq854} turned out to be somewhat ``mutable'', in the sense that one can often change a small number
 of entries in the multiplication table of a finite magma satisfying this law, or add rows and columns to the table,
 in ways that preserve the law $\Eq{854}$.  This makes the law amenable to methods (i) and (iii) to construct new models of this equation that
 refute various implications  $\Eq{854} \nmodelsfin \E$, for instance by starting with a model that already refuted some stronger law ${\E'}$, and then attempting to modify it
 (possibly with ATP assistance) by some combination of methods (i) and (iii) to produce a model that violates $\E$.

 Some heuristics loosely inspired by discrete-time dynamical systems proved helpful.
 The idea is to generate a sequence of magmas, each of which is generated from the previous entry by applying various operations expected to increase the likelihood of
 finding a refutation.  This is similar to the greedy methods in \Cref{greedy-sec}, except that we require our resulting magma to be finite and completely
 defined, and our transformations need not be deterministic.

 Since our goal is simply to find a finite model\,---\,and any candidate can be checked directly\,---\,we are not limited to transformations that can be rigorously justified.
  SAT solvers like
 \emph{Glucose}~\cite{DBLP:conf/ijcai/AudemardS09,DBLP:conf/cp/AudemardS12} inspired by the earlier \emph{MiniSAT}~\cite{DBLP:conf/sat/EenS03},
 via convenient interfaces like \emph{PySAT}~\cite{imms-sat18, itk-sat24},
 other fast SAT solvers such as \emph{Kissat}~\cite{BiereFazekasFleuryHeisinger-SAT-Competition-2020-solvers},
 counterexample finders like \emph{Mace4}~\cite{prover9-mace4}, and more general ATPs like
 \emph{Vampire}~\cite{DBLP:conf/cav/KovacsV13} which can be used as solvers, all succeeded in finding useful magmas following this approach.

 For example, suppose our goal is to show $\Eq{854} \nmodelsfin \E$, for some $\E$.  We start with a magma table which
 satisfies both equations, and remove a random subset of the table entries (creating a partially defined magma).  We then
 ask the ATP to find a magma which satisfies $\Eq{854}$, filling in the unspecified values and potentially adding new elements.
 If in fact $\E$ is not implied by $\Eq{854}$, the ATP might succeed in finding a magma refuting the implication.
 We may also directly insert a violation of $\E$ into the magma cells we have emptied, hoping that a consistent completion still exists.
 Another method, by analogy with slowly introducing forcing terms into numerical integrations while preserving adiabatic invariants,
 is to impose selected implications of a given equation without directly enforcing the equation itself.
 This can gradually drive the magmas in the sequence toward satisfying the equation without immediately imposing it.

Combining several of these techniques allowed us to find proofs of
$$\Eq{854} \nmodelsfin \Eq{413}, \Eq{1045}, \Eq{1055}, \Eq{3316}, \Eq{3925}.$$

As an example of the process, we started with a magma that satisfied $\Eq{854}$ and all the above equations,
but had an $\Eq{10}$ violation.  We suspected that $\Eq{10}$ might be relevant (and later found an
argument showing multiple $\Eq{10}$ violations would be required in a magma which showed $\Eq{854} \nmodelsfin \Eq{1055}$),
and so used \emph{Kissat} and \emph{Vampire} to grow $\Eq{854}$ magmas with such violations.  These larger magmas were then used as
seeds for the random evolution process, and the ATPs eventually succeeded in finding magmas which satisfied $\Eq{854}$ but refuted
the implications in question.

Note that these approaches have their limitations.  To be effective, it must be easy to transition between different
states, which usually involves finding a magma satisfying the equation of interest or at least some related one.
Equations whose finite magmas are difficult to find (e.g.\ $\Eq{677}$), whether because of absolute rarity or the numerical challenges
that our ATPs have in finding them without taking advantage of a structural ansatz, appear resistant to these methods in practice.

Another way to utilize (iii), which proved useful for laws that involved the squaring operator $S$, was to adopt a ``squares first'' approach in which one selected a base model $S\Magma = (SM,\op)$ to serve as the set of squares, then extend it to a larger model $\Magma$ with carrier $M = SM \uplus N$ by first determining what the multiplication map should be on the diagonal $\{ (x,x): x\in N\}$ (i.e., to determine the squaring map $S \colon N \to SM$), together with the values on the blocks $SM \times N$, $N \times SM$, and then finally resolve the remaining values on the $N \times N$ block.  Often, versions of the greedy algorithm are useful for each of these stages of the construction.  The precise details are quite technical, particularly for the law $\x \formaleq (\y \op \y) \op ((\y \op \x) \op \y)$ \eqref{eq1729}, which was the last of the equations whose implications were settled by the ETP.  We refer the reader to the ETP blueprint for further details.

\section{Syntactic arguments}\label{syntactic-sec}

Many proofs or refutations of implications (or equivalences) between two equational laws $\E,\E'$ can be obtained from the syntactic form of the equation.  We discuss some techniques here that were useful in the ETP.

\subsection{Simple rewrites}\label{rewrite-sec}

Many equational laws $\E'$ can be formally deduced from a given law $\E$ by applying the \emph{Lean} \texttt{rw} tactic to rewrite $\E'$ repeatedly by some forward or backward application of $\E$ applied to arguments that match some portion of $\E$.  For instance, the commutative law $\Eq{43}$ clearly implies $\x \op (\y \op \z) \formaleq (\y \op \z) \op \x$ \eqref{eq4531}
by a single such rewrite.  A brute force application of such rewrite methods is already able to directly generate about $\num{15000}$ such implications, including many equivalences to the singleton law $\Eq{2}$ and the constant law $\Eq{46}$.  After applying transitive closure, this generates about four million further such implications.

A simple observation that already generates a reasonable number of equivalences is that any equation of the form $\x \formaleq f(\y,\z,\dots)$ necessarily is equivalent to the trivial law $\x \formaleq \y$, by transitivity; similarly, an equation of the form $f(\x,\y) \formaleq g(\z,\w,\dots)$ implies $f(\x,\y) \formaleq f(\x',\y')$; and so forth.  Equivalences of this form were useful early in the project by cutting down the number of distinct equivalence classes of laws that needed to be studied.

\subsection{Matching invariants}

Fix an alphabet $X$. A \emph{matching invariant} is an assignment $I \colon M_X \to {\mathcal I}$ of an object $I(w) \in {\mathcal I}$ in some space ${\mathcal I}$ to each word $w \in M_X$ with the property that if an equational law $w_1 \formaleq w_2$ has matching invariants $I(w_1)=I(w_2)$, then the same matching $I(w'_1) = I(w'_2)$ holds for any consequence $w'_1 \formaleq w'_2$.  In particular, if one has $I(w_1)=I(w_2)$ and $I(w'_1) \neq I(w'_2)$, then the law $w_1 \formaleq w_2$ does not imply the law $w'_1 \formaleq w'_2$.

A simple example of a matching invariant is the multiplicity $(n_x)_{x \in X}$ of variables of a word: if $w_1,w_2$ have all variables $x$ appear the same number of times $n_x$ in both words, then any rewriting of a word $w$ using the law $w_1 \formaleq w_2$ will preserve this property.  Hence, if $w'_1, w'_2$ do not have that each variable appear the same number of times in both words, then $w_1 \formaleq w_2$ cannot imply $w'_1 \formaleq w'_2$.  For instance, the commutative law $\Eq{43}$ cannot imply the left-absorptive law $\Eq{4}$.

One source of matching invariants comes from the free magma $\Magma_{X,\Gamma}$ of a theory:

\begin{proposition}[Free magmas and matching invariants]\label{free-inv}  Let $\iota_{X,\Gamma} \colon X \to M_{X,\Gamma}$ be the map associated to the free magma $\Magma_{X,\Gamma}$ for a theory $\Gamma$.  Then the map $I \colon M_X \to M_{X,\Gamma}$ defined by $I(w) \coloneqq \varphi_{\iota_{X,\Gamma}}(w)$ is an invariant.
\end{proposition}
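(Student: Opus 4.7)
The plan is to identify $I$ with the quotient map arising from the explicit construction of $\Magma_{X,\Gamma}$ given earlier in the paper, and then reduce the invariance property to transitivity of entailment.

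First, I would unpack the definition of $I$. Recall that $\Magma_{X,\Gamma}$ was constructed as the quotient $\Magma_X / \sim_\Gamma$, where $w \sim_\Gamma w'$ iff $\Gamma \models w \formaleq w'$. Under this construction, the structure map $\iota_{X,\Gamma} \colon X \to M_{X,\Gamma}$ is the composition of the inclusion $X \hookrightarrow M_X$ with the quotient projection $q \colon M_X \to M_X / \sim_\Gamma$. By the universal property of the free magma $\Magma_X$ on $X$, the unique magma homomorphism $\varphi_{\iota_{X,\Gamma}} \colon \Magma_X \to \Magma_{X,\Gamma}$ extending $\iota_{X,\Gamma}$ must coincide with $q$, since $q$ is itself a magma homomorphism extending $\iota_{X,\Gamma}$. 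Therefore $I(w)$ is the $\sim_\Gamma$-equivalence class of $w$, and so
\[
I(w_1) = I(w_2) \iff w_1 \sim_\Gamma w_2 \iff \Gamma \models w_1 \formaleq w_2.
\]

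Second, I would verify the matching invariant property directly from this equivalence. Suppose $w_1, w_2 \in M_X$ satisfy $I(w_1) = I(w_2)$, which by the above means $\Gamma \models w_1 \formaleq w_2$. Let $w'_1 \formaleq w'_2$ be any consequence of the law $w_1 \formaleq w_2$, i.e., $\{w_1 \formaleq w_2\} \models w'_1 \formaleq w'_2$. Then any magma $\Magma$ satisfying $\Gamma$ satisfies $w_1 \formaleq w_2$, and hence satisfies $w'_1 \formaleq w'_2$; in other words, $\Gamma \models w'_1 \formaleq w'_2$. Applying the equivalence again yields $I(w'_1) = I(w'_2)$, as required.

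The only point that needs to be argued carefully is the identification of $\varphi_{\iota_{X,\Gamma}}$ with the canonical quotient map $q$, which rests on the universal property of $\Magma_X$ together with the fact that $q$ factors through $\iota_{X,\Gamma}$ on generators. Once that identification is in hand, the invariance property reduces to transitivity of entailment, and there is no substantive obstacle.
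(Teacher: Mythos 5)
Your proof is correct, but it is organized around a different pivot than the paper's. You identify $\varphi_{\iota_{X,\Gamma}}$ with the quotient projection $q \colon \Magma_X \to \Magma_X/\!\sim_\Gamma$ of the canonical construction, so that $I(w_1)=I(w_2)$ becomes literally the statement $\Gamma \models w_1 \formaleq w_2$, and the invariance property then collapses to transitivity of entailment. The paper instead argues construction-independently from the universal property alone: for every $f \colon X \to M_{X,\Gamma}$ the two homomorphisms $\varphi_f$ and $\varphi_{f,\Gamma}\circ\varphi_{\iota_{X,\Gamma}}$ extend $f$ and hence coincide, so $I(w_1)=I(w_2)$ forces the single magma $\Magma_{X,\Gamma}$ to satisfy the \emph{law} $w_1 \formaleq w_2$ (all substitution instances, not just the tautological one); the entailment hypothesis then gives that $\Magma_{X,\Gamma}$ satisfies $w'_1 \formaleq w'_2$, and specializing the assignment to $\iota_{X,\Gamma}$ yields $I(w'_1)=I(w'_2)$. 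What your route buys is brevity and a clean characterization of the fibers of $I$; what it costs is a dependence on the explicit quotient model, in particular on the cited fact that $\sim_\Gamma$ (defined via $\Gamma$-entailment) is a congruence whose quotient is a model of $\Gamma$ with the right universal property --- this is exactly where the content of the paper's commutative-diagram step is hidden in your version. Since the paper explicitly offers $\Magma_X/\!\sim_\Gamma$ as the canonical construction, you are entitled to use it, and your identification of $\varphi_{\iota_{X,\Gamma}}$ with $q$ via uniqueness of extensions is argued correctly; there is no gap.
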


\begin{proof}  Suppose that $w_1 \formaleq w_2$ entails $w'_1 \formaleq w'_2$, and that $I(w_1) = I(w_2)$.  For any $f \colon X \to \Magma_{X,\Gamma}$, the two maps $\varphi_f, \varphi_{f,\Gamma} \circ \varphi_{\iota_{X,\Gamma}} \colon \Magma_X \to \Magma_{X,\Gamma}$ are both homomorphisms that extend $f$, hence agree by the universal property of $\Magma_X$, as displayed by the following commutative diagram:
\[\begin{tikzcd}
	&& X \\
	\\
	{\Magma_X} && {\Magma_{X,\Gamma}} && {\Magma_{X,\Gamma}}
	\arrow[hook, from=1-3, to=3-1]
	\arrow["{\iota_{X,\Gamma}}", pos=0.7, left, from=1-3, to=3-3]
	\arrow["f", pos=0.7, above, from=1-3, to=3-5]
	\arrow["{I = \varphi_{\iota_{X,\Gamma}}}", pos=0.6, above, from=3-1, to=3-3]
	\arrow["{\varphi_f}"', curve={height=18pt}, pos=0.6, below, from=3-1, to=3-5]
	\arrow["{\varphi_{f,\Gamma}}", pos=0.5, above, from=3-3, to=3-5]
\end{tikzcd}\]
In particular, the hypothesis $I(w_1)=I(w_2)$ implies that $\varphi_f(w_1) = \varphi_f(w_2)$ for all $f \colon X \to M_{X,\Gamma}$; that is to say, the magma $\Magma_{X,\Gamma}$ satisfies the law $w_1 \formaleq w_2$, and hence also $w'_1 \formaleq w'_2$ by hypothesis.  Thus $\varphi_{\iota_{X,\Gamma}}(w'_1) = \varphi_{\iota_{X,\Gamma}}(w'_2)$, which gives $I(w'_1) = I(w'_2)$ as required.
\end{proof}

\begin{example}  If we take $\Gamma = \{\Eq{4}\}$ to be the theory of the left-absorptive law $\Eq{4}$ as described in \Cref{left-absorb}, then the matching invariant $I(w)$ produced by \Cref{free-inv} is the leftmost letter of the alphabet $X$ appearing in the word; for instance $I((\x \op \y) \op \z) = \x$.  Thus, for example, the left-absorptive law $\Eq{4}$ cannot imply the right-absorptive law $\Eq{5}$.
\end{example}

\begin{example}  If we take $\Gamma = \{\Eq{43}, \Eq{4512}\}$ to be the theory of the commutative law $\Eq{43}$ and the associative law $\Eq{4512}$, then by \Cref{semi-group}, the associated invariant $I(w) = \sum_{x \in X} n_x e_x$ is the formal sum of all the generators $e_x$ appearing in the word $w$, in the free abelian semigroup generated by those generators.  This recovers the preceding observation that the multiplicities $(n_x)_{x \in X}$ form a matching invariant.
\end{example}

\begin{example}  Let $n \geq 1$ be a positive integer, and consider the theory $\Gamma = \{\Eq{43}, \Eq{4512}, \E_n\}$ consisting of the previous theory $\{\Eq{43}, \Eq{4512}\}$ together with the order-$n$ law $L_x^n y = y$.  One can check that the free magma $\Magma_{X,\Gamma}$ can be described as the free abelian group of exponent $n$ with generators $e_x, x \in X$, with associated map $\iota_{X,\Gamma} \colon x \mapsto e_x$.  The associated matching invariant $I(w) = \sum_{x \in X} n_x e_x$ is essentially the multiplicities $(n_x \hbox{ mod } n)_{x \in X}$ modulo $n$, which gives a slightly stronger criterion than the preceding matching invariant for refuting implications.  For example, the cubic idempotent law $\x \formaleq (\x \op \x) \op \x$ \eqref{eq23}
has matching invariants $e_x = 3e_x$ in the $n=2$ case, and hence does not imply the idempotent law $\x \formaleq \x \op \x$ \eqref{eq3} since $e_x \neq 2e_x$ in the $n=2$ case.
\end{example}

In practice, we found that these invariants could be used to establish a significant fraction of the non-implications in the implication graph, although in most cases these non-implications could also be established by other means, for instance through consideration of small finite counterexamples, especially small models of~$\Gamma$.

\begin{remark}  One can also obtain matching invariants from the free objects associated to theories that involve additional operations beyond the magma operation $\op$, such as an identity element or an inverse operation.  We leave the precise generalization of \Cref{free-inv} to such theories to the interested reader.
\end{remark}

\subsection{Canonization}\label{canon-sec}

One possible way of obtaining refutations of a given implication $\E \models \E'$ between equational laws is by building a special kind of syntactic model, via certain involutions on elements of the free magma $\Magma_X$ we call \emph{canonizers}.

\begin{definition}
  A function $C\colon M_X\rightarrow M_X$ is a \emph{canonizer} for an equation $\E$ if
  \setlength{\parskip}{0pt}
  \begin{enumerate}
    \item $C$ is computable.
    \item if $w_1\sim_\E w_2$, then $C(w_1) = C(w_2)$.
  \end{enumerate}
\end{definition}

In fact, a canonizer is simply a matching invariant with target in $M_X$.

We describe a concrete strategy for building such $C$s, which will require a number of definitions.

\begin{definition}\label{def:canon}
Let $R \colon M_X \to M_X$ be an arbitrary function.
\setlength{\parskip}{0pt}
\begin{itemize}
\item  We say that $R$ is \emph{weakly collapsing} if for every word $w$, $R(w)$ is a (not necessarily proper) sub-word of $w$.
\item  A function $\theta\colon X\rightarrow M_X$ is called a \emph{substitution}, and we can extend $\theta$ to an endomorphism~$\varphi_\theta \colon \Magma_X\rightarrow\Magma_X$ in the usual way. We write $\varphi_\theta w$ instead of $\varphi_\theta(w)$ for application of substitutions.
\item  If $\E$ is the equation $l\simeq r$, and $l$ is not a variable, we say that $R$ is a \emph{weak canonizer} if for any substitution $\theta$, $R(\varphi_\theta l)=\varphi_\theta r$.
\item  Finally we say that $R$ is \emph{non-overlapping} for $\E$ if for every word $w\in M_X$ which is a strict sub-word of $l$ that is not in $X$, and any substitution $\theta$, $R(\varphi_\theta w) = \varphi_\theta w$.
\end{itemize}
  We can then define $C_R\colon M_X\rightarrow M_X$ as follows:
\begin{align}
  C_R(\x) &\coloneqq \x\\
  C_R(w\op w') &\coloneqq R(C_R(w)\op C_R(w'))
\end{align}
\end{definition}

The following lemma is readily proven by induction on the structure of terms.

\begin{lemma}\label{lem:subsubst}
  If $t$ is a strict sub-word of $l$, and $R$ is non-overlapping, then $C_R(\varphi_\theta t) = \varphi_{C_R\circ \theta} t$.
\end{lemma}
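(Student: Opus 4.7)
The plan is to prove this by structural induction on the strict sub-word $t$ of $l$, using the two key facts that (a) sub-words of a strict sub-word of $l$ are again strict sub-words of $l$, and (b) the non-overlapping property lets us discard an $R$-application at any compound strict sub-word.

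For the base case, suppose $t = \x$ for some variable $\x \in X$. Then $\varphi_\theta t = \theta(\x)$, so by the first defining clause of $C_R$,
\[
  C_R(\varphi_\theta t) = C_R(\theta(\x)) = (C_R \circ \theta)(\x) = \varphi_{C_R \circ \theta}\, t,
\]
as desired.

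For the inductive step, assume $t = t_1 \op t_2$ is a strict sub-word of $l$. Then both $t_1$ and $t_2$ are proper sub-words of $t$, and hence strict sub-words of $l$, so the inductive hypothesis applies to each. Unfolding the definition of $\varphi_\theta$ on a compound word and then the recursive clause of $C_R$ gives
\[
  C_R(\varphi_\theta t) \;=\; C_R\bigl(\varphi_\theta t_1 \op \varphi_\theta t_2\bigr) \;=\; R\bigl(C_R(\varphi_\theta t_1) \op C_R(\varphi_\theta t_2)\bigr).
\]
Applying the inductive hypothesis to each of $t_1$ and $t_2$, this rewrites as $R\bigl(\varphi_{C_R \circ \theta} t_1 \op \varphi_{C_R \circ \theta} t_2\bigr) = R(\varphi_{C_R \circ \theta}\, t)$. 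Now $t$ is a compound term (hence not in $X$) and a strict sub-word of $l$, so the non-overlapping hypothesis on $R$ applied to the substitution $C_R \circ \theta$ yields $R(\varphi_{C_R \circ \theta}\, t) = \varphi_{C_R \circ \theta}\, t$, completing the induction.

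The only step that requires care, rather than just unfolding definitions, is verifying that the induction hypothesis is actually available at $t_1$ and $t_2$: this is where one must observe that the property ``strict sub-word of $l$'' is inherited by immediate sub-words, which is immediate but worth spelling out so that the non-overlapping clause can legitimately be invoked at $t$ itself. No other obstacles arise; in particular, the hypothesis that $l$ is not a variable (from \Cref{def:canon}) is used only implicitly, via the existence of strict compound sub-words on which non-overlapping is non-trivial.
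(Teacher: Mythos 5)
Your proof is correct and is exactly the structural induction the paper has in mind; the paper itself only remarks that the lemma ``is readily proven by induction on the structure of terms'' without writing out the details. Your base case, the use of the inductive hypothesis on $t_1$ and $t_2$ (justified by sub-words of strict sub-words of $l$ being strict sub-words of $l$), and the final appeal to non-overlapping with the substitution $C_R \circ \theta$ are all exactly what that sketch requires.
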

We now have the following theorem.
\begin{theorem}\label{thm:canon}
  Whenever $R$ is weakly collapsing, a weak canonizer, and non-overlapping, then $C_R$ is a canonizer.
\end{theorem}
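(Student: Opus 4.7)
The plan is to introduce the equivalence relation $\approx$ on $M_X$ by $w_1 \approx w_2 \iff C_R(w_1) = C_R(w_2)$, and to show $\sim_\E \subseteq \approx$. The defining recursion $C_R(w \op w') = R(C_R(w) \op C_R(w'))$ immediately makes $\approx$ a congruence with respect to $\op$: if $C_R(w_i) = C_R(w'_i)$ for $i = 1,2$, then $C_R(w_1 \op w_2) = C_R(w'_1 \op w'_2)$. Writing $\E$ as $l \simeq r$, the relation $\sim_\E$ is, by the Birkhoff completeness theorem cited earlier, the smallest congruence on $\Magma_X$ containing the substitution-instance pairs $(\varphi_\theta l, \varphi_\theta r)$, so it suffices to prove $C_R(\varphi_\theta l) = C_R(\varphi_\theta r)$ for every substitution $\theta$. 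Computability of $C_R$ follows by structural induction from computability of $R$, so from here the content is the single identity above.

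For the left-hand side, since $l$ is not a variable we may write $l = l_1 \op l_2$, giving $C_R(\varphi_\theta l) = R(C_R(\varphi_\theta l_1) \op C_R(\varphi_\theta l_2))$. Each $l_i$ is a strict sub-word of $l$, so Lemma~\ref{lem:subsubst} yields $C_R(\varphi_\theta l_i) = \varphi_{C_R \circ \theta} l_i$; substituting and then applying the weak-canonizer hypothesis at the substitution $C_R \circ \theta$ collapses this to $C_R(\varphi_\theta l) = R(\varphi_{C_R \circ \theta} l) = \varphi_{C_R \circ \theta} r$.

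For the right-hand side, I would combine weakly collapsing with the weak-canonizer identity instantiated at $\theta = \mathrm{id}$: together these give $r = R(l)$ and force $r$ to be a sub-word of $l$. The case $r = l$ is trivial; otherwise $r$ is a \emph{strict} sub-word of $l$, and a second application of Lemma~\ref{lem:subsubst} yields $C_R(\varphi_\theta r) = \varphi_{C_R \circ \theta} r$, matching the expression from the previous paragraph and closing the proof. The main obstacle is precisely this handling of $r$: the non-overlapping hypothesis constrains $R$ only on substitution instances of sub-words of $l$, so at first sight there is nothing with which to simplify $C_R(\varphi_\theta r)$. The role of weakly collapsing is exactly to drag $r$ itself into the domain of Lemma~\ref{lem:subsubst} by realising it as a sub-word of $l$; this is the conceptual step that makes the three hypotheses of the theorem fit together, and everything else is routine structural induction and substitution bookkeeping.
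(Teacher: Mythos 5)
Your proof is correct and follows essentially the same route as the paper's: both reduce to verifying $C_R(\varphi_\theta l) = C_R(\varphi_\theta r)$ for all substitutions $\theta$ (you via the smallest-congruence formulation, the paper via induction over the derivation, which amounts to the same thing), and both close that identity by the same three steps — Lemma~\ref{lem:subsubst} on the strict sub-words of $l$, the weak-canonizer property at $C_R \circ \theta$, and a second application of the Lemma to $r$ after using weakly collapsing plus weak canonicity to realise $r$ as a strict sub-word of $l$. No gaps.
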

\begin{proof}
  Assume $w\sim_{\E}w'$. We proceed by induction over the proof of equality.

  The only nontrivial case is $w = \varphi_\theta l$ and $w'=\varphi_\theta r$ for some substitution $\theta$.  The case $l=r$ is clear, so we may assume that $l \neq r$.
  Then we have
  \begin{align*}
    C_R(\varphi_\theta l) &= R(\varphi_{C_R\circ\theta}l) \\
                 &= \varphi_{C_R\circ\theta}r\\
                 &= C_R(\varphi_\theta r)
  \end{align*}
  where the first and third equalities follow from the Lemma, observing that $r$ is a strict sub-term of $l$ (since $R$ is weakly collapsing and a weak canonizer), and the second from weak canonicity.
\end{proof}

We mention an example to show why this is a useful theorem. Take $\E$ to be the equation
\[
  \y \op (\x \op (\y \op (\y \op \y)))\formaleq \x.
\]
We can take $R$ to be the transformation which sends a term of the form $w \op (v \op (w \op (w \op w)))$ to $v$ for any two words $v, w$, and leaves all other words unchanged.
It is then somewhat easy to show that this transformation satisfies the conditions of \Cref{thm:canon}, and so we have a canonizer $C_R$.
This can be used, e.g.\@ to refute the implication of $\x \formaleq (\x \op \x) \op (\x \op (\x \op \x))$ from $\E$.

As a conclusion for this section, we note that a very general strategy for building canonizers comes from the theory of \emph{rewrite systems}, see e.g.\@ Baader \& Nipkow \cite{term-rewriting}.
In that setting one defines rewriting as a transformation on words (or terms), and if this transformation is \emph{terminating} and \emph{confluent} (intuitively, rewrites cannot go on forever, or diverge forever), then one may simply pick the transformation which sends a term to its normal form as a canonizer.

Though we note that the non-overlapping criterion seems very similar to the notion of orthogonality in rewriting, we leave the investigation of the precise relationship of the classical theory with the above technique as future work.

\subsection{Unique factorization}

In general, the free magma $\Magma_{X,\E}$ for a given equational law $\E$, which we can canonically define as $\Magma_X/\!\sim_\E$, is hard to describe explicitly; indeed, from the undecidability of implications between equational laws, such a magma cannot be computably described for arbitrary $\E$.  Nevertheless, for some laws it is possible to obtain some partial understanding of $\Magma_{X,\E}$ from a syntactic perspective.  For instance, if we can refute the equivalence $w'_1 \sim_\E w'_2$ by constructing a counterexample magma $\Magma$ that satisfies $\E$ but not $w'_1 \formaleq w'_2$, then this implies that the representatives $\iota_{X,\E}(w'_1), \iota_{X,\E}(w'_2)$ of  $w'_1, w'_2$ in $\Magma_{X,\E}$ are distinct.

We illustrate this approach with equations $\E$ of the left-absorptive form
\begin{equation}\label{left-absorptive}
\x \formaleq \x \op f(\x,\y,\z)
\end{equation}
for some word $f(\x,\y,\z)$, that are also known to imply the right-idempotent law $\Eq{378}$.  An illustrative example is the law $\Eq{854}$ depicted in \Cref{fig:854}. Other examples are listed in \Cref{fig:854-like}.

\begin{figure}
  \centering
  \includegraphics[width=0.85\textwidth]{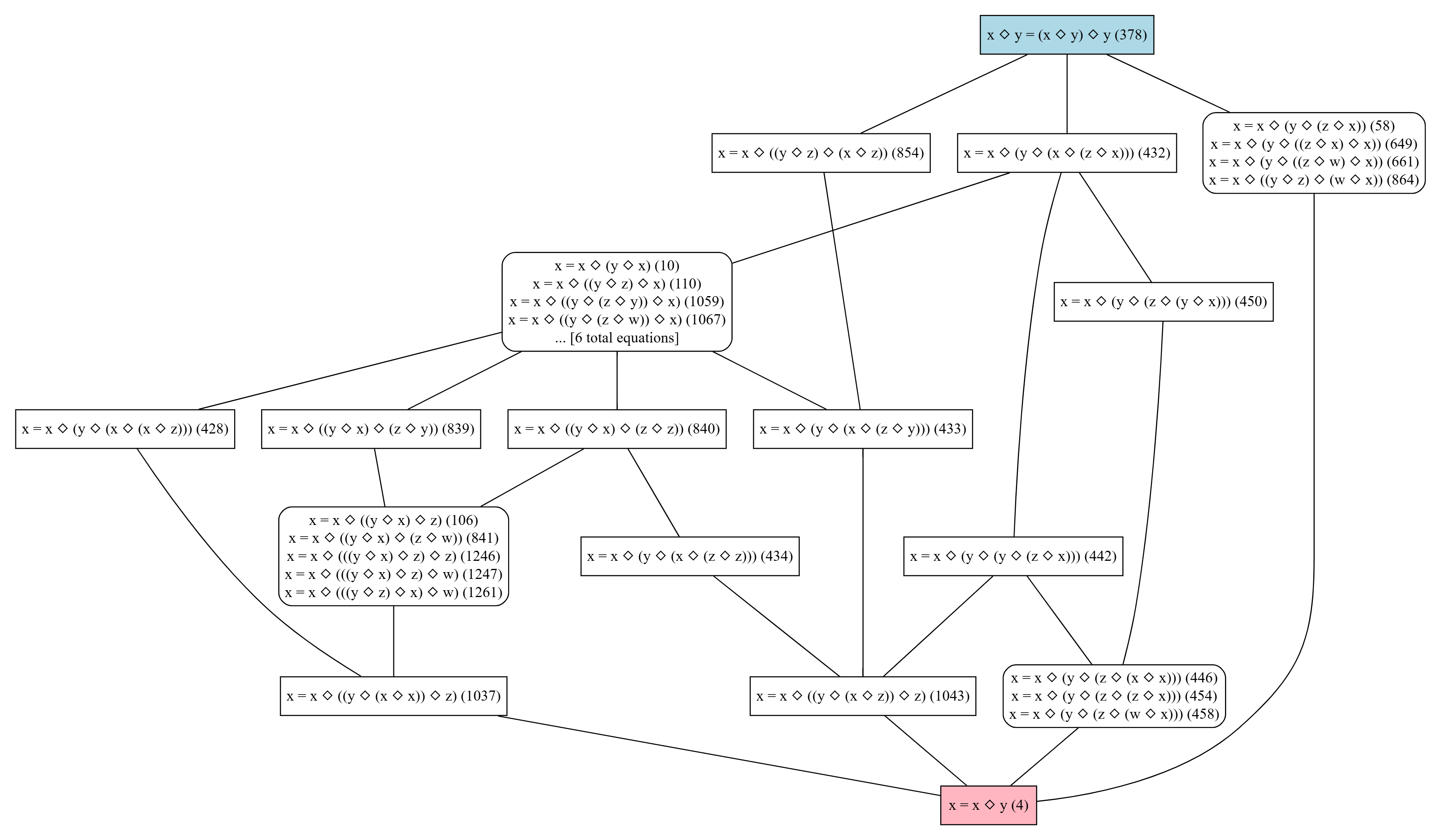}
  \caption{Equations similar to $\Eq{854}$ that are of the form \eqref{left-absorptive} (possibly involving a fourth indeterminate $\w$) and imply $\Eq{378}$.  For brevity, $70$ equations equivalent to $\Eq{4}$ have been omitted.}
  \label{fig:854-like}
  \end{figure}

\begin{lemma}\label{854} Equation $\Eq{854}$ is of the form \eqref{left-absorptive} and implies $\Eq{378}$.
\end{lemma}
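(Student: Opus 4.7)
The statement has two parts. The first is immediate by inspection: the equation
$$\x \formaleq \x \op ((\y \op \z) \op (\x \op \z))$$
matches the schema \eqref{left-absorptive} with $f(\x,\y,\z) \coloneqq (\y \op \z) \op (\x \op \z)$, so $\Eq{854}$ is indeed of left-absorptive form. No work is needed here beyond naming $f$.

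For the second part, the plan is to derive $\Eq{378}$ from $\Eq{854}$ by a short sequence of substitutions. The natural starting point is to exploit the symmetry between the two halves of the right-hand side of $\Eq{854}$. Setting $\y \mapsto \x$ collapses $\y \op \z$ to $\x \op \z$ and yields
$$\x \formaleq \x \op ((\x \op \z) \op (\x \op \z)),$$
which says that $S(\x \op \z) = (\x \op \z) \op (\x \op \z)$ is absorbed on the left by $\x$ for every $\z$. Likewise, setting $\y \mapsto \z$ gives $\x \formaleq \x \op ((\z \op \z) \op (\x \op \z))$, providing a second family of absorbable terms. I would then apply $\Eq{854}$ a second time, substituting the outer $\x$ by an appropriate composite term and choosing the inner indeterminates so that the resulting subterm reduces (via one of the specializations above) to the right-hand side of $\Eq{378}$.

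The main obstacle is the combinatorial search for the precise substitution pattern: although every individual step is a mechanical rewrite, it is not immediately obvious which sequence of substitutions lines up the intermediate terms on both sides of $\Eq{378}$. This is the regime in which an ATP (as discussed in \Cref{automated-sec}) is especially effective at finding a rewriting skeleton, after which the formal Lean proof reduces to a short chain of \texttt{rw} invocations of the kind described in \Cref{rewrite-sec}. Since $\Eq{378}$ appears as one of the laws implied by $\Eq{854}$ in \Cref{fig:854}, and since both laws are of low order, we expect the resulting derivation to consist of only a handful of substitutions, placing it squarely among the simpler consequences of $\Eq{854}$.
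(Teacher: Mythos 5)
Your first part is fine, and your opening specialization ($\y \mapsto \x$, giving $x = x \op S(x \op z)$ for all $z$, where $Sw = w \op w$) is genuinely a step on the correct path. But the second half of the proposal has a real gap: you never actually derive $\Eq{378}$. You describe a plan --- ``apply $\Eq{854}$ a second time, substituting the outer $\x$ by an appropriate composite term'' --- and then explicitly defer the search for that substitution to an ATP. Finding that chain of substitutions \emph{is} the content of the lemma; until the composite terms are exhibited and checked, nothing has been proved. A claim that the derivation ``should consist of only a handful of substitutions'' because $\Eq{378}$ appears in \Cref{fig:854} is circular in this context, since the figure records the very implication being established.

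For the record, the missing chain is short. Write $f(\x,\y,\z) = (\y\op\z)\op(\x\op\z)$, so the law reads $x = x \op f(x,y,z)$. Since $f(x,x,x) = Sx \op Sx = S^2x$, the law gives $x \op S^2x = x$; feeding $z = S^2x$ into your specialization $x = x \op S(x\op z)$ then yields $x = x \op Sx$. Next, the law with arguments $(y,x,y)$ gives $y = y \op ((x\op y)\op Sy)$, and replacing the leftmost $y$ by $y \op Sy$ (legitimate by the identity just proved) rewrites the right-hand side as $(y\op Sy)\op((x\op y)\op Sy) = f(x\op y,\, y,\, Sy)$, i.e.\ $y = f(x\op y, y, Sy)$. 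One final application of the law with outer variable $x \op y$ gives $x\op y = (x\op y)\op f(x\op y,y,Sy) = (x\op y)\op y$, which is $\Eq{378}$. Note that this route uses the instance $f(y,x,y)$ (third argument equal to the first), not the $\y\mapsto\z$ specialization you propose as your ``second family,'' which does not appear to be needed.
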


\begin{proof}  Clearly we have \eqref{left-absorptive} with $f(\x,\y,\z) \coloneqq (\y \op \z) \op (\x \op \z)$.  From \eqref{left-absorptive} we have in any magma satisfying $\Eq{854}$ that
$$x = x \op f(x,x,S^2 x) = x \op S(x \op S^2 x) = x \op S(x \op f(x,x,x)) = x \op Sx.$$
This implies from a further application of \eqref{left-absorptive} that
$$ y = y \op f(y,x,y) = (y \op Sy) \op ((x \op y) \op Sy) = f(x \op y, y, Sy)$$
and hence by \eqref{left-absorptive} again
$$ (x \op y) \op y = x \op y$$
giving $\Eq{378}$.
\end{proof}

Let $\E$ be a law of the form \eqref{left-absorptive} that implies $\Eq{378}$. We define a directed graph $\to_\E$ on words in $\Magma_X$ by declaring $w' \to_\E w$ if $w \sim_\E w'' \op w'$ for some $w' \in M_X$.  By $\Eq{378}$ (applied to the quotient magma $\Magma_{X,\E} = \Magma_X/\sim_\E$), this is equivalent to requiring that $w \sim_\E w \op w'$. In particular, from \eqref{left-absorptive} we have $f(x,y,z) \to x$ for all $x,y,z$.  Furthermore, the relation $\to_\E$ factors through $\sim_\E$: if $w \sim_\E \tilde w$ and $w' \sim_\E \tilde w'$, then $w' \to_\E w$ if and only if $\tilde w' \to_\E \tilde w$.

Call a word $w \in M_X$ \emph{irreducible} if it is not of the form $w = w_1 \op w_2$ with $w_2 \to_\E w_1$.  We can partially understand the equivalence relation $\sim_\E$ on irreducible words:

\begin{theorem}[Description of equivalence]\label{irred-desc}  Let $\E$ be an equation of the form \eqref{left-absorptive}.  Let $w$ be an irreducible word, and let $w'$ be a word with $w \sim_\E w'$.
  \setlength{\parskip}{0pt}
  \begin{itemize}
    \item[(i)] If $w$ is a product $w = w_1 \op w_2$, then $w'$ takes the form
$$ w' = ({}\dots((w'_1 \op w'_2) \op v_1) \op \dots{}) \op v_n$$
for some $w'_1 \sim_\E w_1$, $w'_2 \sim_\E w_2$, some $n \geq 0$, and some words $v_1, \dots, v_n$ such that for all $0 \leq i < n$, $v_{i+1}$ is of the form
$$ v_{i+1} \sim_\E f(x_i,y_i,z_i)$$
for some $x_i, y_i, z_i$ with
$$ x_i \sim_\E ({}\dots((w'_1 \op w'_2) \op v_1) \op \dots{}) \op v_i.$$
In particular, $v_{i+1} \to_\E x_i$.
  \item[(ii)] Similarly, if $w \in X$ is a generator of $\Magma_X$, then $w'$ takes the form
$$ w' = ({}\dots(w \op v_1) \op \dots {}) \op v_n$$
for some $n \geq 0$, and some words $v_1, \dots, v_n$ such that for all $0 \leq i < n$, $v_{i+1}$ is of the form
$$ v_{i+1} \sim_\E f(x_i,y_i,z_i)$$
for some $x_i, y_i, z_i$ with
$$ x_i \sim_\E ({}\dots(w \op v_1) \op \dots{}) \op v_i.$$
In particular, $v_{i+1} \to_\E x_i$.
\end{itemize}
Conversely, any word of the above forms is equivalent to $w$.
\end{theorem}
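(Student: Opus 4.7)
The plan is to show that the $\sim_\E$-equivalence class of $w$ is exactly the set $\mathcal{F}(w)$ of words fitting the schema described in case~(i) or case~(ii), by proving the two inclusions separately. For the converse direction---any word in $\mathcal{F}(w)$ is $\sim_\E w$---irreducibility is not needed, and I would argue by induction on $n$. The base case $n=0$ is immediate by congruence (or tautological in case~(ii)). For the inductive step, write the given word as $u \op v_n$, where $u$ is of the same form with $n-1$ trailing entries; by induction $u \sim_\E w$, and the hypothesis $v_n \sim_\E f(x_{n-1},y_{n-1},z_{n-1})$ with $x_{n-1}\sim_\E u$ yields
\[
u \op v_n \;\sim_\E\; u \op f(u,y_{n-1},z_{n-1}) \;\sim_\E\; u \;\sim_\E\; w
\]
via congruence and a single application of $\E$.

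For the forward direction, observe that $w \in \mathcal{F}(w)$ (take $n=0$), so it suffices to show that $\mathcal{F}(w)$ is closed under the single-step rewrites generating $\sim_\E$: replacing a subterm $t$ by $t \op f(t,a,b)$, or conversely replacing a subterm of the form $t \op f(t,a,b)$ by $t$. Given $u = (\dots((w'_1\op w'_2)\op v_1)\dots)\op v_n \in \mathcal{F}(w)$, every subterm falls into one of three positional types: (A) strictly inside one of $w'_1, w'_2, v_i$; (B) equal to $w'_1$, $w'_2$, or some $v_i$ as a whole; or (C) equal to a partial product $u_k \coloneqq (\dots((w'_1\op w'_2)\op v_1)\dots)\op v_k$ for some $0 \le k \le n$ (with the obvious modification in case~(ii)). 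Rewrites of types (A) and (B) merely replace a distinguished piece by a $\sim_\E$-equivalent word, leaving the outer schema intact. A forward type-(C) rewrite at $u_k$ inserts $f(u_k,a,b)$ as a new $v$-entry just after $v_k$; the new constraint is tautological, and the constraints on subsequent $v_j$ survive because $u_k \op f(u_k,a,b) \sim_\E u_k$ by $\E$, so successive partial products change only up to $\sim_\E$. A backward type-(C) rewrite at $u_k$ with $k \ge 1$ forces $v_k = f(u_{k-1},s,s')$ literally, and the rewrite amounts to deleting this entry; the same equivalence $u_{k-1}\op v_k \sim_\E u_{k-1}$ keeps all downstream conditions intact.

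The critical and only nontrivial use of irreducibility is a backward rewrite at the root $u_0 = w'_1 \op w'_2$: it would require $w'_2 = f(w'_1,s,s')$ syntactically, whence $w_2 \sim_\E f(w_1,s,s')$ by congruence, and therefore $w = w_1 \op w_2 \sim_\E w_1 \op f(w_1,s,s') \sim_\E w_1$, exhibiting $w_2 \to_\E w_1$ and contradicting the irreducibility of $w$. In case~(ii) with $w \in X$, no backward rewrite is possible at the bare variable, and a forward rewrite at $w$ simply re-parses as a word in $\mathcal{F}(w)$ with one extra innermost entry $f(w,a,b)$. The main obstacle will be the accounting: across each insertion, deletion, or replacement, one must check that every condition $v_{i+1} \sim_\E f(x_i,y_i,z_i)$ with $x_i \sim_\E$ the appropriate partial product continues to hold with a possibly shifted choice of witnesses. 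The combinatorics are tedious but shallow; the real idea is that every rewrite is either absorbed into the equivalences $w'_i \sim_\E w_i$, inserts or deletes a $v$-entry, or is forbidden at the root by irreducibility.
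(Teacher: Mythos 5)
Your proposal is correct and follows essentially the same route as the paper's proof: the converse direction by direct applications of \eqref{left-absorptive} modulo $\sim_\E$, and the forward direction by showing (via Birkhoff completeness) that the class of words of the stated form is closed under single-step rewrites, with the same three-way case analysis (rewrites inside $w'_1,w'_2,v_i$; insertion of a new $v$-entry; deletion of a $v$-entry) and the same appeal to irreducibility to forbid the collapsing rewrite at the root $w'_1 \op w'_2$. Your treatment is if anything slightly more explicit than the paper's about the syntactic matching condition $v_k = f(u_{k-1},s,s')$ required for a deletion step.
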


\begin{proof}  We just verify claim (i), as claim (ii) is similar.  The converse direction is clear from \eqref{left-absorptive} (after quotienting by $\sim_\E$), so it suffices to prove the forward claim. By the Birkhoff completeness theorem, it suffices to prove that the class of words described by (i) is preserved by any term rewriting operation, in which a term in the word is replaced by an equivalent term using \eqref{left-absorptive}.  Clearly the term being rewritten is in $w'_1$ or $w'_2$ then the form of the word is preserved, and similarly if the term being rewritten is in one of the $v_i$.  The only remaining case is if we are rewriting a term of the form
$$ x_i = ({}\dots((w'_1 \op w'_2) \op v_1) \op \dots{}) \op v_i.$$
If $i>0$ we can rewrite this term down to $x_{i-1}$, and this still preserves the required form (decrementing $n$ by one).  If $i=0$ then we cannot perform such a rewriting because of the irreducibility of $w_1 \op w_2$ and hence $w'_1 \op w'_2$.  Finally, we can rewrite $x_i$ to $x_i \op v$ where $v$ is of the form
$$ v_i = f(x_i,y,z),$$
and after some relabeling we are again of the required form (now incrementing $n$ by one). This covers all possible term rewriting operations, giving the claim.
\end{proof}

Specializing to the case where $w,w'$ are both irreducible, we conclude

\begin{corollary}[Unique factorization]\label{unique-factorization}  Two irreducible words $w, w'$ are equivalent if and only if they are either the same generator of $X$, or are of the form $w = w_1 \op w_2$, $w' = w'_1 \op w'_2$ with $w_1 \sim_\E w'_1$ and $w_2 \sim_\E w'_2$.
\end{corollary}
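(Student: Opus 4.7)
The plan is to deduce the corollary essentially as a direct specialization of \Cref{irred-desc}, with irreducibility of $w'$ forcing the number of right-multiplications $n$ in the normal-form description to vanish.

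The backward direction is immediate and requires no use of \Cref{irred-desc}: if $w=w'$ is a single generator, then trivially $w\sim_\E w'$, while if $w=w_1\op w_2$ and $w'=w_1'\op w_2'$ with $w_i\sim_\E w_i'$, then the congruence property of $\sim_\E$ (which we get from the fact that it is the equational closure of a magma law) gives $w\sim_\E w'$.

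For the forward direction, I would split into the two cases of \Cref{irred-desc}. Suppose first that $w\in X$ is a generator and $w\sim_\E w'$. By part (ii) of the theorem, $w'$ can be written as
\[
w' = ({}\dots(w\op v_1)\op\dots{})\op v_n
\]
for some $n\ge 0$ with each $v_{i+1}\to_\E x_i$, where $x_i \sim_\E ({}\dots(w\op v_1)\op\dots{})\op v_i$ is the left factor accumulated so far. If $n\ge 1$, then $w' = x_{n-1}\op v_n$ with $v_n\to_\E x_{n-1}$, and this exhibits $w'$ as a product $w_1'\op w_2'$ with $w_2'\to_\E w_1'$, contradicting the irreducibility of $w'$. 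Hence $n=0$, so $w'=w$, which is the first alternative of the conclusion. The case $w=w_1\op w_2$ is analogous: applying part (i) we obtain
\[
w' = ({}\dots((w_1'\op w_2')\op v_1)\op\dots{})\op v_n
\]
with $w_i'\sim_\E w_i$ and, for $n\ge 1$, $v_n\to_\E x_{n-1}$ with $x_{n-1}$ the accumulated left factor. Again irreducibility of $w'$ forces $n=0$, leaving $w'=w_1'\op w_2'$ with $w_1'\sim_\E w_1$ and $w_2'\sim_\E w_2$, which is the second alternative.

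There is essentially no main obstacle here beyond correctly invoking \Cref{irred-desc}; the one small subtlety to make sure of is that \emph{any} expression of the form $u \op v$ with $v\to_\E u$ really does violate irreducibility (which is exactly the definition of irreducibility), so the collapse of the $v_i$-chain is legitimate. All of the substantive work has been done in the proof of \Cref{irred-desc}.
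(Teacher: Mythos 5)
Your proof is correct and is exactly the specialization the paper intends: the paper gives no separate argument for the corollary beyond the words ``specializing to the case where $w,w'$ are both irreducible,'' and your collapse of the $v_i$-chain via irreducibility of $w'$ is the right way to carry that out. The only micro-point worth noting is that \Cref{irred-desc} gives $v_n \to_\E x_{n-1}$ with $x_{n-1}$ only $\sim_\E$-equivalent to the literal accumulated left factor, so one should invoke the paper's observation that $\to_\E$ factors through $\sim_\E$ to conclude that $w'$ itself is reducible when $n\ge 1$.
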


As an application of this corollary, we establish

\begin{proposition}[$\Eq{854} \nmodels \Eq{3316}$]\label{854-3316} Equation $\Eq{854}$ does not imply $\Eq{3316}$.
\end{proposition}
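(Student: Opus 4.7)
The plan is to use the unique factorization machinery developed in \Cref{unique-factorization}, applied to the free magma $\Magma_{X,\Eq{854}}$. Since \Cref{854} shows that $\Eq{854}$ is of the form \eqref{left-absorptive} with $f(\x,\y,\z) = (\y \op \z) \op (\x \op \z)$ and implies $\Eq{378}$, the hypotheses of \Cref{irred-desc} and its corollary are satisfied. It suffices to exhibit a witnessing substitution (i.e., an assignment of the indeterminates of $\Eq{3316}$ to distinct generators of $X$) such that the two sides of $\Eq{3316}$ become inequivalent elements of $\Magma_{X,\Eq{854}}$. This will produce a concrete magma (namely $\Magma_{X,\Eq{854}}$) satisfying $\Eq{854}$ but not $\Eq{3316}$.

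First I would take an alphabet $X$ containing distinct generators for each variable of $\Eq{3316}$, and interpret the two sides $w$ and $w'$ of $\Eq{3316}$ as words in $M_X$. Next I would compute irreducible representatives $\tilde w \sim_{\Eq{854}} w$ and $\tilde w' \sim_{\Eq{854}} w'$ by repeatedly applying the rewrite coming from \eqref{left-absorptive}: whenever a sub-word has the form $u \op v$ with $v \to_{\Eq{854}} u$, i.e., with $v \sim_{\Eq{854}} f(u,\cdot,\cdot) = (\cdot \op \cdot) \op (u \op \cdot)$, replace it by $u$. Because the original words are finite and each rewrite strictly reduces the order of the word, this terminates at irreducible words. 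In practice, for the small words making up both sides of $\Eq{3316}$, only a handful of reductions should be needed, and the reductions are essentially forced because the right factor rarely matches the pattern $f(\text{left factor}, \cdot, \cdot)$ when the variables are distinct.

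Once irreducible forms $\tilde w, \tilde w'$ are in hand, I would apply \Cref{unique-factorization} recursively: two irreducible words are equivalent under $\sim_{\Eq{854}}$ if and only if they are either equal as generators in $X$, or can be split into a top-level product whose left and right factors are individually equivalent irreducible words. By tracking the top-level binary tree structure of $\tilde w$ and $\tilde w'$, together with which leaves come from which generator of $X$, this reduces the problem of inequivalence to a finite syntactic check. I expect that for a generic substitution by distinct generators, the two irreducible forms will already disagree either in their top-level shape or in which generator sits at some leaf, immediately giving $\tilde w \not\sim_{\Eq{854}} \tilde w'$.

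The main obstacle is ensuring that the irreducible representatives are correctly computed. The relation $\to_{\Eq{854}}$ is defined through $\sim_{\Eq{854}}$, so in principle one might worry that a sub-word $v$ could be $\sim_{\Eq{854}}$-equivalent to some $f(u,y,z)$ even though it does not syntactically look like $f$; one must therefore argue, using the structure of $\Eq{854}$ (in particular the form \eqref{left-absorptive} and the implied $\Eq{378}$), that no such hidden reduction is possible for the specific words occurring. A clean way to do this is to inductively verify irreducibility together with a size/shape invariant, or equivalently to appeal to the canonization framework of \Cref{canon-sec} to produce a computable canonical form that respects $\sim_{\Eq{854}}$. Once the computation of irreducible forms is pinned down, the unique factorization corollary finishes the proof essentially mechanically.
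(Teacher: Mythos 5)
Your overall strategy---pass to the free magma on $\Eq{854}$ and use \Cref{unique-factorization} to reduce the inequivalence of the two sides of $\Eq{3316}$ to a comparison of factors---is the same as the paper's, and the reduction itself is sound: since both sides have the form $\x \op (\cdots)$, one is left (after handling the degenerate cases in which one side reduces to $\x$) with comparing $\y$ against $\y \op (\x \op \y)$. The genuine gap is in how you propose to finish. You treat the computation of irreducible representatives and the final comparison as ``a finite syntactic check'' that works ``essentially mechanically,'' and only flag at the end the problem that $\to_{\Eq{854}}$ is defined through $\sim_{\Eq{854}}$. But that problem is not a technical nuisance to be cleaned up; it is the entire remaining content of the proof. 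To certify that $\x \op \y$ is irreducible you must show $\y \not\to_{\Eq{854}} \x$, i.e.\ that $\Eq{854}$ does not entail $\x \formaleq \x \op \y$ (law $\Eq{4}$); to rule out $\y \op (\x \op \y) \sim_{\Eq{854}} \y$ (equivalently, to certify irreducibility of $\y \op (\x \op \y)$) you must show that $\Eq{854}$ does not entail $\Eq{10}$. Neither of these follows from observing that no sub-word syntactically matches the pattern $f(u,\cdot,\cdot)$: failure to match is only evidence, not proof, because $\to_{\Eq{854}}$ is closed under $\sim_{\Eq{854}}$-equivalence. The paper closes these cases semantically, by exhibiting finite counterexample magmas ($\mathbb{Z}/2\mathbb{Z}$ with $x \op y = x(1-y)$ for $\Eq{854} \nmodels \Eq{4}$, and an $11$-element magma for $\Eq{854} \nmodels \Eq{10}$); some such external input is unavoidable, and your proposal supplies none.

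Your suggested fallback of invoking the canonization framework of \Cref{canon-sec} also does not go through as stated: \Cref{thm:canon} requires the rewrite $R$ to be non-overlapping, and the left-hand side $\x \op ((\y \op \z) \op (\x \op \z))$ of $\Eq{854}$ has proper sub-words (e.g.\ $\y \op \z$) whose substitution instances can themselves be instances of the whole left-hand side, so the non-overlapping condition fails. This is precisely why the paper develops the separate unique-factorization theory for $\Eq{854}$-like laws rather than a canonizer. In short, the skeleton of your argument is right, but the step you defer is the one that requires genuinely new, and semantic, input.
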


\begin{proof}[Proof sketch]
  We work in the free magma $\Magma_X$ on two generators $X = \{\x,\y\}$.  It suffices to show that
$$  \x \op \y \not \sim_{\Eq{854}} \x \op (\y \op (\x \op \y)).$$
Suppose this were not the case, then by \Cref{unique-factorization} one of the following statements must hold:
\begin{itemize}
\item[(i)] $\y \to_{\Eq{854}} \x$.
\item[(ii)] $(\y \op (\x \op \y)) \to_{\Eq{854}} \x$.
\item[(iii)] $\y \op (\x \op \y) \sim_{\Eq{854}} \y$.
\end{itemize}
If (i) holds, then we have $\x \op \y = \x$ must hold in $\Magma_X/\sim_\E$, hence $\Eq{854}$ would imply $\Eq{4}$.  However, the magma $\mathbb{Z}/2\mathbb{Z}$ with $x \op y = x(1-y)$ refutes this implication.

Similarly, if (iii) held, then $\Eq{854}$ would have to imply $\Eq{10}$, but this can be refuted by a finite magma with $11$~elements.

Finally, if (ii) held, then the claim
$$  \x \op \y \sim_{\Eq{854}} \x \op (\y \op (\x \op \y))$$
to refute simplifies to
$$  \x \op \y \sim_{\Eq{854}} \x$$
and we are back to (i), which we already know not to be the case.
\end{proof}

\section{Proof automation}\label{automated-sec}

In this project we used proof automation in two ways: automated theorem provers (ATPs) and \emph{Lean} tactics.
ATPs are generally stand-alone tools that implement a (semi-)decision procedure for a given formal language or related set of languages.
For example, \emph{Vampire}~\cite{DBLP:conf/cav/KovacsV13,TheVampireDiary} is an ATP focused primarily on first-order logic using superposition, which we used extensively in this project.  We also made extensive use of \emph{Prover9} and \emph{Mace4}~\cite{prover9-mace4}.

ATPs are complex software that can contain bugs.
Instead of trusting ATP output, we used proof certificates, which many ATPs can produce, to reconstruct proofs in \emph{Lean}.
The details of proof reconstruction depend on the form of the proof certificate produced by the ATP\@.
We expand on this in \Cref{sec:proof-reconstruction}.

Tactics in \emph{Lean}, on the other hand, are meta-programs~\cite{DBLP:journals/pacmpl/EbnerURAM17} that build proofs.
In other words, they essentially take \emph{Lean} code as input and produce \emph{Lean} code as output.
In this manner, they look like another keyword in the language, and are tightly integrated by producing proofs directly.
Under the hood, their implementation can be arbitrarily complex, from syntactic sugar to full decision procedures.
The \texttt{duper} tactic~\cite{DBLP:conf/itp/CluneQBA24}, for example, implements a superposition calculus similar to \emph{Vampire}'s, but for dependent types\,---\,\emph{Lean}'s underlying logical foundation.

In the rest of this section we describe the different proof automation techniques used in this project.
We first discuss the different proof methods used: primarily superposition and equational reasoning; we then discuss the integration in \emph{Lean}, and finally we report some basic empirical results from this project.

\subsection{Proof techniques}

The two main families of ATPs and tactics we used are based on superposition/saturation and equational reasoning.
In this context we also include SMT solvers, which combine specific decision procedures for theories, like congruence closure for equational reasoning, with satisfiability (SAT) solving \cite{deMoura-Bjorner-2009}.
Finally, we also used \texttt{aesop}~\cite{DBLP:conf/cpp/LimpergF23}, which implements a version of tableau search.
This was used mainly to help specific constructions in refutations, and is not specific to proving or disproving magma implications in this sense.
We describe our use of \texttt{aesop} in \Cref{sec:proof-reconstruction} below.

\subsubsection{Saturation}
Most of the ATPs used extensively in this project rely primarily on saturation procedures in the superposition calculus.
For example, this is the case for \emph{Vampire}~\cite{DBLP:conf/cav/KovacsV13}.\footnote{See also~\cite{DBLP:journals/cacm/BentkampBNTVW23} for a gentler exposition.}
The core idea of these provers is that they take a set of assumptions and a conjecture, expressed in (say) first-order logic.
The conjecture is negated and added to the set of assumptions, which are all put into a normal form.
The ATP then tries to refute the negation by applying rules of an underlying calculus, until a proof of false (a contradiction) is derived.
In this case, the conjecture was (classically) true, and the ATP has found a proof by contradiction, often called a ``refutation'' or ``saturation'' proof.

The underlying calculi vary from system to system, but they often have a variant of a resolution clause of the form:
\[\infer{C \lor D}{C \lor L \quad D \lor \neg L} \]
This can be read as $C \lor L$ with $D \lor \neg L$ implies $C \lor D$, where $C, D, L$ are formulas in e.g. first-order logic.
Superposition calculi have a variant of this rule that deals with equality directly, and thus are more efficient at reasoning about equality.

In this project we used \emph{Vampire}~\cite{DBLP:conf/cav/KovacsV13}, \emph{Duper}~\cite{DBLP:conf/itp/CluneQBA24} and \emph{Prover9} and \emph{Mace4}~\cite{prover9-mace4} which are all based on variants of saturation for proving.

\subsubsection{Equational Reasoning} As already discussed in \Cref{canon-sec}, equational reasoning is a type of reasoning that is based\footnote{More precisely, one can formalize this reasoning using Birkhoff's five rules of inference (reflexive, symmetric, transitive, replacement, and substitution); see, e.g., \cite{burris}.} on equational logic and rewriting with congruence~\cite{term-rewriting}.
In general, an equational reasoning procedure takes a series of equations and tries to determine whether another equation can be deduced from it.
A core tool in equational reasoning are \emph{e-graphs}, a data structure used to represent congruence classes of terms.
By themselves, e-graphs provide an efficient means of implementing a decision procedure for congruence closure over ground equations (i.e., equations without variables).
Extensions to this procedure, for example by quantifier instantiation via e-matching \cite{DBLP:conf/cade/MouraB07}, also allow for a semi-decision procedure for congruence closure over non-ground equations.

SMT solvers like \emph{Z3}~\cite{DBLP:conf/tacas/MouraB08} use equational reasoning for deciding the theory of equality with uninterpreted functions~\cite{DBLP:series/txtcs/KroeningS16,DBLP:conf/cade/MouraB07}.
On the other hand, equality saturation~\cite{DBLP:journals/pacmpl/WillseyNWFTP21} uses e-graphs by extending congruence closure to a more controlled search, enabling optimization and conditional rewriting.
One of the main advantages of equational reasoning for implications of magma laws is that we get very explicit proofs: a proof that $l \models l'$ is given by a sequence of rewrites that starts at the left-hand side of $l'$ and arrives at the right-hand side through applications of $l$.

In this project we used \emph{Z3}~\cite{DBLP:conf/tacas/MouraB08}, \emph{Prover9} and \emph{Mace4}~\cite{prover9-mace4}, a custom ATP \emph{MagmaEgg} for magmas based on \emph{egg}~\cite{DBLP:journals/pacmpl/WillseyNWFTP21}, and the \emph{Lean} \texttt{egg} tactic~\cite{DBLP:journals/pacmpl/KoehlerGBGTS24,egg_tactic}, which all work with equational logic. We have also reasoned with manual (custom written) heuristics about simple rewrites.

\subsection{Integration of automation procedures}
\label{sec:proof-reconstruction}

While ATPs are very useful for proving theorems in this project, they do not integrate with \emph{Lean} out of the box.
ATPs may produce unsound proofs, or worse, derive incorrect results.
Thus, by default, theorems in \emph{Lean} cannot be proven by deferring to the result of an ATP\@.
Instead, the results of an ATP can be used to reconstruct a proof of the form required by \emph{Lean}.
Thus, in general, integration of ATPs requires two steps.
First, there is the invocation of the ATPs by translating the problem from \emph{Lean} into the languages and logics they use.
And second, there is the reconstruction of the ATPs' results as a (persistent) \emph{Lean} proof.
These two aspects present different challenges, and require different strategies, depending mostly on the kind of proof strategy the ATP uses.

More generally, we have observed that there are multiple ways of integrating decision procedures within \emph{Lean}, with different levels of integration.

\begin{enumerate}
    \item Using a \emph{Lean} tactic, which calls a decision procedure written in \emph{Lean} (like \texttt{aesop} or \texttt{duper}).
    \item\label{inter} Using a \emph{Lean} tactic, which calls an existing (external) ATP and reconstructs a proof term from the ATP's result (like \texttt{bv\_decide} or \texttt{egg}).
    \item\label{external} Using an external script which calls an existing ATP and generates a source file \texttt{.lean} which captures the result explicitly.
\end{enumerate}

This project primarily used the least integrated approach, Option~\ref{external}, as it was the fastest to implement and imposed no additional technical requirements on other contributors.
The matter of technical requirements caused problems, for example, when integrating the \texttt{egg} tactic (Option~\ref{inter}) as it initially expected certain software on the user's machine.
Such trade-offs between Option~\ref{inter} and Option~\ref{external} are, however, mutual, as the higher upfront cost of integrating a proof tactic in Option~\ref{inter} makes the decision procedure easier to use than with Option~\ref{external}.
Additionally, Option~\ref{inter} can benefit from \emph{Lean}'s meta-programming capabilities when encoding the problem for use with an ATP, and when reconstructing a \emph{Lean} proof from the result.

\subsubsection{Proof Reconstruction}
The relative simplicity of the objects used in this project benefits the implementations of proof reconstruction.
By focusing on the given problem domain, difficult reconstruction issues, like complex dependent types, could be ignored.

For saturation proofs with \emph{Vampire}, we implemented analogs of the \emph{superpose}, \emph{resolve}, and \emph{subsumption} steps in \emph{Lean}.
Proofs can then be reconstructed as sequences of these steps (and additional technicalities) as shown in \Cref{fig:vampire-example}.

\begin{figure}
\centering
\begin{lean}
@[equational_result] theorem Equation999_implies_Equation86
    (G : Type*) [Magma G] (h : Equation999 G) : Equation86 G := by
  by_contra nh
  simp only [not_forall] at nh
  obtain ⟨sK0, sK1, sK2, nh⟩ := nh
  have eq9 (X0 X1 X2 X3 : G) : (X1 ◇ ((X2 ◇ X3) ◇ (X1 ◇ X0))) = X0 :=
    mod_symm (h ..)
  have eq10 : sK0 ≠ (sK1 ◇ (sK2 ◇ (sK1 ◇ sK0))) := mod_symm nh
  have eq11 (X3 X4 X5 : G) : (X4 ◇ (X3 ◇ (X4 ◇ X5))) = X5 := superpose eq9 eq9
  have eq21 : sK0 ≠ sK0 := superpose eq11 eq10
  subsumption eq21 rfl
\end{lean}
\caption{Example of a proof reconstructed from output of \emph{Vampire}. Note how the proof proceeds by contradiction and uses the \texttt{superpose} and \texttt{subsumption} steps implemented in \emph{Lean}.}
\label{fig:vampire-example}
\end{figure}

For equational proofs from external provers, like \emph{MagmaEgg}, we also used a tailored version of reconstruction.
Specifically, the \emph{MagmaEgg} implementation turns \emph{explanations} \cite{nieuwenhuis2005proof} from \emph{egg} into \emph{Lean} proofs by simple applications of the defining properties of equality as shown in \Cref{fig:magma-egg-example}.

\begin{figure}
\centering
\begin{lean}
@[equational_result] theorem Equation3973_implies_Equation4023
    (G : Type _) [Magma G] (h : Equation3973 G) : Equation4023 G := fun x y z =>
  let v0 := M z x
  let v1 := M z v0
  let v2 := M v1 y
  let v3 := M z v1
  have h4 := R v2
  have h5 := R z
  have h6 := h z x v1
  let v7 := M x (M v1 z)
  T (T (T (T (h x y z) (h (M y v0) z v2)) (C (C h5 (C h4 (T (T (h y v0 v2)
  (C (C (T (T (T h6 (h v7 v1 v0)) (C (C (R v1) (T (h v0 v7 z) (C (S h6) h5)))
  (R v0))) (S (h z v1 v0))) (R (M v2 y))) h4)) (S (h y v3 v2))))) h4))
  (S (h (M y v3) z v2))) (S (h v1 y z))
\end{lean}
\caption{Example of a proof reconstructed by \emph{MagmaEgg}. Note the proof only uses reflexivity, symmetry, transitivity, and congruence of equality.}
\label{fig:magma-egg-example}
\end{figure}

In the case of the \texttt{egg} tactic, which also reconstructs proofs from \emph{egg} explanations, the proof could be converted into a more human-readable form by using the \texttt{calcify}\footnote{\url{https://github.com/nomeata/lean-calcify}} tactic, as shown in \Cref{fig:egg-example}

\begin{figure}
  \centering
  \includegraphics[width=\textwidth]{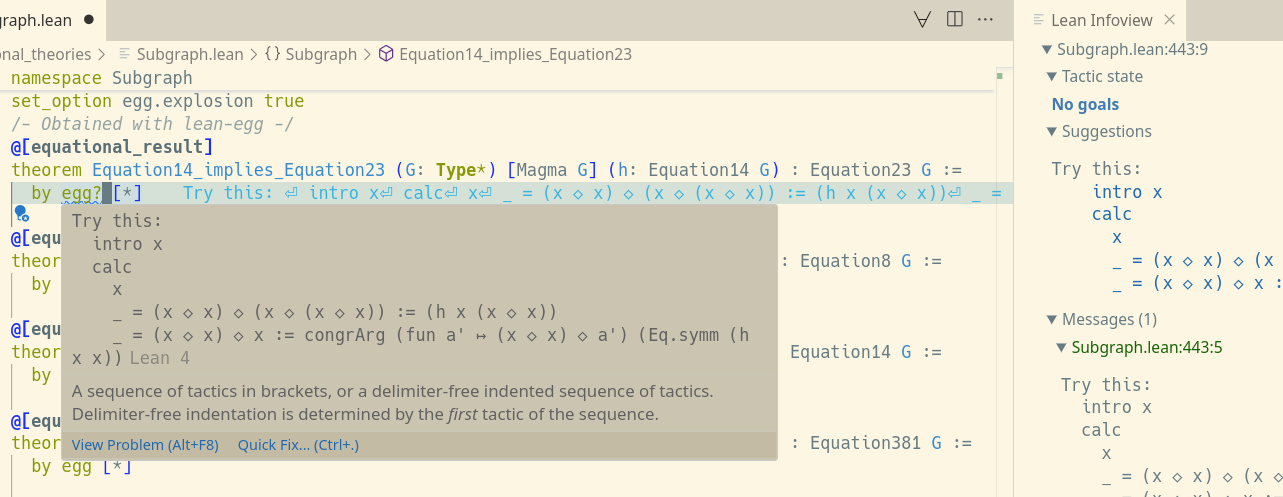}
  \caption{Example of the \texttt{egg} tactic reconstructing a proof in human-readable form with the help of \texttt{calcify} (invoked by the special syntax \texttt{egg?}).}
  \label{fig:egg-example}
\end{figure}

\subsubsection{Semi-Automated Counterexample Guidance}  Another use of ATPs has been in a semi-automatic fashion, to find counterexamples.
The general strategy was to use ATPs to find counterexamples to implications by building magmas iteratively.
If we want to build a counterexample to $l \models l'$, we want to construct a magma where $l$ holds but $l'$ does not.
In this method, we iteratively strengthen a construction with additional hypotheses, and use the ATP to check whether these hypotheses are not too strong (to imply $l'$) or unsound (to disallow $l$).


While equational reasoning can also be used in a semi-automatic fashion to prove equations~\cite{DBLP:journals/pacmpl/KoehlerGBGTS24}, the positive implications in the main implication graph of the project were all simple enough that we did not need a semi-automatic approach for them.

\subsection{ATP usage}\label{ATP_usage}

When the project started, contributors had varying degrees of ATP knowledge and skills, with some of us having to start using them from the ground up. With hindsight, several of the project computations could have been approached in better ways, their difficulty diminished due to better ATP expertise. Accordingly, in this section\footnote{Consult the ETP site for a substantially expanded version of these notes.} we provide some facts and hints about ATP usage, primarily with the algebraist working with (unsorted) equational theories in mind, based on our experience and available evidence\footnote{The timings presented here cannot be taken as benchmarks. Different experiments were executed on different machines, with heterogeneous software and OS environments (Ubuntu, Windows 10...), with varying numbers of parallel processes\,---\,both internal and external to the experiment\,---\,and, in many cases, under the Linux \Verb|nice| command.} within the ETP\@. We restrict our attention to \emph{Vampire} and \emph{Prover9}-\emph{Mace4}, which were the main tools used for exploration of implications and anti-implications along the ETP\@.

\subsubsection{Employing several ATPs}
In general, given a batch of problems of interest, it is useful to employ several ATPs when they complement each other on the batch\,---\,i.e., when they can solve different sets of problems. This is often the case with \emph{Vampire} and \emph{Prover9}: e.g., in a 2024 study~\cite{LPAR2024:Prover9_Unleashed_Automated_Configuration}, it is shown that from a batch of around 770 TPTP problems solved with \emph{Vampire} and \emph{Prover9} together (with some restrictions), around $60\%$ are solved by both, $20\%$ are solved only by \emph{Vampire}, and the remaining $20\%$ are solved only by \emph{Prover9}.\footnote{In contrast, in said study the E~prover solves only a single problem not solved by \emph{Vampire}.}

Although older and virtually discontinued, \emph{Prover9} is still very useful with equational logic and algebraic problems. Within the ETP we likewise identified several problems that were easier to solve with \emph{Prover9} than with \emph{Vampire}, and even some solved by \emph{Prover9} but not by \emph{Vampire} (with the configurations we tried). The most salient example is the proof that $\Eq{102744082}$ implies the injectivity of the right multiplication map, used in the Higman-Neumann side project (see \Cref{higman-neumann}), which was originally found by \emph{Prover9} in several hours with parameters chosen to produce a big search space; an optimized choice of \emph{Prover9} options lowers the runtime to 0.2s. Upon contact with the \emph{Vampire} development team, after several attempts they were able to provide a \emph{Vampire} configuration (inspired by the \emph{Prover9} optimization) which produces a proof in 3s.

In addition, \emph{Mace4}'s non-SAT algorithm generally makes it faster at finding models than generic SAT-based finite model builders, such as those implemented in \emph{Vampire}. For ETP problems in particular, a well-configured \emph{Mace4} is faster than \emph{Vampire}'s finite model builder in both the task of finding a model of a given size and that of exhausting a size with no models. For example, for $\Eq{677}$ models, \emph{Mace4} is able to exhaust size 8 in 1.5s, and to find a model of size 9 in 16s, while \emph{Vampire}'s finite model builder is unable to perform any of these tasks in 10 minutes. By contrast, \emph{Vampire}'s \verb|casc_sat| mode can be quicker at finding \emph{some} model of some size for a given problem.

\subsubsection{Basic usage}
When running an ATP or finite model builder on a nontrivial problem, the two key recommendations are: 1) Conduct several runs with different search-parameter settings, and 2) provide enough computational resources for each search: memory\footnote{If a memory limit is not specified, \emph{Vampire} will try to use as much RAM as possible, but by default \emph{Prover9}-\emph{Mace4} sets memory limits which are rather low for current standards, so we advise raising them before starting a long computation.}, number of processing cores\footnote{\emph{Vampire} can make use of as many cores as the user specifies. To take advantage of multicore processors with \emph{Prover9}-\emph{Mace4}, one can run several terminal or GUI instances. Moreover, each GUI window allows to run \emph{Prover9} and \emph{Mace4} simultaneously on the same problem, and they run on different cores.}, number of user instructions executed, and especially, time. Along the ETP, some proofs or models that could not be found in under 1 second with a given configuration, could be obtained in under (say) 8 seconds without altering the configuration; other implications initially required runs lasting several minutes, or even hours. Once a proof is found by some configuration, a short and quick proof can typically be found by tweaking that configuration.

The configuration of the search parameters is a difficult art that has become more sophisticated as ATPs have grown in complexity. To address difficult problems, we strongly recommend acquiring a solid understanding of the available configuration options and their effects, enabling the search space to be tailored as closely as possible to the problem at hand. Currently, options allow control over numerous aspects of each proving stage, including search limits, preprocessing steps, inference rules, formulas ordering and weighting, etc.

\subsubsection{Flow control}\label{flow control}
\emph{Vampire} is equipped with a user-friendly and powerful standard mode, the CASC mode (or CASC SAT mode for finite model building) that in many situations avoids the need of configuring specific search options for the given problem. This mode invokes a sequence of strategies (different option configurations) with assigned time limits. The time given to each strategy is important: counterintuitively, giving less time to the CASC mode may end up producing a faster proof: there are proofs that can be found when the time limit is set to less than $5\%$ of the time \emph{Vampire} needs to find a proof without the time limit~\cite{DBLP:conf/cav/KovacsV13}. At least two factors contribute to this effect: the correct strategy is explored sooner because less time is spent on each strategy, and the behaviour of \emph{Vampire}'s default saturation algorithm, the limited resource strategy (LRS) algorithm.

On the other hand, \emph{Prover9} itself does not have the capability of running several schedules in a row, although it lets the user implement some rudimentary strategies through commands called \emph{actions}. Also, a separate program called \emph{FOF-Prover9} includes a preprocessing step that attempts to reduce the problem to independent subproblems, possibly reducing the overall time significantly.

\subsubsection{Input}\label{input}

Both \emph{Prover9}-\emph{Mace4} and \emph{Vampire} present pitfalls regarding quantifier scope, which can lead to mistakes for unwary algebraists.
Let us illustrate this with \emph{Prover9}, using \verb|*| for the operation~$\op$.  In this ATP, free variables are universally quantified at the outermost level of the whole formula, regardless of its parenthesization.
For example, to formulate the property that right multiplications are injective (\texttt{y*x = z*x -> y=z}) if and only they are surjective (\texttt{exists w w*u=v}), one must include universal quantifiers explicitly on both sides of the binary connective \texttt{<->} (quantifiers have higher precedence than \texttt{<->}),
\begin{equation*}
  \texttt{all x all y all z (y*x = z*x -> y=z) <-> all u all v exists w w*u = v.}
\end{equation*}
Omitting the \texttt{all} quantifiers would quantify all variables outside the equivalence, while writing the left-hand side as \texttt{(all x all y all z y*x = z*x -> y=z)} would amount to $(\forall x,y,z, y\op x=z\op x) \implies y'=z'$, with $y'$ and~$z'$ being free variables, hence universally quantified.
We have in fact committed such mistakes over the course of the ETP, thereby giving rise to false proofs of $\Eq{677}\modelsfin\Eq{255}$, our only open implication.

Frequently, it is useful not to run an ATP alone, but to run it in a larger environment permitting multiple calls with different input files, strategies, etc.\@ so that we can provide semi-automated guidance. Integrating the ATP into a computer algebra system further allows to leverage the latter's mathematical capabilities, such as preparing input files with operations from sophisticated algebraic structures. For example, in the ETP we have integrated \emph{Prover9}-\emph{Mace4} with Python and SAGE, and (among several other applications) used SAGE to access GAP's small groups library to search, via \emph{Mace4}, for translation-invariant countermodels on specific groups (see \Cref{translation-sec}).

An important factor to consider when generating an input file for proving a conjecture is, for both \emph{Vampire} and \emph{Prover9}, that owing to the way the saturation algorithm operates, the original order in which formulas are presented is actually important: it is perfectly possible to have a collection of axioms which produces a proof in some order but not in another. \emph{Vampire} includes the option \verb|--normalize| to prevent this effect. For \emph{Prover9}, one can use a larger environment to automate the permutations of the input file and make several tries with time restrictions. In contrast, the order of the formulas does not affect \emph{Mace4}'s response.

Moreover, the inclusion of additional formulas redundant with the premises may significantly speed up the proof search, if those formulas are not quickly derived by the ATP algorithm (e.g., evaluations mapping different variables to the same one are routinely included).

Finally, the use of demodulators can greatly simplify and speed up the proof search. In \emph{Prover9}, the \verb|assign(eq_defs, fold)| command allows to substitute any specified subexpression by a user-defined symbol, simplifying all further expressions generated by the ATP\@. For instance, a quick proof that $\Eq{102744082}$ implies the injectivity of right multiplications is obtained by setting \verb|eq_defs| and adding the formula \verb|s(x) = x*x| (where \verb|*| stands for $\op$) to the premises (together with a good weight limit, see \Cref{weight_strategy}). In \emph{Vampire}, deactivating the \verb|--function_definition_elimination| mode can serve a similar purpose.

\subsubsection{The weight-sos limit strategy}\label{weight_strategy}

In \emph{Prover9}, each clause is assigned a weight depending on its length (and other customizable parameters), with newly generated clauses exceeding the \verb|max_weight| limit being discarded. In addition, the size of the list of clauses awaiting processing (the SOS list) is controlled with the \verb|sos_limit| parameter, with most newly generated clauses being discarded once the list is full. Consequently, the size and shape of the search space can be partially controlled through these parameters. Ideally, we would want to use the smallest values of \verb|max_weight| and \verb|sos_limit| that still guarantee a proof to be contained in the search space. Thus smaller values are preferable, provided they are not so low that the search is exhausted without finding a proof.

Since there are more parameters affecting the search, and different proofs may be reachable depending on the configuration, the system's behaviour with respect to \verb|max_weight| and  \verb|sos_limit| is not straightforward, with several casuistics arising. When \verb|sos_limit| is fixed, the proof-finding time tends to rise with \verb|max_weight| until reaching a plateau, with the length of this increasing phase extending for higher \verb|sos_limit| values (see \Cref{figure:weight_time}). For this reason we recommend lowering \verb|sos_limit| from its default value of 20000 to substantially smaller values.

\begin{figure}
\centering
\includegraphics[width=225pt]{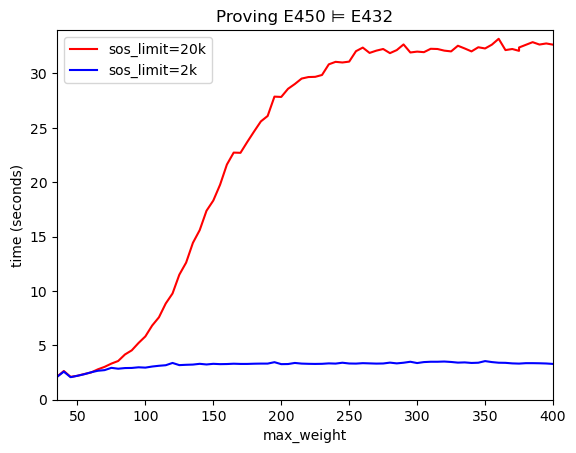}
\includegraphics[width=230pt]{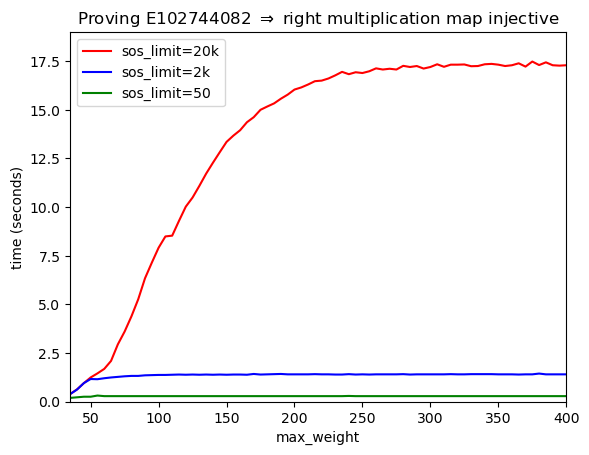}
\caption{Proof-finding time as a function of \texttt{max\_weight}, for several values of \texttt{sos\_limit}.}
\label{figure:weight_time}
\end{figure}

On the other hand, when \verb|max_weight| is fixed, the smallest \verb|sos_limit| values that still yield proofs typically produce a chaotic transient phase with higher proof-finding times, after which a better-defined relation between proof-finding time and \verb|sos_limit| emerges, which tends to follow either a near-plateau pattern (\Cref{figure:sos_limit_time}a)), or a monotonically increasing trend (\Cref{figure:sos_limit_time}b)). We recommend avoiding an excessively low \verb|sos_limit|, in order to prevent the onset of the transient phase.

\begin{figure}
\centering
\includegraphics[width=305pt]{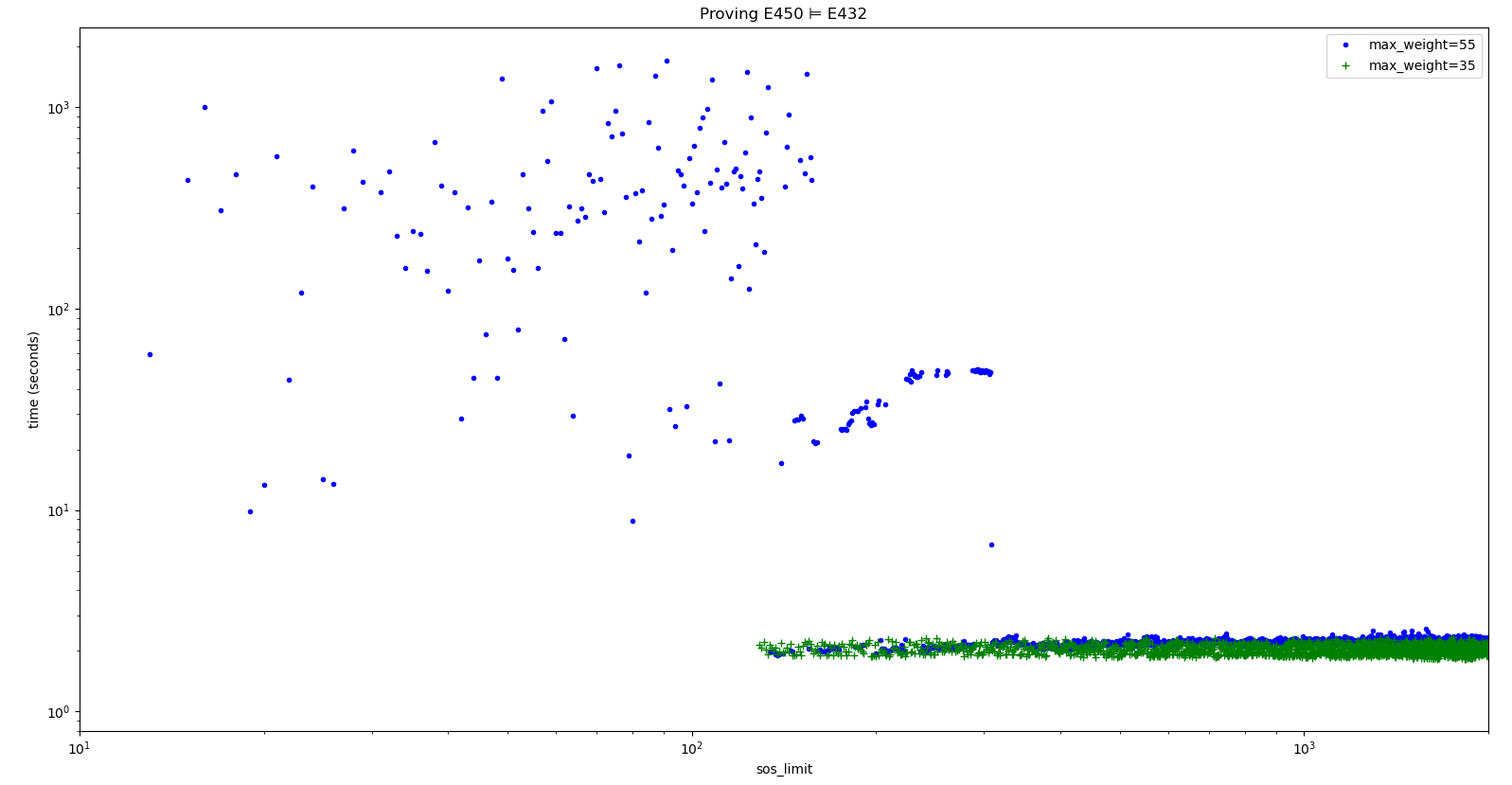}
\includegraphics[width=310pt]{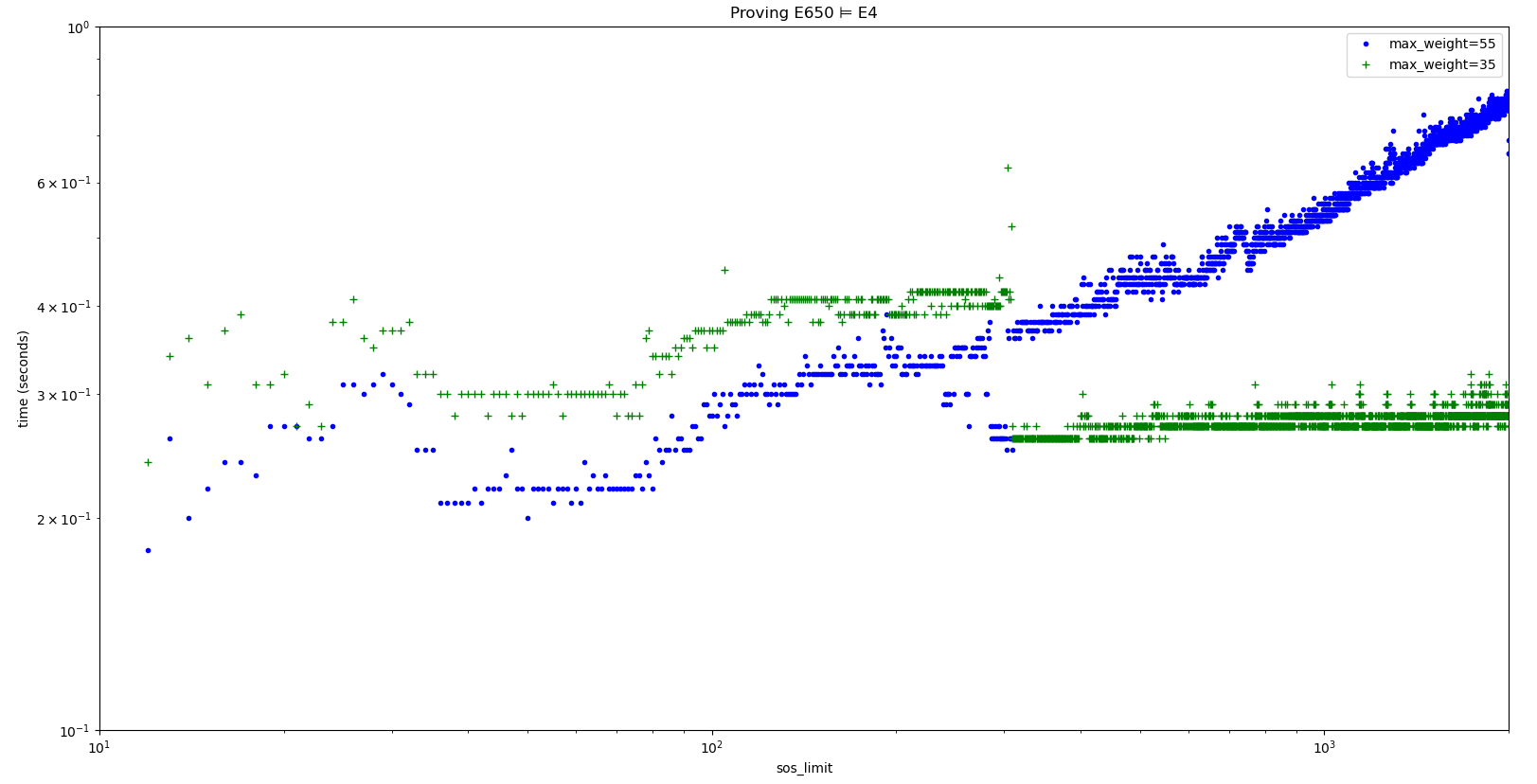}
\caption{Proof-finding time as a function of \texttt{sos\_limit}, for different values of \texttt{max\_weight}. a) Stabilization as plateau, same for different weights. b) Different behaviours for different weights, with one trend monotonically increasing.}
\label{figure:sos_limit_time}
\end{figure}

Other relevant characteristics related to the complexity of the resulting proof (such as proof level, length, weight, etc.\@) may also vary in nontrivial ways depending on the chosen \verb|sos_limit| value (see \Cref{figure:sos_limit_proof_data}).

\begin{figure}
\centering
\includegraphics[width=230pt]{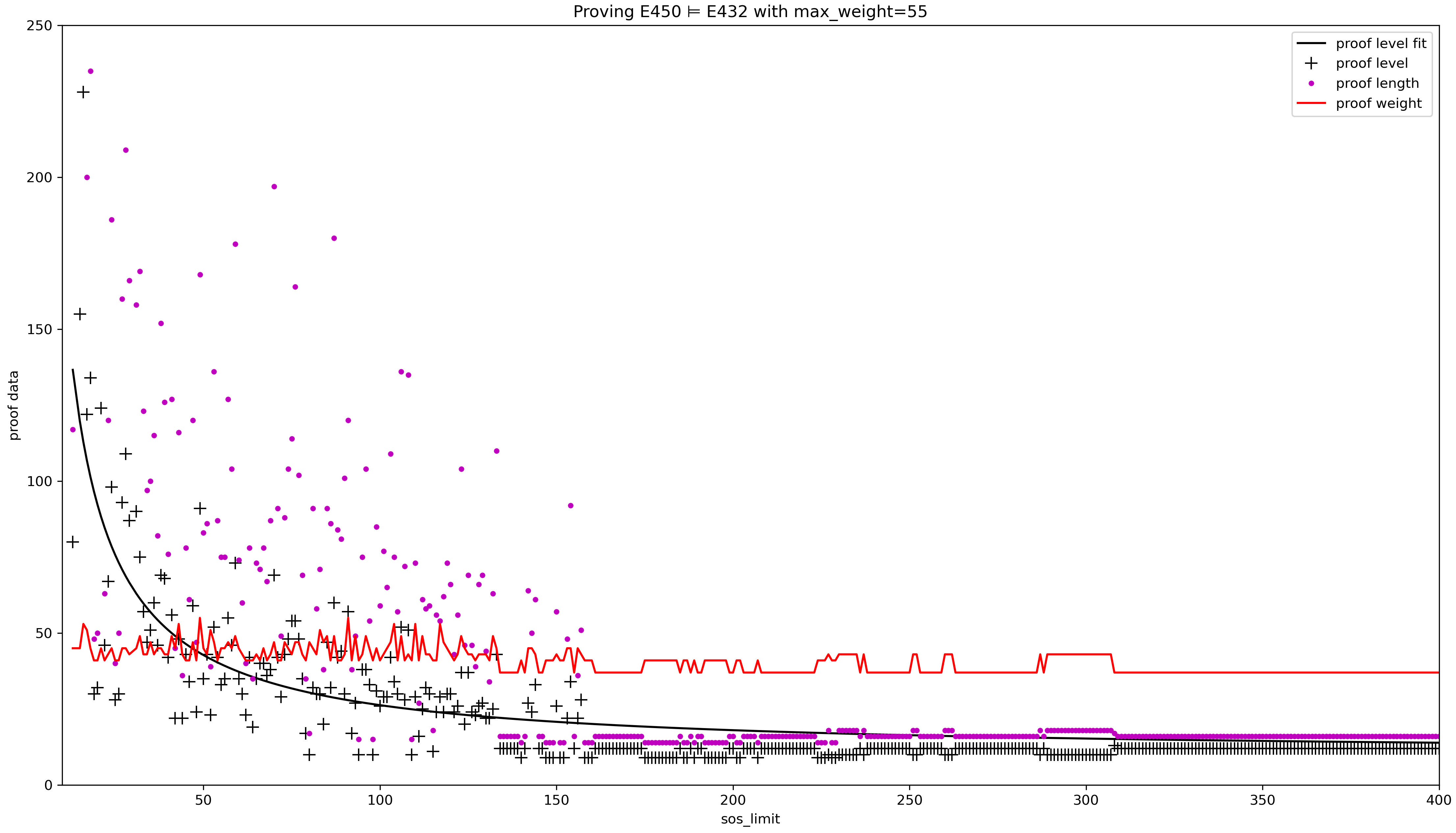}
\includegraphics[width=230pt]{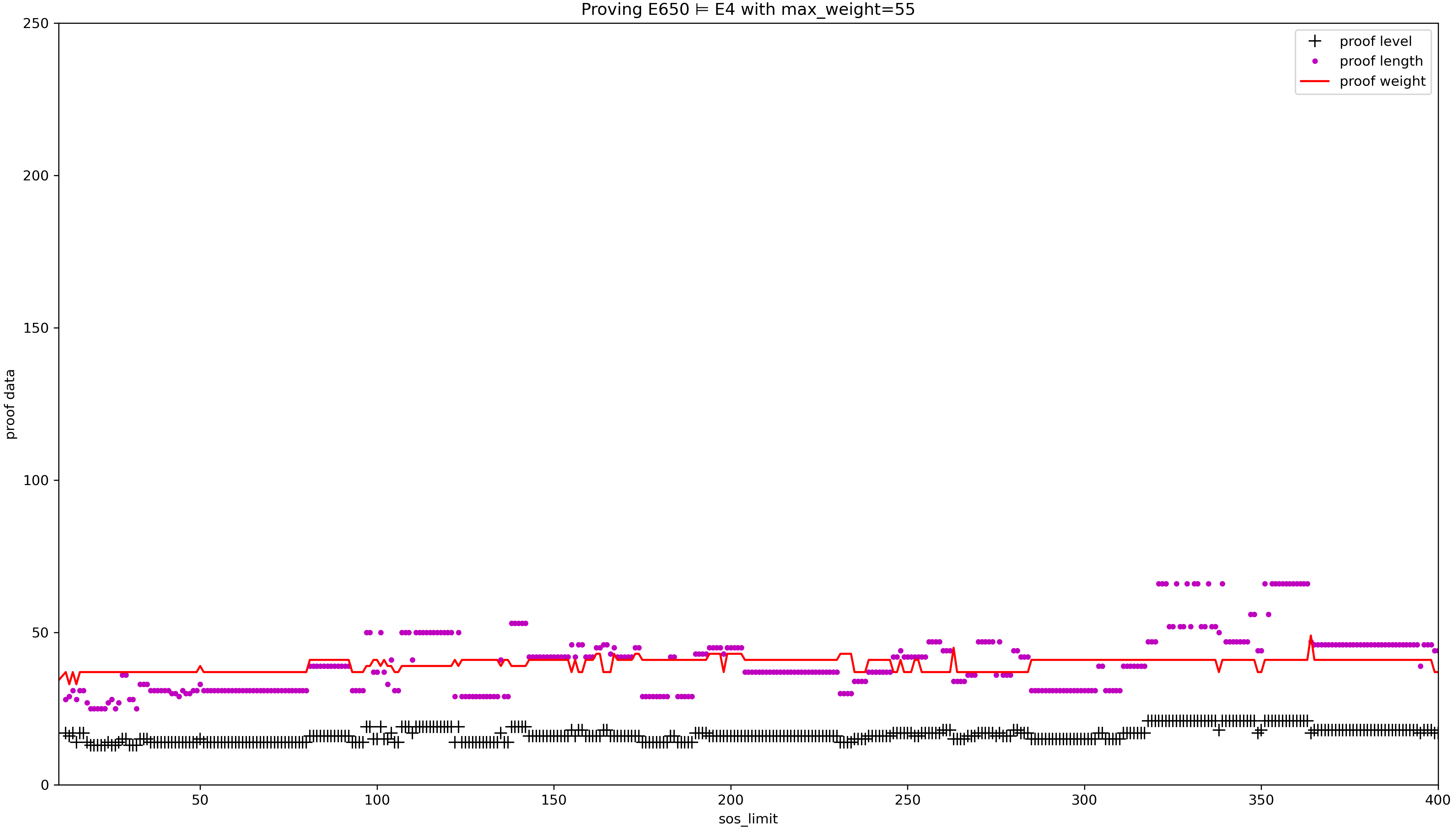}
\caption{Proof-complexity indicators as a function of \texttt{sos\_limit}.}
\label{figure:sos_limit_proof_data}
\end{figure}

Remarkably, \emph{Prover9} can establish all positive implications between laws of order up to 4 using \verb|max_weight| 55 and \verb|sos_limit| 20000, with \verb|max_weight| 25 being sufficient for almost every case. With default parameters (\verb|max_weight| 100, \verb|sos_limit| 20000), all consequences of each equation can be proven in at most $1$ second per equation, with the exception of laws $\Eq{450}$ and $\Eq{650}$ (and their duals), which take somewhat longer\footnote{For details, see \url{https://leanprover.zulipchat.com/\#narrow/channel/458659-Equational/topic/Which.20implications.20are.20harder.20for.20ATPs/near/547089094}.}.

\emph{Vampire} does not allow the imposition of a user-defined maximum weight, although its default LRS algorithm already implements a dynamic weight limit. According to \cite{janota2025experimentalresultsvampireequational}, \emph{Vampire}~5.0 can prove all true implications from the ETP, and can prove 99.97\% of them using less than 500 instructions per proof.

\subsubsection{Mace4}\label{Mace4}

The configuration of \emph{Mace4} likewise has a substantial impact on its running time, with different parameter settings resulting in differences of orders of magnitude. In particular, model-computation time is greatly affected by the \verb|selection_measure| and \verb|skolems_last| parameters. The \verb|selection_order| parameter is also relevant, but our empirical data indicate that \verb|selection_order| 2 is by far the best general choice. It is outperformed by \verb|selection_order| 0 only in a few cases, and even then the improvement in computing time is not that substantial. The effect of setting \verb|skolems_last| can range from slightly detrimental to highly advantageous: as an example, a model of $\Eq{272260}\nmodels\Eq{42323216}$ of size $7$ is found in $0.0$s with (2,4,Y), and in $4.0$s with (2,4,N). Here and in the following we use notation (a,b,S) to mean the configuration \verb|selection_order| a, \verb|selection_measure| b, and \verb|skolems_last| set (S$=$Y) or clear (S$=$N).

There is no universally optimal configuration (a,b,S): for each (potential) implication or theory, configurations may be ranked differently. However, in the ETP context, certain configurations have performed consistently better, roughly ordered as (2,3,Y), (2,1,Y), (2,4,Y), (2,3,N), (2,1,N), (2,4,N), (0,1,N), (0,1,Y), with the remaining configurations being largely avoidable. Problems involving several algebraic operations, or presenting some kind of symmetry (see \Cref{table:weakcentralgroupoids} on weak central groupoids), tend to perform better with (2,4,N/Y). Problems may also be considerably sensitive to changes in formulas that appear minor, for example adding $f^n(x)=x$ with different values of~$n$ can result in different optimal configurations.
\looseness=-1

\begin{table}
\centering
\caption{Largest size of $\Eq{1485}$ (weak central groupoids) models exhausted by each configuration in under 300s. The \texttt{skolems\_last} parameter turns out to be irrelevant for this problem.}
\label{table:weakcentralgroupoids}
\begin{tabular}{cccccc}
  \toprule
  Configuration &(2,4,Y/N) & (0,4,Y/N) & (2,3,Y/N) & (2,2,Y/N) & (2,1,Y/N) \\\midrule
  Size &14 (in \SI{127}{s}) & 14 (in \SI{166}{s}) & 13 (in \SI{290}{s}) & 9 (in \SI{33}{s}) & 7 (in \SI{8}{s}) \\
  \bottomrule
\end{tabular}
\end{table}

In addition, while in the ETP we typically searched for a single model to contradict a given implication, in some cases we sought all models of a certain size for some given theory, either to better understand a collection of base examples in order to extend them, or just to improve our understanding. Accordingly, it should be noted that a configuration that is fast for finding one model of a given size may be slower than others for exhausting the whole size. A notable example of this is (2,2,Y) for models of $\Eq{677}$ of size 9, which finds a single model in 160s, but is unable to exhaust the whole size after 20h (see \Cref{table:677models}).

When undertaking a long-running model search at a large size, it is advisable to determine in advance the potentially optimal configuration(s) by examining smaller sizes. Throughout the ETP, we have employed the following procedure. If time permits, exhaust the previous size with all configurations to make a choice. If that process is prohibitively time-consuming, then set the target number of desired configurations (e.g., for parallel runs), initialize the pool of configurations with all possibilities, and iterate:
\begin{enumerate}
\item Pick the next available smallest size.
\item Run all configurations on that size under a reasonable time limit and collect their running times, whether for finding a single model, a predetermined number of models, or for exhausting the size.
\item Based on the smallest time found, determine a statistically significant time threshold.
\item Remove from the pool those configurations whose time differs from the best one by more than the threshold (if any). End the loop once the number of configurations contained in the pool is the desired one.
\end{enumerate}
Be aware that this algorithm assumes that once a configuration outperforms another, this advantage carries over to larger sizes. This is not always the case (see \Cref{table:Schneider} for an example). Additionally, the algorithm may be further refined by considering combinations of the configurations with different subsets of formulas.

\begin{table}
\caption{Time to find one model of $\Eq{677}$ of size~$9$ and to exhaust the same size for different configurations. Some finite-setting formulas were included to speed up the search (see \Cref{finite setting}). The configurations not listed here were unable to find a model in \SI{4000}{s}.}
\label{table:677models}
\centering
\small
\sisetup{table-format=7.2, table-alignment-mode=format, table-number-alignment=center, table-align-text-post=false, table-align-text-pre=false}
\begin{tabular}{cSS}\toprule
Configuration & {One model} & {Exhaust size} \\\midrule
(2,1,N) & 16 \,\unit{s} & 190\,\unit{s} \\
(2,1,Y) & 16 \,\unit{s} & 210\,\unit{s}\\
(0,1,Y) & 20 \,\unit{s} & 149\,\unit{s}\\
(0,1,N) & 32 \,\unit{s} & 169\,\unit{s}\\
(2,2,Y) & 154 \,\unit{s} & {$>$}72000\,\unit{s} \\
(2,2,N) & 180 \,\unit{s} & {$>$}43000\,\unit{s} \\
(2,3,N) & 557 \,\unit{s} & 877\,\unit{s}\\
(2,3,Y) & 587 \,\unit{s} & 941\,\unit{s}\\
(2,4,Y) & 596 \,\unit{s} & 761\,\unit{s}\\
(2,4,N) & 620 \,\unit{s} & 730\,\unit{s}\\
(0,3,N) & 3746 \,\unit{s} & 4151\,\unit{s}\\
(0,3,Y) & 3777 \,\unit{s} & 4254\,\unit{s}\\
\bottomrule
\end{tabular}
\end{table}

\begin{table}
\caption{Comparison of the two best configurations for finding certain special $\Eq{677}$ models, both for exhausting each size and for finding a single model, with best times in boldface. Note that initially (2,4,N) seems the best configuration, but in the long run, (2,3,Y) outperforms it. The comparison is particularly misleading at size 16 when searching for a single model.  All times are in seconds.}
\label{table:Schneider}
\centering
\small
\sisetup{table-format=4.4, detect-weight=true, table-align-text-post=false}
\begin{tabular}{cS@{}SS@{}S}\toprule
& \multicolumn{2}{c}{Exhaustion} & \multicolumn{2}{c}{Finding one model} \\
\cmidrule(lr){2-3}\cmidrule(lr){4-5}
Size & {(2,3,Y)} & {(2,4,N)} & {(2,3,Y)} & {(2,4,N)}\\\midrule
9 & 0.14 & \bfseries 0.04 & 0.02 & \bfseries 0.0\\
10 & 0.32& \bfseries 0.12& {---} & {---}   \\
11 & 0.82& \bfseries 0.37& 0.15& \bfseries 0.01\\
12 & 2.17& \bfseries 1.2& {---} & {---}   \\
13 & 6.18& \bfseries 4.35& 2.74& \bfseries 0.91\\
14 & 15.47& \bfseries 14.39& {---} & {---}   \\
15 & \bfseries 38.82& 50.87& {---} & {---}   \\
16 & \bfseries 120& 175& 11.95& \bfseries 0.02\\
17 & \bfseries 306& 660& {---} & {---}   \\
18 & \bfseries 883& 2382& {---} & {---}  \\
19 & \bfseries 2300& {\!\llap{$>$}}3600 & \bfseries 846& 2082\\\bottomrule
\end{tabular}
\end{table}

\subsubsection{Search of compatible properties} When in search of a model, either theoretically or using a model builder, it is useful to be able to identify additional properties to impose on the model, strong enough to simplify the search, yet weak enough to guarantee compatibility with the original axioms. In particular, when seeking a countermodel to implication $\E\models\E'$, we should avoid properties that, together with $\E$, would imply $\E'$. This search can be greatly aided by an ATP: if we have an educated guess that some property $P$ is compatible with the problem $\E\nmodels\E'$, we can run the ATP on $\E\wedge P$ under various strategies to attempt a proof of $\E'$. If, after allowing ample time, no proof is found, this provides heuristic evidence that a countermodel satisfying $P$ exists. Note that even if no proof actually exists, it may well be that there are infinite countermodels but not finite ones.

In the ETP, we successfully applied this approach to several of the outstanding implications. Most notably, we were able to construct an infinite model of $\Eq{1323}\nmodels\Eq{2744}$ after heuristically verifying compatibility\footnote{Specifically, for each property, we ran \emph{Prover9} for 20 minutes with the default configuration and \emph{Vampire} for 999s in CASC mode.} with a unit element and with closure of the operation over the set of square elements.

\subsubsection{Finite setting}\label{finite setting}

There are situations in which one wants to work in the finite setting, for example, when proving a finite implication. Indeed, model builders typically search for finite models; therefore, when using them, we can usually assume we are operating in the finite setting. In this context, injective (resp. surjective) maps are bijective, bijections are periodic maps, and so on. In addition, any law of the form $x = f(y)\op g(x,y)$ for some maps $f,g$ has left multiplication map $L_{f(y)}$ surjective, hence bijective. Thus by applying the substitution $x\mapsto f(y)\op x$ and simplifying $f(y)$ on the left, the law finitely implies $x = g(f(y)\op x,y)$.

By this and similar approaches we typically find some multiplication or related map (squaring, etc.) to be injective and surjective, properties that we may add to our initial formulas. In hindsight, adding injectivity to a finite model builder usually increases performance, whereas surjectivity (and other existentially quantified formulas) tend to diminish it. Moreover, \emph{Prover9}-\emph{Mace4} performs better with operations and equalities than with non-equational formulas. For this reason it is advantageous to convert an injectivity condition from an implication into a new operation. For example, injectivity of the left multiplication map is captured by $x \backslash (x * y) = y$ (where $*$ stands for $\op$); this substitution typically yields a threefold speedup. In contrast, this conversion appears to slow down \emph{Vampire} slightly.

\begin{example}
As a case study, let us examine the model search for $\Eq{1518}\nmodels\Eq{47}$ using \emph{Mace4} (with $*$ standing for $\op$). It is now known that the smallest models have size~$15$. Initially, this search exceeded our ATPs expertise, hence we ended up constructing a theoretical model of size~$232$. If we restrict to the original formulas $\Eq{1518}$ and $\neg\Eq{47}$, the optimal configuration is (2,3,N), taking $2.5$ minutes to exhaust size~$10$ and already exceeding $7$~hours to exhaust size~$11$. Putting $S(x):=x\op x$, since $\Eq{1518}$ is $x = (y \op y) \op  (x \op  (y \op  x))$ we observe that $L_{S(y)}$ is surjective, hence injective. Accordingly we add \verb|(y*y)*x = (y*y)*z -> x = z| to \emph{Mace4} and then size 11 is exhausted in less than $5$ minutes. In addition, from the implication graph we observe that $\Eq{1518}$ implies $\Eq{359}$, which states $S(x) \op x = S(x)$; when added as \verb|(x*x)*x = x*x|, \emph{Mace4} exhausts size 11 in under 1s, with the optimal configuration shifting to (2,1,N). Size 12 is exhausted in 18s. We also note that if $S(x)=S(y)$ then $S(x)\op x = S(x) = S(y) = S(y)\op y$, yielding $x=y$ by injectivity of $L_{S(x)}$. Therefore $S$ is injective, we can add \verb|x*x = y*y -> x = y|, and exhaust size 12 in 13.5s. Also, in $\Eq{1518}$ we can substitute $x\mapsto S(y)\op x$ and cancel $S(y)$ on the left to get $\Eq{320858}$, with which we exhaust size 12 in under $2$s and size 13 in under $1$ minute. Moreover, as $S$ is surjective every element is a square, so the injectivity of $L_{S(x)}$ implies that of $L_x$, and we can add \verb|x*y = x*z -> y = z| to the search. But this actually worsens the performance! After some experimentation, we observe that we should either add the injectivity of $L_{S(x)}$ or that of $L_x$, but not both. It turns out that for size 13, injectivity of $L_{S(x)}$ performs slightly better, so we retain it. Additionally, the implication formula for this injectivity can be replaced with a new operation~\texttt{\textbackslash} by writing \texttt{x\textbackslash((x*x)*y) = y}. This further reduces the exhausting time for size 13 to $33$s. Now size 14 is exhausted in 30 minutes, and size 15 is well within reach.
\end{example}

\subsubsection{Discriminators for Isofilter}
\emph{Mace4}'s output may be redundant, in that many of the models found in a search may (and often will) be isomorphic to each other. To solve this problem, \emph{Mace4}'s output can be fed to another tool called \emph{Isofilter}, which returns a representative model of each isomorphism class. \emph{Isofilter} compares permutations of the models, restricted according to some discriminators: by default the only one used is the frequency with which each domain element appears in the operation tables\footnote{This discriminator is entirely ineffective with models having same number of copies of each element in their tables, such as quasigroups.}, but the user can include any number of other discriminators. \emph{Isofilter} ``as is'' handles models of size up to around 10 with ease; but the combinatorial explosion soon makes computations unfeasible for larger sizes, unless discriminators well suited to the problem are chosen. For example, \emph{Mace4} provides 10 models of size 15 for $\Eq{1518} \not\models \Eq{47}$, of which $6$ belong to different isomorphism classes. Vanilla \emph{Isofilter} requires 10 days and processes $3\cdot10^{13}$ permutations in order to determine these classes, while adding the discriminators \verb|x*x=x. (x*y)*z = x*(y*z). x*y=y. x*y=y*x.| reduces the task to $10^5$ permutations, completed in 0.02s.

\section{Implications for finite magmas}\label{austin-sec}

Many of the techniques used to determine the graph of implications $\E \models \E'$ can also be used to determine the graph of finite implications $\E \modelsfin \E'$, with the notable exception of the greedy construction, which appears to be inherently infinitary in nature.  On the other hand, when the magma $\Magma$ is finite, one can prove additional implications by using the fact that any function $f \colon M \to M$ which is surjective is necessarily injective, or vice versa.  We could establish about 200 new implications by providing these two axioms to \emph{Vampire} or to the \emph{Lean} package \emph{Duper}, though in the latter case some human rewriting of the proof was needed to formalize it in the base installation of \emph{Lean}.  A small number of additional implications could be resolved by more complicated facts about functions $f,g \colon M \to M$, such as the fact that $f = f \circ f \circ g$ implies $f = f \circ g \circ f$.  We refer the reader to the blueprint for examples of such arguments, which were obtained by \emph{ad hoc} methods.

In the end, we were able to establish $820$ new implications $\E \modelsfin \E'$ for which $\E \not \models \E'$; and for most other anti-implications $\E \not \models \E'$, we were able to strengthen the anti-implication to $\E \not \modelsfin \E'$.  However, there was (up to duality) precisely one finite implication which we could not settle, which we leave as an open problem:

\begin{problem}  Does the law $\x \formaleq \y \op (\x \op ((\y \op \x) \op \y))$ \eqref{eq677} imply the law $\x \formaleq ((\x \op \x) \op \x) \op \x$ \eqref{eq255} for finite magmas?
\end{problem}

This problem appears to be ``immune'' to many of our constructions, such as the linear magma construction or the magma cohomology construction; the greedy construction does show that $\Eq{677} \not \models \Eq{255}$, but the construction is inherently infinite in nature.  We tentatively conjecture that $\Eq{677} \not \modelsfin \Eq{255}$; we refer the reader to the blueprint for several partial results in this direction.

\section{Spectrum of equational laws}\label{spectrum-sec}

Given an equational law~$\Eq{n}$, one can ask for its spectrum, namely the set of cardinalities of its finite models.\footnote{The infinite spectrum is uninteresting (assuming the axiom of choice).  Let us show that if $\Eq{n}$ is not equivalent to the singleton law $\Eq{2}$ then it has models of all infinite cardinalities~$\kappa$.  The free magma on $\Eq{n}$ with $\kappa$ generators is such a model.  Indeed, the generators are distinct in this magma (otherwise $\Eq{n}$ would imply $\Eq{2}$) so its cardinal $\mu$ is at least $\kappa$.  Conversely, $\mu\leq\sum_T\kappa^{|T|}$ where the range sums over finite binary trees, and this sum is bounded above by $\aleph_0\cdot\kappa=\kappa$.}
The spectrum $\operatorname{Spec}(\Eq{n})$ is a multiplicative subset of $\mathbb{Z}_{>0}$ since the direct product of models is a model.
We focus here on the most basic question, that is, which laws (of order up to~$4$) have spectrum equal to $\mathbb{Z}_{>0}$?

Several extensions will be described in a separate publication: determining the spectrum and not only whether it is full; the spectrum of simple magmas or (sub)directly irreducible magmas; tracking multiplicity, namely counting (or bounding asymptotically) how many finite magmas exist of each size in the spectrum; this is referred to as the \emph{fine spectrum} in \cite{taylor}.  These detailed considerations reveal profound differences in how much an equational law constrains the magma operation, and may help organize the implication graph into different families.

An ATP run shows that $\num{1558}$ laws of order up to~$4$ have no model of size~$2$ and $\num{62}$ have a model of size~$2$ but none of size~$3$.  Pushing the search to higher model sizes does not resolve the question for any of the remaining $\num{3074}$ laws.  As we explain next, all of these laws actually have full spectrum.\footnote{Further investigations show that the lowest-numbered law with models of sizes $2$ and $3$ but not full spectrum is $\Eq{80887}$, namely $\x \formaleq \y \op (\y \op (\y \op (((\y \op \y) \op \x) \op \y)))$, of order~$6$.}

Our main tool by far to show that a law has full spectrum is to consider the carrier set $\mathbb{Z}/n\mathbb{Z}$, with a linear operation $x\op y = ax+by$ with $a,b\in\{-1,0,1\}$.  If the law holds for some choice of $a,b$ then the law has full spectrum.
\begin{itemize}
\item For $(a,b)=(0,0)$ the operation is the constant operation, which is a model of $\Eq{1}$ and any law whose sides both have positive order.  Equivalently, these laws are consequences of the constant law $\Eq{46}$.
\item For $(a,b)=(1,0)$ the operation is a projection, which is a model of any law whose sides start with the same first variable, equivalently the consequences of $\Eq{4}$.  (The choice $(a,b)=(-1,0)$ is a model of fewer laws hence is not useful.)  Likewise $(a,b)=(0,1)$ shows that laws whose sides end with the same last variable (consequences of $\Eq{5}$) have full spectrum.
\item For $(a,b)=(1,-1)$ the operation is abelian group subtraction, characterized by Tarski's axiom $\Eq{543}$, which shows that any law implied by $\Eq{543}$ has full spectrum.  Likewise, backwards subtraction $(a,b)=(-1,1)$ provides models for $\Eq{1090}$ (equivalent to the dual of $\Eq{543}$) and its consequences.
\item The operation for $(a,b)=(1,1)$ is abelian group addition, which is useless as any law it satisfies for all~$n$ is also satisfied by the constant operation $(a,b)=(0,0)$.
\item Finally, the operation $(a,b)=(-1,-1)$ is a model of some more laws, such as the semi-symmetric quasigroup law $\Eq{14}$ and totally symmetric quasigroup law $\Eq{492}$.
\end{itemize}
These considerations account for $\num{3068}$ laws, and there remain three dual pairs of laws to treat.  This is done through ad hoc models: a piecewise linear model for $\Eq{1682}$ and its dual, and models whose operation table is mostly constant for the remaining laws $\Eq{1482}$, $\Eq{1523}$, and their duals.

Since a law implied by a full spectrum law has full spectrum itself, the implication graph reduces significantly the number of laws for which it is useful to formalize the full spectrum property.  Accounting for duality and implications, we found it sufficient to formalize the proof that $32$ laws have no magma of size~$2$ or none of size~$3$, and the explicit construction of magmas of all finite sizes for the $7$~laws $\Eq{4}$, $\Eq{41}$, $\Eq{492}$, $\Eq{543}$, $\Eq{1482}$, $\Eq{1523}$, and $\Eq{1682}$.  In conclusion, we prove that $\num{3074}$ laws ($65\%$) have full spectrum $\operatorname{Spec}(\Eq{n})=\mathbb{Z}_{>0}$ and $\num{1620}$ ($35\%$) do not (including $\num{1496}$ laws equivalent to $\Eq{2}$).  These percentages remain roughly stable at higher orders, with $60\%$ of laws of order up to $9$ having full spectrum, as will be reported elsewhere.

\section{Higman--Neumann laws}\label{higman-neumann}

\subsection{Describing groups as magmas}

The ETP is focused exclusively on magmas, which only feature a single (binary) operation.  Many mathematical structures traditionally defined using several operations can nevertheless be fully described as magmas with a well-chosen combined operation, from which the whole structure can be reconstructed.  The first example is how Boolean algebras defined in terms of three operations $(\land,\lor,\lnot)$ were equivalently described in 1913 in terms of the Sheffer stroke $x\op y\coloneqq \lnot(x\land y)$~\cite{sheffer}.  Once such a single operation is found, a separate endeavor is to determine which laws it must satisfy to get the desired structure, and, in favorable cases find a single law that encapsulates the whole structure, or even find all equivalent laws of minimum order.  The earliest such example is Tarski's description of abelian groups in terms of subtraction $x\op y\coloneqq x+(-y)$, subject to a single axiom $\x \formaleq \y \op (\z \op (\x \op (\y \op \z)))$ \eqref{eq543}, found in 1938~\cite{Tarski1938}.  It then took three decades~\cite{higman-neumann,Sholander01021959,Padmanabhan_1969} to sort out the full equivalence class of $\Eq{543}$ among laws of order~$4$.  For Boolean algebras, a minimum-order single-law description was only found in~\cite{mccune_et_al}, nine decades after Sheffer's work.

We plan to report elsewhere on other examples such as modules over Eisenstein integers $\mathbb{Z}[\omega_3]$ or Gaussian integers $\mathbb{Z}[\omega_4]$, with $\omega_k$~a primitive $k$th root of unity, which can be described by the operation $x \op y \coloneqq x + \omega_k y$ subject to the order-$6$ laws $\Eq{85914}$ and~$\Eq{86082}$, respectively.

Here, we describe the case of groups.  The binary operation~$*$, unary operation~$(\cdot )^{-1}$, and nullary operation~$e$ (identity element) can be repackaged into a single division operation $x \op y \coloneqq x*y^{-1}$, from which the original operations are easily reconstructed: for instance $x*y=x\op((y\op y)\op y)$.  A group equipped with division, called a Ward quasigroup, is a magma $(G, \op)$ satisfying the unipotence law $\x \op \x \formaleq \y \op \y$ \eqref{eq40}, the right-unit squares law $\x \formaleq \x \op (\y \op \y)$ \eqref{eq11}, and a version of the associativity law dubbed the half-group law, $\x \op \y \formaleq (\x \op \z) \op (\y \op \z)$ \eqref{eq3737}, from which group axioms are easily derived.
These three laws are equivalent to a single law $\E_{\mathrm{HN}}\coloneqq \Eq{42323216}$ of order~$8$, found by Higman and Neumann~\cite{higman-neumann},
\[
\E_{\mathrm{HN}} \colon \x \formaleq \y \op \Bigl(\bigl(((\y \op \y) \op \x) \op \z\bigr) \op \bigl(((\y \op \y) \op \y) \op \z\bigr)\Bigr).
\]
McCune found two more laws equivalent to this one and of the same order \cite{mccune1993single}, $\Eq{42302852}$ and $\Eq{147976245}$.  A natural question is to find all characterizations of Ward quasigroups (groups equipped with division) with minimum order.
Throughout our exploration, we used two criteria: the law must be satisfied by group division, and must fail for magmas that are not Ward quasigroups.

\subsection{Basic constraints}

There are $\num{298012537}$ laws of order up to~$8$, and running an ATP on all of them is too slow, so one needs efficient ways to filter them beforehand.  Let us begin with restrictions on the shape of any law equivalent to~$\E_{\mathrm{HN}}$.
The law must take the form $\x\formaleq\dots$ as otherwise it would be satisfied by the constant operation on any set.
The law must be satisfied when evaluated with all variables set to the same element (say,~$1$) in the Ward quasigroup $\mathbb{Z}$ equipped with subtraction.  In particular the law must have even order.
This reduces from $\num{3470}$ shapes of order up to~$8$, down to just $548$~shapes.

Next come some restrictions on the variables.
The right-hand side must not start nor end with the variable~$\x$, as otherwise the projection operations $x\op y \coloneqq x$, $x\op y\coloneqq y$ respectively would satisfy the law.
The law must have at least three variables: otherwise it is satisfied by division in any diassociative loop (such as a Moufang loop), namely a quasigroup with identity element in which every $2$-generated submagma is a group.
Each variable must appear an even number of times, so that the law holds in Boolean groups (abelian groups of exponent~$2$).
These basic constraints leave $54$, $\num{9000}$, and $\num{1841910}$~candidate laws of orders $4$, $6$, and~$8$, respectively, which can be efficiently enumerated since the conditions so far constrain separately the shape and list of variables (refer to \Cref{numbering-app} for the relevant definitions).

Imposing further that the law is satisfied by division in a free non-abelian group (with one generator per variable) reduces these numbers of laws to $0$, $59$, and $\num{5692}$ at these same orders.  All of the laws coming out of these filters are consequences of~$\E_{\mathrm{HN}}$; accordingly, one must determine which of these candidates imply said law.

\subsection{Using automated theorem provers}

We repeatedly whittled down the list of candidates by accumulating a collection of finite countermodels, namely magmas that satisfy a candidate law while violating one of the laws $\Eq{11}$, $\Eq{40}$ and~$\Eq{3737}$ characterizing Ward quasigroups.  Automated searches of small magmas (of size up to~$8$) with \emph{Mace4} or \emph{Vampire} gave many countermodels (of which $12$ are enough).  A second source was that of linear models $x \op y \coloneqq ax+by$ on $\mathbb{Z}/n\mathbb{Z}$ with $(a,b)\neq(1,n-1)$: the largest one we used is $x \op y \coloneqq 261x + 33y \bmod 307$ to rule out the candidate law $\Eq{68185620}$, $\x \formaleq (\y \op \y) \op (\y \op ((\x \op (\z \op \y)) \op ((\x \op \x) \op \z)))$.  Finally, we introduced some models that are ``almost'' Ward quasigroups: the $7$-element smallest non-associative inverse loop (equipped with division), the $10$-element smallest non-associative Steiner loop (commutative loop in which divisions coincide with multiplication), and the $16$-element Moufang loop of unit octonions over~$\mathbb{Z}$.

These steps eliminated all candidates of order less than~$8$,\footnote{In particular we recover the nonexistence of laws of order~$6$ characterizing Ward quasigroups, already announced by McCune and Kinyon~\cite{mccune-webpage-gtsax}.} and left only $213$ laws of order~$8$ that could be equivalent to~$\E_{\mathrm{HN}}$.  These laws come in $31$ families consisting of a ``parent'' $5$-variable law and some specializations with pairs of variables being identified.  The lowest-numbered law in this list is McCune's law
\[
\x \formaleq \y \op \Bigl(\bigl(((\x \op \x) \op \x) \op \z\bigr) \op \bigl(((\x \op \x) \op \y) \op \z\bigr)\Bigr) \ \eqref{eq42302852},
\]
which is in the same family as the Higman--Neumann law.  Another common feature is that all $213$ candidate laws include at least one subexpression of the form $v \op v$ for some variable~$v$.

For $179$ candidate laws~$\E$, we showed the implication $\E\models\E_{\mathrm{HN}}$ using the ATP \emph{Prover9}.  For equations of this order, the ATP computation times increase significantly compared to order-$4$ laws, with some proofs taking $20$~times longer than checking with \emph{Prover9} all $\num{8178279}$ positive implications of the main project.  The choices of parameters bounding the ATP search (such as the parameter \texttt{max\_weight} limiting clause complexity in \emph{Prover9}) were particularly crucial, with different values being optimal in different proofs.  Another important speed-up was obtained by seeking proofs of a simple property such as $\Eq{11}$, $\Eq{40}$, $\Eq{3737}$, or injectivity/surjectivity of left or right multiplications, then seeking proofs that the candidate law together with that property implies some other property, and so on, until proving all three laws characterizing Ward quasigroups.  The reverse approach also proved useful, namely finding which property would allow the proof to succeed, then seeking a proof of that property from the candidate law.  The law $\Eq{102744082}$ was a particularly difficult instance: together with injectivity of right multiplications it easily implies the Higman--Neumann law, but the proof that $\Eq{102744082}$ does imply injectivity took several hours to obtain in a sweeping search with general parameters; an optimized choice of \emph{Prover9} options trims this time down to $0.2$~seconds.  The two characterizations $\Eq{42302946}$ and~$\Eq{89176740}$ of division in groups deserve particular mention for being nicely expressed in terms of the right-cubing map $C(x) \coloneqq (x \op x) \op x$:
\[
  \x \formaleq \y \op ((C(\x) \op \z) \op (C(\y) \op \z)), \qquad
  \x \formaleq C(\y) \op ((C(\x) \op \z) \op (\y \op \z)).
\]

Among the $34$ remaining candidates, we showed the finite implication $\E\modelsfin\E_{\mathrm{HN}}$ for $21$~laws~$\E$, which means that the law $\E$ characterizes Ward quasigroups among finite magmas.  Let us illustrate the proof technique for $\x \formaleq (\y \op \y) \op (\y \op ((\x \op \z) \op (((\x \op \x) \op \y) \op \z)))$ \eqref{eq67953597}.  In a finite magma, one gets $x = L_{y\op y} \circ L_y \circ f_{y,z}(x)$ in terms of the function $f_{y,z}\colon x \mapsto (x\op z) \op (((x \op x) \op y) \op z)$.  Finiteness implies that the composition of several functions can only be a bijection if all of them are bijections, thus left multiplications are bijective.  By selecting $y=L_{x\op x}^{-1}(w)$ one gets that $(x\op z)\op (w\op z)$ equals the $z$-independent expression $L_y^{-1} \circ L_{y\op y}^{-1}(x)$.  Taking $w=x$ yields that the square of $L_x(z)$ is $z$-independent, hence (by surjectivity of~$L_x$) all squares are equal.  A routine ATP run then concludes.
While the resulting proofs of finite implications are relatively short and have been successfully ported to \emph{Lean}, our automated search involved thousands of \emph{Vampire} runs.
Indeed, rather than the condition that a bijective composition implies bijectivity of its constituents, we had to use the more concrete property that injectivity is equivalent to surjectivity for various collections of specific functions $f\colon M\to M$ such as left or right multiplications, cubing, etc.\@, with a brute-force search over which functions to include in a given run.

The remaining $13$~candidates have proven to be quite resistant to both proof and countermodel attacks with a wide range of parameter options, target clauses, additional clauses, and given time (up to 10 hours for each candidate and given experiment).  We have shown with \emph{Prover9} that a model of any of the remaining candidates~$\E$ that has a right-identity element, or that satisfies the right-unit squares law~$\Eq{11}$ or unipotence law~$\Eq{40}$, is a Ward quasigroup.  As such, any putative countermodel to the implication $\E\models\E_{\mathrm{HN}}$ must be far from being a Ward quasigroup, in the sense that it must violate these laws.\footnote{In fact, we find that countermodels must violate many laws satisfied by group division: $\Eq{11}$, $\Eq{40}$, $\Eq{823}$, $\Eq{835}$, $\Eq{842}$, $\Eq{846}$, $\Eq{1049}$, $\Eq{1637}$, $\Eq{1673}$, $\Eq{1718}$, $\Eq{1835}$, $\Eq{1876}$, $\Eq{3282}$, $\Eq{3323}$, $\Eq{3662}$, $\Eq{3665}$, $\Eq{3677}$, $\Eq{3684}$, $\Eq{3721}$, $\Eq{3729}$, $\Eq{3737}$, $\Eq{3761}$, $\Eq{3823}$, $\Eq{3870}$, $\Eq{3891}$, $\Eq{3943}$, $\Eq{4270}$, $\Eq{4590}$ of order up to~$4$, and many more of higher order, including the associative right division law $\Eq{912704}$, $\x\op(\y\op(\z\op\w)) \formaleq(\x\op(\w\op\z))\op\y$.}  In particular it cannot be a loop; this discards, e.g., the $32$-element loop of unit sedenions over~$\mathbb{Z}$ as a potential countermodel.

In summary, out of the $\num{298012537}$ laws of order up to~$8$, we found $179$ laws characterizing Ward quasigroups, $21$~characterizing them among finite magmas but perhaps not infinite ones, and $13$~candidates for which we have neither a counterexample nor a proof even for finite magmas. No efforts have been made yet to find infinite countermodels with the techniques developed in the main project. The results of this section\footnote{\url{https://github.com/teorth/equational_theories/blob/main/data/Higman-Neumann.json}} have not been formalized in \emph{Lean} yet. More details and further progress on this side project will be published elsewhere.

\section{AI and Machine Learning contributions}\label{ml-sec}

As discussed in \Cref{automated-sec}, the ETP made extensive use of automated theorem provers in completing the primary goal of determining and then formalizing all the implications between the specified equational laws.  In contrast, we were only able to utilize modern large language models (LLMs) in a fairly limited fashion.  Such models were useful in writing initial code for graphical user interfaces that we discuss further in \Cref{sec:gui-sec}, as well as performing some code autocompletion (using tools such as \emph{GitHub Copilot}) when formalizing an informal proof in \emph{Lean}.  In one instance, \emph{ChatGPT} was used\footnote{\url{https://chatgpt.com/share/670ce7db-8a44-800d-a5dc-8462c12eca3b}} to guess a complete rewriting system for the law $\x \op \y \formaleq \x \op ((\y \op \y) \op \z)$ \eqref{eq3523} which could then be formally verified, thus resolving all implications from this equation. However, in most of the difficult implications that resisted automated approaches, we found that LLMs did not provide useful suggestions beyond what the human participants could already propose.

On the other hand, we found that machine learning (ML) methods showed some promise of being able to heuristically predict the truth value of portions of the implication graph; we shall now discuss a convolutional neural network approach.\footnote{For some discussion of other machine learning experiments performed during the Equational Theories project, see \url{https://leanprover.zulipchat.com/\#narrow/channel/458659-Equational/topic/Machine.20learning.2C.20first.20results} for a (vectorized) transformer neural network approach, and see \url{https://leanprover.zulipchat.com/\#narrow/channel/458659-Equational/topic/Graph.20ML.3A.20Directed.20link.20prediction.20on.20the.20implication.20graph} for directed link prediction on the implication graph using Graph Neural Network (GNN) autoencoders.}

\subsection*{Convolutional neural network model for the implication graph}

To model the implication graph, we used a convolutional neural network (CNN)\@. For each pair of equations $(p,q)$, the input of the CNN consisted of a character-level tokenization (no vectorization) of the two equations, and the output of the CNN was a yes/no label depending on whether $p$ implies $q$. The CNN processed the input data using a 5-layer architecture, each layer composed of a 1-D convolution, followed by batch normalization and rectified linear unit activation functions~\cite{Goodfellow-et-al-2016}. After the last convolutional layer, a flattening layer and a softmax activation function were used to obtain the output of the network, i.e.\@, the prediction of the implication for the input pair of equations. Note that we trained our models with the 16 October 2024 version of the (infinite) implication graph, for which 362 of the hardest implications (less than 0.002\% of the total) were still unknown.

\smallskip

Prior to training the CNN, we divided the data into training (60\%), validation (20\%), and test (20\%) subsets. The CNN was implemented in TensorFlow 2.9.0~\cite{tensorflow2015-whitepaper} and trained on an NVIDIA 3080 Ti GPU with the following configuration: the binary cross-entropy as the loss function to minimize, the Adam method with an initial learning rate of $10^{-3}$ for the adjustment of network weights, a batch size of 1024 with random shuffling, a learning rate reduction by a factor of 2 after 15 epochs without improvement in the validation loss, and early stopping if no improvement occurred for 40 epochs, with the model with the lowest validation loss being retained as the final CNN\@.
The final CNN was evaluated on the test set, reaching a prediction accuracy of 99.7\%, which means that the model misclassified around 66k of the 22 million implications. Since this accuracy was somewhat surprising for such a small and “simple” model, as a control we generated a random label (yes/no) for each pair of equations, then trained the CNN on this data with 60\%/20\%/20\% training/validation/test percentages, resulting in a 49.99\% accuracy, as expected from an unbiased model.

\smallskip

It could be the case that the high accuracy of our CNN model was mostly due to it learning the transitivity of the implication relation, as opposed to it discovering patterns in the identities. To clarify this point, it was proposed to train our CNN model either on a random poset or on the equational poset with vertices and labels permuted, and check whether a similar accuracy was achieved. This experiment has not been performed yet.

\smallskip

In any case, since there are around 600k explicit implications from which the rest can be derived by transitivity (2.7\% of the total), if the CNN was learning transitivity it should perform well with a very small training dataset. Accordingly, we trained and assessed a CNN model with a 5\%/5\%/90\% training/validation/test proportion, with a resulting 99.6\% accuracy on test (the process finishing in under 20 minutes). But the high accuracy is maintained with even smaller training datasets, as evidenced in \Cref{table:prediction-accuracy}.
\begin{table}
\centering
\caption{Prediction accuracy as a function of the size of the training set}
\begin{tabular}{cc}
\toprule
  Training/validation/test proportion (\%) & Prediction accuracy (\%) \\\midrule
  60/20/20 & 99.7 \\
  5/5/90 & 99.6 \\
  1/1/98 & 99.3 \\
  0.5/0.5/99 & 98.9 \\
  0.1/0.1/99.8 & 92.2 \\
  \bottomrule
\end{tabular}
\label{table:prediction-accuracy}
\end{table}

\noindent Since these training datasets were significantly smaller than the subset of explicit implications, and were not carefully chosen from the poset extremes but taken randomly, we can conclude that even if the CNN were learning transitivity, that by itself is probably insufficient to explain the high accuracy achieved by the CNN model.

\smallskip

Since sometimes machine learning is announced as a form of data compression, let us now comment on the level of data compression achieved by our CNN model. In \Cref{table:encoding-sizes} we compare the sizes of three different encodings of the implication graph: a) As the simplest approach, we can encode the full implication graph in one file as labelled pairs of equations of the form ($p$, $q$, yes/no). b) On the other extreme, we can encode it as a bit table containing neither the explicit equation expressions nor their numbers, but just a 1/0 label for each point with coordinates ($p$,$q$), together with a table mapping each number to its corresponding equation expression and a small script to recover the file in a). c) Finally, we can encode it in the complete files of the CNN model produced by TensorFlow 2.9.0. In addition, we can either consider these files in their raw form, or we can highly (but losslessly) compress them to achieve a rough comparison of their actual information content; accordingly, in \Cref{table:encoding-sizes} we also include the sizes of the encoding models when compressed with 7-zip LZMA2 ultra compression with a 1536MB dictionary size and a 273 word size.

\begin{table}
\centering
\caption{Sizes of different encodings for the implication graph}
\begin{tabular}{ccc}
  \toprule
  Encoding model & Uncompressed size & Compressed size \\\midrule
  Labelled pairs of equations & \SI{1.5}{\giga\byte} & \SI{9}{\mega\byte} \\
  Bit table & \SI{42}{\mega\byte} & \SI{40}{\kilo\byte} \\
  CNN (99.7\% accuracy) & \SI{1.34}{\mega\byte} & \SI{700}{\kilo\byte} \\
  \bottomrule
\end{tabular}
\label{table:encoding-sizes}
\end{table}

\smallskip

\noindent As we see, the information in the CNN model is more than 13 times less than in the labelled pairs model, while it is more than 18 times that of the bit table model. We also note that the CNN model in its raw form is already quite incompressible, and more than 30 times smaller than the raw bit table.

\smallskip

Lastly, note that our CNN model does not only encode the implication graph up to order 4 (with 0.03\% of noise), but a priori may also be able to predict it for higher orders with a significant accuracy. Thus it could be used to guide and speed up the determination of the implication graph up to order 5, by letting ATPs focus first on the CNN’s predicted status of the studied implication.

\section{User Interfaces}\label{sec:gui-sec}

A number of custom web applications were developed as part of the ETP. While many past Lean formalization projects have primarily relied on the Lean blueprint tool to organize tasks and track progress, the large volume of (transitive) implications tracked by the ETP, along with the research-oriented nature of the project, necessitated the development of custom tools to complement the blueprint tool. These web applications also made information more accessible to project participants and other interested parties, including those unfamiliar with Lean or the custom software developed for the project. The project features four primary interfaces:

\begin{enumerate}
  \item The \textbf{ETP dashboard}\footnote{\url{https://teorth.github.io/equational_theories/dashboard/}} displays the high-level overview of the project: the total number of resolved, conjectured, and unknown implications for the general and finite implication graphs. The dashboard also includes links to other tools, data, and visualizations about the implication graphs.
  \item The \textbf{Equation Explorer}\footnote{\url{https://teorth.github.io/equational_theories/implications/}} is the primary tool to navigate the implication graph. For a given equation, it displays its inbound and outbound implications, as well as other members of its equivalence class. The explorer allows navigating either the general or finite implication graphs. The explorer also features custom commentary for a given equation (when available), serving as a repository for information and links. It also links to Graphiti visualizations and an example of its smallest satisfying magma, if one exists. \Cref{fig:screenshot-equation-explorer} shows an example view of the explorer.
  \item \textbf{Graphiti}\footnote{\url{https://teorth.github.io/equational_theories/graphiti/}} visualizes the implication graph as a Hasse diagram, where downward edges represent subset relationships, and upward edges represent implications. Equivalence classes are collapsed into single nodes for clarity. Graphiti supports search parameters to visualize specific subsets of the graph. It can also display the entire implication graph, though the complete graph is large and challenging to navigate. \Cref{fig:854-like} is an example of a Graphiti visualization.
  \item The \textbf{Finite Magma Explorer}\footnote{\url{https://teorth.github.io/equational_theories/fme/}} tests which equations a given finite magma satisfies or fails to satisfy. Users input finite magmas as Cayley tables. The tool is aware of the finite implication graph, so if an input magma witnesses an unknown refutation, it notifies the user and provides instructions for contributing it to the GitHub repository.
\end{enumerate}

\begin{figure}
  \centering
  \includegraphics[width=1.0\textwidth]{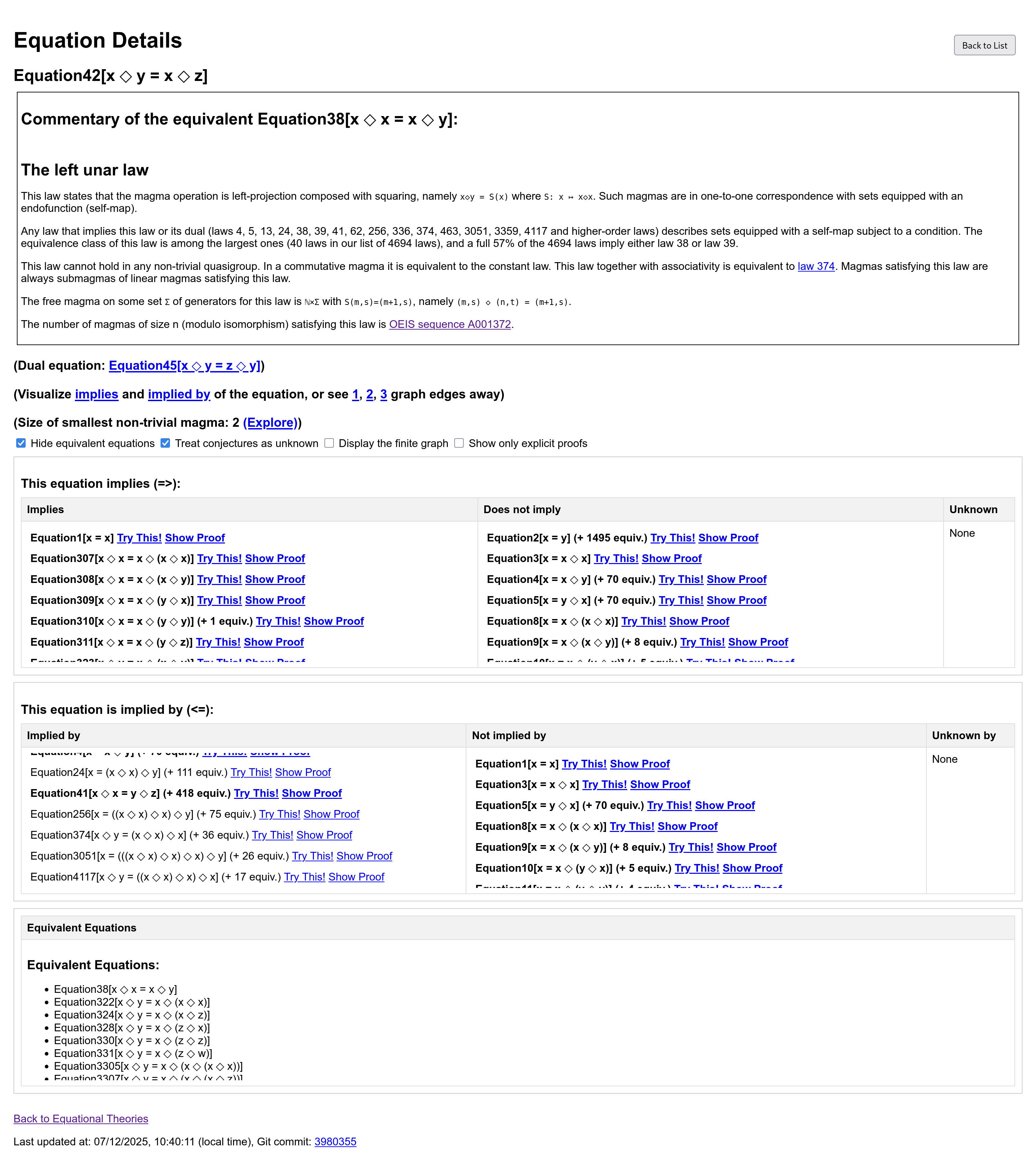}
  \caption{An example of the information displayed by the Equation Explorer for a specific equation.}
  \label{fig:screenshot-equation-explorer}
\end{figure}

The data for these tools is extracted directly from the Lean-formalized proofs in the project's GitHub repository, ensuring it always faithfully reflects the current state of progress. Additionally, the data is automatically updated with each code change using continuous integration (CI), eliminating the need for manual updates.

\section{Data management}

All the data and formalizations generated by the project are placed in the GitHub repository\footnote{\url{https://github.com/teorth/equational_theories}}, while the discussion is almost entirely contained in a dedicated channel on the Lean Zulip\footnote{\url{https://leanprover.zulipchat.com/\#narrow/channel/458659-Equational}}.  The implication graph can be downloaded from Equation Explorer\footnote{\url{https://teorth.github.io/equational_theories/implications/}}, and can also indicate the individual \emph{Lean} theorems required to establish or refute any given implication, although currently we have only formalized a generating set of implications and refutations in \emph{Lean}, rather than the entirety of the implication graph.

\section{Conclusions and future directions}

This project successfully demonstrated that large-scale explorations of a space of mathematical statements (in this case, the implications or non-implications between selected equational laws) can be crowdsourced using modern collaboration platforms and proof assistants.  No single tool or method was able to study the entirety of this space, and many informal proofs generated contained non-trivial errors; but there were multiple techniques that could treat significant portions of the space, and through a collaborative effort combined with the proof validation provided by \emph{Lean}, one could synthesize these partial and fallible contributions into a complete and validated description of the entire implication graph.  While this particular graph was a comparatively simple structure to analyze, we believe that this paradigm could also serve as a model for future projects devoted to exploring more sophisticated large-scale mathematical structures.

Several factors appeared to be helpful in ensuring the success of the project, including the following:
\begin{itemize}
\item \textbf{A clearly stated primary goal, with an end condition and precise numerical metrics to measure partial completion.}  From the outset, there was a specific goal to attain, namely to completely determine and then formalize the implication graph on the original set of $4694$ laws.  Progress towards that goal could be measured by a number of metrics, such as the number of implications that were conjectured but unformalized, or not conjectured at all.   Such metrics allowed participants to see how partial contributions, such as formalizing a certain subset of implications, advanced the project directly towards its primary goal.  This is not to say that all activity was devoted solely towards this primary goal, but it did provide a coherent focus to help guide and motivate other secondary activities.
\item \textbf{A highly modular project}.  It was possible for any given coauthor to work on a small subset of implications and focus on a single proof technique, without needing to understand or rely upon other contributions to the project.  This allowed the work to be both parallelized and decentralized; many contributors launched their own investigations broadly within the framework of the project, without needing centralized approval or coordination.
\item \textbf{Low levels of required mathematical and formal prerequisites}.  The problems considered in the project did not require advanced mathematical knowledge (beyond a general familiarity with abstract algebra), nor a sophisticated understanding of formal proof assistants.  This permitted contributions from a broad spectrum of participants, including those without a graduate mathematical training, as well as mathematicians with no experience in proof formalization.  At a technical level, it also meant that formalization of proofs into \emph{Lean} could be done immediately once certain base definitions (such as \texttt{Magma}) were constructed.  This can be compared for instance with the recent formalization of the Polynomial Freiman--Ruzsa conjecture~\cite{PFR_Tao_Dilles_2023}, in which significant effort was expended in the first few days to settle on a suitable framework to formalize the mathematics of Shannon entropy.  While some more sophisticated formal structures (such as the syntactic description of laws as pairs of words in a \texttt{FreeMagma}) were later introduced in the project, it was relatively straightforward to refactor previously written code to be compatible with these structures as they were incorporated into the project.
\item \textbf{Variable levels of difficulty, and the amenability to partial progress.}  Traditional mathematics projects generally involve a small number of extremely hard problems, with incomplete progress on these problems being difficult to convert into clean partial results.  In contrast, the ETP studied a large number of problems with a very broad range of difficulty, so that even if a given proof strategy did not work for a given implication, it could be the case that there was some class of easier implications for which the strategy was successful.  This allowed for a means to validate such ideas, and allowed the project to build up a useful and diverse toolbox of proof techniques which became increasingly necessary to handle the final and most difficult implications in the project.  It also created a dynamic in which the project initially focused on easy techniques to resolve a significant fraction of the implications, gradually transitioning into more sophisticated methods that focused on a much smaller number of outstanding implications that had proven resistant (or even ``immune'') to all easier approaches.
\item \textbf{Centralized and standardized platforms for discussion, project management, and validation.}  While the project was decentralized at the level of the participant, there was a centralized location (a channel\footnote{\url{https://leanprover.zulipchat.com/\#narrow/channel/458659-Equational}} on the Lean Zulip) to discuss all aspects of the project, as well as a centralized repository\footnote{\url{https://github.com/teorth/equational_theories}} to track all contributions and outstanding issues, a centralized blueprint\footnote{\url{https://teorth.github.io/equational_theories/blueprint/}} to describe technical details of proofs to be formalized, and a single formal language (\emph{Lean}) to validate all contributions. A significant portion of the activity in the early stages of the project was devoted to setting out the standards and workflows for handling both the discussion and the contributions, in particular setting up a contributions page\footnote{\url{https://github.com/teorth/equational_theories/blob/main/CONTRIBUTING.md}} and adopting a code of conduct\footnote{\url{https://github.com/teorth/equational_theories/blob/main/CODE_OF_CONDUCT.md}}.  This gave some structure and predictability to what might otherwise be a chaotic effort.
\item \textbf{Development of custom visualization tools.}  As discussed in \Cref{sec:gui-sec}, several tools were developed (in part with AI assistance) to help visualize and navigate the implication graph while it was in a partial stage of development, allowing for participants to independently identify problems to work on, and to validate and use the contributions of other participants even before they were fully formalized.  For instance, a participant could propose a finite counterexample to an implication by posting a link to the magma in \emph{Finite Magma Explorer}, allowing for immediate validation of the counterexample, or use \emph{Equation Explorer} or \emph{Graphiti} to observe some interesting phenomenon in the implication graph that other participants could reproduce and study.
\item \textbf{Applicability of existing software tools.}  As described in \Cref{automated-sec}, many of the implications in the ETP were amenable to application of ``off-the-shelf'' automated theorem provers (ATPs); while some trial and error was needed to determine good choices of parameters, these tools could largely be applied directly to the project without extensive customization.  (However, the later transcription of ATP output into Lean was sometimes non-trivial.)
\item \textbf{Receptiveness to new techniques and tools.}  Crucially, the methods used to make progress on the project were not specified in advance, and contributions from participants with new ideas, techniques, or software tools that were not initially anticipated were welcomed.  For instance, the theory of canonizers (\Cref{canon-sec}) was not initially known to the first project participants, but was brought to the attention of the project by a later contributor.  Conversely, while there were hopes expressed early in the project that modern large language models (LLMs) could automatically generate many of the proofs required, it turned out in practice that other forms of automation, particularly ATPs, were significantly more effective at this task (at least if one restricted to publicly available LLMs), and the project largely moved away from the use of such LLMs (other than to help create the code for the visualization tools).
\looseness=-1
\end{itemize}

There are several mathematical and computational questions that could potentially be addressed in future work building upon the outcomes of ETP\@.
Here is a list of some possible such future directions.
\begin{enumerate}
  \item Does the law $\x \formaleq \y \op (\x \op ((\y \op \x) \op \y))$ \eqref{eq677} imply $\x \formaleq ((\x \op \x) \op \x) \op \x$ \eqref{eq255} for finite magmas, i.e., $\Eq{677} \modelsfin \Eq{255}$? This is the last remaining implication (up to duality) for finite magmas to be resolved.  A number of partial results on this problem may be found at \url{https://teorth.github.io/equational_theories/blueprint/677-chapter.html}.

  \item The ETP focused on determining relations $\E \models \E'$ between one law and another.  Could the same methods also systematically determine more complex logical relations, such as $\E_1 \wedge \E_2 \models \E_3$, for all laws $\E_1,\E_2,\E_3$ in a specified set?  This includes the question of implications between equational laws in semigroups (associative magmas).  One could also consider implications involving magma properties that are not equational laws, such as cancellability or existence of a unit element.

  \item Call an implication $\E_1 \models \E_2$ ``irreducible'' if there is no equational law $\E$ with $\E_1 \models \E \models \E_2$, other than those laws equivalent to either $\E_1$ or $\E_2$.  For instance, $\Eq{2} \models \Eq{4}$ is irreducible, since $\Eq{4}$ implies any law of the form $w \formaleq w'$ where the left-most variable of $w$ matches the left-most variable of $w'$. On the other hand, $\Eq{4}$ in conjunction with any law not of that form yields $\Eq{2}$.  Similar \emph{ad hoc} arguments can produce other irreducible implications, e.g., $\Eq{2} \models \Eq{n}$ for $n = 5, 895, 26302$.  Could one replicate the ETP to classify all stable implications among the same \num{4694} equations studied in this project?

  \item For a given finite non-implication $\E_1 \nmodelsfin \E_2$, are there bounds on the proportion of variable assignments for which~$\E_2$ holds, similarly to how in a finite group either all elements square to the neutral element, or at most $3/4$ of them do?
\end{enumerate}
Some other directions do not concern implications between laws, but may benefit from data generated by the ETP\@.
\begin{enumerate}[resume*]
  \item Does the law $\x \formaleq \y \op (\y \op (\y \op (\x \op (\z \op \y))))$ \eqref{eq5093} have any infinite models? In \cite{Kisielewicz2} it was shown that it has no non-trivial finite models, but the infinite model case was left as an open question.  A partial classification of laws of order~$5$ with infinite models but no finite models is given at \url{https://teorth.github.io/equational_theories/blueprint/order-5-austin-laws.html}.

  \item A key feature of finite magmas $\Magma$ is that they are surjunctive, in the sense that any definable map from $M$ to itself that is injective, is also surjective (or vice versa), where ``definable'' is with respect to the language of magmas.  Are there equational theories that admit surjunctive models, but yet do not have any non-trivial finite models?

  \item Are all finite weak central groupoids, namely magmas obeying $\x \formaleq (\y \op \x) \op (\x \op (\z \op \y))$ \eqref{eq1485}, necessarily of size $n^2$ or $2n^2$?  More generally, what is the spectrum of each law or conjunction of laws, and what are the possible asymptotics for the fine spectrum in terms of model size?

  \item How ``stable'' is a given law~$\E$?  For instance, if a finite magma satisfies a law~$\E$ some proportion $1-\eps$ of the time, with $\eps$~small, can the magma be perturbed into one that satisfies~$\E$ exactly?  Related to this is the question of whether a law~$\E$ is ``rigid'' or ``mutable'': is it possible to add an element or to make a small number of modifications to a magma satisfying~$\E$, in a way that still preserves~$\E$?  Such properties helped suggest whether certain magma construction techniques, such as modifying a base magma, were likely to be successful.

  \item For each law, can its free magma with one or more generators be described explicitly?

  \item Which laws admit an interesting theory of smooth magmas, analogous to Lie groups?
\end{enumerate}

\subsection{Miscellaneous remarks}

It is possible that the timing in which certain proof methods were introduced into the project created some opportunity costs.  For instance, by deploying automated theorem provers at an early stage, we might have settled some implications that had more interesting human-readable proofs that we missed.  Similarly, we developed some sophisticated theory for the equation $\Eq{854}$, such as \Cref{unique-factorization}, that is now superseded by finite counterexamples; but had the finite counterexamples been discovered first, we would not have found the theoretical arguments.  It may be productive for future work to revisit some portions of the implication graph and locate alternate proofs and methods.

\section*{Acknowledgments}

We are grateful to the many additional participants in the Equational Theories Project for their
numerous comments and encouragement, with particular thanks to Stanley Burris, Edward van de Meent and David Michael Roberts. We warmly thank Michael Kinyon for generously sharing his expertise with \emph{Prover9-Mace4}, and we are likewise grateful to Laura Kovács, Márton Hajdu, Martin Suda, and Michael Rawson of the \emph{Vampire} development team for their helpful explanations regarding \emph{Vampire}'s options and functionality. We gratefully acknowledge the computational resources provided by the SINGACOM research group at IMUVa, the Mathematics Research Institute of the University of Valladolid. Additionally, we note that Shreyas Srinivas is a doctoral student at the Saarbr\"{u}cken Graduate School for Computer Science.

\appendix
\crefalias{section}{appendix}
\raggedbottom
\section{Numbering system}\label{numbering-app}

In this section we record the numbering conventions we use for equational laws.

For this formal definition we use the natural numbers $0,1,2,\dots$ to represent and order indeterminate variables; however, in the main text, we use the symbols $\x,\y,\z,\w,\uu,\vv,\mathrm{r},\mathrm{s},\mathrm{t}$ instead (and do not consider any laws with more than eight variables).

To define the ordering we use on equational laws, we first consider the case where there is a single indeterminate $\ast$.
We place a well-ordering on words $w,w'$ with a single indeterminate $\ast$ by declaring $w > w'$ if one of the following holds:
\begin{itemize}
    \item $w$ has a larger order than $w'$.
    \item $w = w_1 \op w_2$ and $w' = w'_1 \op w'_2$ have the same order $n \geq 1$ with $w_1 > w'_1$.
    \item $w = w_1 \op w_2$ and $w' = w'_1 \op w'_2$ have the same order $n \geq 1$ with $w_1 = w'_1$ and $w_2 > w'_2$.
\end{itemize}
Thus
\begin{align*}
  \ast < \ast \op \ast &< \ast \op (\ast \op \ast) < (\ast \op \ast) \op \ast \\
  & < \ast \op (\ast \op (\ast \op \ast)) < \ast \op ((\ast \op \ast) \op \ast) < \dots
\end{align*}

We similarly place a well-ordering on equational laws $w_1 \formaleq w_2$ with a single indeterminate $\ast$ by declaring $w_1 \formaleq w_2 > w'_1 \formaleq w'_2$ if one of the following holds:
\begin{itemize}
\item  $w_1 \formaleq w_2$ has a larger order than $w'_1 \formaleq w'_2$.
\item If $w_1 \formaleq w_2$ has the same order as $w'_1 \formaleq w'_2$, and $w_1 > w'_1$.
\item If $w_1 \formaleq w_2$ has the same order as $w'_1 \formaleq w'_2$, $w_1 = w'_1$, and $w_2 > w'_2$.
\end{itemize}
Thus for instance
$$ (\ast \op \ast \formaleq \ast \op (\ast \op \ast)) < (\ast \op \ast \formaleq (\ast \op \ast) \op \ast).$$

Finally for equational laws with alphabet $\x,\y,\z,\w,\uu,\vv,\mathrm{r},\mathrm{s},\mathrm{t}$, define the \emph{shape} of that law to be the law formed by replacing all indeterminates with $\ast$; for instance, the shape of $\x \op (\y \op \z) = (\x \op \y) \op \z$ \eqref{eq4512}, is $\ast \op (\ast \op \ast) \formaleq (\ast \op \ast) \op \ast$.  We then place a well-ordering $w_1 \formaleq w_2$ with indeterminates $\x,\y,\z,\w,\uu,\vv,\mathrm{r},\mathrm{s},\mathrm{t}$ by declaring $w_1 \formaleq w_2 > w'_1 \formaleq w'_2$ if one of the following holds:
\begin{itemize}
\item The shape of $w_1 \formaleq w_2$ is greater than the shape of $w'_1 \formaleq w'_2$.
\item $w_1 \formaleq w_2$ and $w'_1 \formaleq w'_2$ have the same shape, and the string of variables appearing in $w_1 \formaleq w_2$ is lower in the lexicographical ordering (using $\x < \y < \z < \w < \uu < \vv < \mathrm{r} < \mathrm{s} < \mathrm{t}$) than the corresponding string for $w'_1 \formaleq w'_2$.
\end{itemize}
Thus for instance any law of shape $\ast \op \ast \formaleq \ast \op (\ast \op \ast)$ is lower than any law of shape
$\ast \op \ast \formaleq (\ast \op \ast) \op \ast$.  Among the laws of shape $\ast \op \ast \formaleq \ast \op (\ast \op \ast)$, the lowest is $\x \op \x \formaleq \x \op (\x \op \x)$, which is less than (say) $\x \op \x \formaleq \y \op (\y \op \y)$, which is in turn less than $\x \op \y \formaleq \x \op (\x \op \x)$.

We say that two equational laws are \emph{definitionally equivalent}\footnote{This can be distinguished from the weaker notion of \emph{propositional equivalence} (mutual entailment) used in the rest of the paper.} if one can be obtained from another by some combination of relabeling the variables and applying the symmetric law $w_1 \formaleq w_2 \iff w_2 \formaleq w_1$.  For instance, $(0 \op 1) \op 2 \formaleq 1$ is definitionally equivalent to $0 \formaleq (1 \op 0) \op 2$.  We then replace every equational law with their minimal element in their definitional equivalence class, which can be viewed as the \emph{normal form} for that law; for instance, the normal form of $(0 \op 1) \op 2 \formaleq 1$ would be $0 \formaleq (1 \op 0) \op 2$.  Finally, we eliminate any law of the form $w \formaleq w$ other than $0 \formaleq 0$.  We then number the remaining equations $\Eq{1}, \Eq{2}, \dots$.  For instance, $\Eq{1}$ is the trivial law $0 \formaleq 0$, $\Eq{2}$ is the singleton law $0 \formaleq 1$, $\Eq{3}$ is the idempotent law $0 \formaleq 0 \op 0$, and so forth.  Lists and code for generating these equations, or the equation number attached to a given equation, can be found in the ETP repository.

The number of equations in this list of order $n=0,1,2,\dots$ is given by
$$ 2, 5, 39, 364, 4284, 57882, 888365, \dots$$
(\url{https://oeis.org/A376640}).  The number can be computed to be
$$ C_{n+1} B_{n+2}/2$$
if $n$ is odd, $2$ if $n=0$, and
$$ (C_{n+1} B_{n+2}+ C_{n/2}(2D_{n+2}-B_{n+2}))/2 - C_{n/2} B_{n/2+1}$$
if $n > 2$ is even, where $C_n, B_n$ are the Catalan and Bell numbers, and $D_n$ is the number of partitions of $[n]$ up to reflection, which for $n=0,1,2,\dots$ is
$$ 1, 1, 2, 4, 11, 32, 117, \dots$$
(\url{https://oeis.org/A103293}).  A proof of this claim can be found in the ETP blueprint.  In particular, there are $4694$ equations of order at most $4$.

Below we record some specific equations appearing in this paper, using the alphabet $\x$, $\y$, $\z$, $\w$ in place of $0$, $1$, $2$, $3$ for readability.
\begingroup\allowdisplaybreaks
\begin{align}
    \x &\formaleq \x & \hbox{(Trivial law)} \label{eq1}\tag{E1} \\
    \x &\formaleq \y & \hbox{(Singleton law)} \label{eq2}\tag{E2} \\
    \x &\formaleq \x \op \x & \hbox{(Idempotent law)} \label{eq3}\tag{E3} \\
    \x &\formaleq \x \op \y & \hbox{(Left-absorptive law)} \label{eq4}\tag{E4} \\
    \x &\formaleq \y \op \x & \hbox{(Right-absorptive law)} \label{eq5}\tag{E5} \\
    \x &\formaleq \x \op (\y \op \x) \label{eq10}\tag{E10} \\
    \x &\formaleq \x \op (\y \op \y) & \hbox{(Right-unit squares law)} \label{eq11}\tag{E11} \\
    \x &\formaleq \y \op (\x \op \y) & \hbox{(Semi-symmetric quasigroups)} \label{eq14}\tag{E14} \\
    \x &\formaleq (\x \op \x) \op \x \label{eq23}\tag{E23} \\
    \x \op \x &\formaleq \y \op \y & \hbox{(Unipotence law)} \label{eq40}\tag{E40} \\
    \x \op \x &\formaleq \y \op \z & \hbox{(Equivalent to~\eqref{eq46})} \label{eq41}\tag{E41} \\
    \x \op \y &\formaleq \y \op \x & \hbox{(Commutative law)} \label{eq43}\tag{E43} \\
    \x \op \y &\formaleq \z \op \w & \hbox{(Constant law)} \label{eq46}\tag{E46} \\
    \x &\formaleq \x \op (\x \op (\x \op \x)) \label{eq47}\tag{E47} \\
    \x &\formaleq \y \op (\y \op (\x \op \y))  \label{eq73}\tag{E73} \\
    \x &\formaleq (\x \op \x) \op (\x \op \x) \label{eq151}\tag{E151} \\
    \x &\formaleq (\y \op \x) \op (\x \op \z) & \hbox{(Central groupoid law)} \label{eq168}\tag{E168} \\
    \x &\formaleq (\x \op (\x \op \y)) \op \y \label{eq206}\tag{E206} \\
    \x &\formaleq ((\x \op \x) \op \x) \op \x \label{eq255}\tag{E255} \\
    \x \op \y &\formaleq \x \op (\y \op \z) & \hbox{(Right reduction law)} \label{eq327}\tag{E327} \\
    \x \op \y &\formaleq (\x \op \y) \op \y & \hbox{(Right idempotence law)} \label{eq378}\tag{E378} \\
    \x \op \y &\formaleq (\z \op \x) \op \y & \hbox{(Left reduction law)} \label{eq395}\tag{E395} \\
    \x &\formaleq \x \op (\x \op (\x \op (\y \op \x))) \label{eq413}\tag{E413} \\
    \x &\formaleq \x \op (\y \op (\z \op (\y \op \x))) \label{eq450}\tag{E450} \\
    \x &\formaleq \y \op (\x \op (\z \op (\z \op \y))) & \hbox{(Totally symmetric quasigroups)} \label{eq492}\tag{E492} \\
    \x &\formaleq \y \op (\z \op (\x \op (\y \op \z))) & \hbox{(Tarski's axiom)} \label{eq543}\tag{E543} \\
    \x &\formaleq \x \op (\y \op ((\z \op \x) \op \y)) & \hbox{(Non-trivially equivalent to~\eqref{eq4})} \label{eq650}\tag{E650} \\
    \x &\formaleq \y \op (\x \op ((\y \op \x) \op \y)) & \hbox{(Last open implication)} \label{eq677}\tag{E677} \\
    \x &\formaleq \x \op ((\x \op \x) \op (\x \op \x)) \label{eq817}\tag{E817} \\
    \x &\formaleq \x \op ((\y \op \z) \op (\x \op \z)) \label{eq854}\tag{E854}\\
    \x &\formaleq \x \op ((\y \op (\y \op \x)) \op \x) \label{eq1045}\tag{E1045} \\
    \x &\formaleq \x \op ((\y \op (\z \op \x)) \op \x) \label{eq1055}\tag{E1055} \\
    \x &\formaleq \y \op ((\y \op (\x \op \x)) \op \y) \label{eq1110}\tag{E1110} \\
    \x &\formaleq \y \op ((\y \op (\x \op \z)) \op \z) \label{eq1117}\tag{E1117} \\
    \x &\formaleq \y \op (((\x \op \y) \op \x) \op \y) \label{eq1286}\tag{E1286} \\
    \x &\formaleq \y \op (((\y \op \y) \op \x) \op \y) \label{eq1323}\tag{E1323} \\
    \x &\formaleq (\y \op \x) \op (\x \op (\z \op \y)) & \hbox{(Weak central groupoids)}\label{eq1485}\tag{E1485} \\
    \x &\formaleq (\y \op \y) \op (\x \op (\y \op \x)) \label{eq1518}\tag{E1518} \\
    \x &\formaleq (\y \op \z) \op (\y \op (\x \op \z)) & \hbox{(Boolean groups)} \label{eq1571}\tag{E1571} \\
    \x &\formaleq (\x \op \x) \op ((\x \op \x) \op \x) \label{eq1629}\tag{E1629} \\
    \x &\formaleq (\x \op \y) \op ((\x \op \y) \op \y) \label{eq1648}\tag{E1648} \\
    \x &\formaleq (\x \op \y) \op ((\y \op \y) \op \z) \label{eq1659}\tag{E1659} \\
    \x &\formaleq (\y \op \x) \op ((\x \op \z) \op \z) & \hbox{(Equivalent to~\eqref{eq2})} \label{eq1689}\tag{E1689} \\
    \x &\formaleq (\y \op \y) \op ((\y \op \x) \op \y) \label{eq1729}\tag{E1729} \\
    \x &\formaleq (\y \op (\x \op (\y \op \x))) \op \y \label{eq2301}\tag{E2301} \\
        \x &\formaleq (\x \op ((\x \op \x) \op \x)) \op \x \label{eq2441}\tag{E2441} \\
        \x &\formaleq ((\y \op \y) \op (\y \op \x)) \op \y \label{eq2744}\tag{E2744} \\
        \x &\formaleq ((\y \op (\x \op \y)) \op \x) \op \y & \hbox{(Dual of \eqref{eq677})} \label{eq2910}\tag{E2910} \\
        \x \op \y &\formaleq \x \op (\y \op (\x \op \y)) \label{eq3316}\tag{E3316} \\
        \x \op \y &\formaleq \x \op ((\y \op \y) \op \z) \label{eq3523}\tag{E3523} \\
        \x \op \y &\formaleq (\x \op \z) \op (\y \op \z) \label{eq3737}\tag{E3737} \\
        \x \op \y &\formaleq (\x \op (\y \op \x)) \op \y \label{eq3925}\tag{E3925} \\
        \x \op (\y \op \x) &\formaleq \x \op (\y \op \z) \label{eq4315}\tag{E4315} \\
        \x \op (\x \op \x) &\formaleq (\x \op \x) \op \x & \hbox{(Cube-associativity law)} \label{eq4380}\tag{E4380} \\
        \x \op (\y \op \y) &\formaleq (\y \op \y) \op \x & \hbox{(Central squares law)} \label{eq4482}\tag{E4482} \\
        \x \op (\y \op \z) &\formaleq (\x \op \y) \op \z & \hbox{(Associative law)} \label{eq4512}\tag{E4512} \\
        \x \op (\y \op \z) &\formaleq (\y \op \z) \op \x & \hbox{(Central products law)} \label{eq4531}\tag{E4531} \\
        \x &\formaleq \y \op (\y \op (\y \op (\x \op (\z \op \y)))) \label{eq5093}\tag{E5093} \\
        \x &\formaleq \y \op (\z \op ((\y \op \x) \op (\z \op (\y \op \z)))) & \hbox{(Eisenstein modules)} \label{eq85914} \tag{E85914} \\
        \x &\formaleq \y \op (\z \op ((\y \op \z) \op (\w \op (\x \op \w)))) & \hbox{(Gaussian modules)} \label{eq86082} \tag{E86082} \\
        \x &\formaleq (\y \op ((\x \op \y) \op \y)) \op (\x \op (\z \op \y)) & \hbox{(Sheffer stroke)} \label{eq345169}\tag{E345169}
\end{align}
We also list some order-$8$ characterizations of group division relevant for \Cref{higman-neumann}.
\begin{align}
        \x &\formaleq \y \op ((((\x \op \x) \op \x) \op \z) \op (((\x \op \x) \op \y) \op \z)) & \hbox{(McCune law)} \label{eq42302852}\tag{E42302852} \\
        \x &\formaleq \y \op ((((\x \op \x) \op \x) \op \z) \op (((\y \op \y) \op \y) \op \z)) & \label{eq42302946}\tag{E42302946} \\
        \x &\formaleq \y \op ((((\y \op \y) \op \x) \op \z) \op (((\y \op \y) \op \y) \op \z)) & \hbox{(Higman--Neumann law)} \label{eq42323216}\tag{E42323216} \\
        \x &\formaleq (\y \op \y) \op (\y \op ((\x \op \z) \op (((\x \op \x) \op \y) \op \z))) & \hbox{(in finite magmas)} \label{eq67953597}\tag{E67953597} \\
        \x &\formaleq ((\y \op \y) \op \y) \op ((((\x \op \x) \op \x) \op \z) \op (\y \op \z)) & \label{eq89176740}\tag{E89176740} \\
        \x &\formaleq ((\y \op \y) \op ((\x \op \z) \op \x)) \op ((\z \op \w) \op (\x \op \w)) & \label{eq102744082}\tag{E102744082} \\
        \x &\formaleq ((\y \op \y) \op (\y \op (\x \op (((\y \op \y) \op \y) \op \z)))) \op \z & \hbox{(McCune law)} \label{eq147976245}\tag{E147976245}
\end{align}
\endgroup

\section{Author contributions}

In a \href{https://github.com/teorth/equational_theories/blob/main/paper/contributions.md}{companion document} to this paper, the contributions of each author of this paper to the ETP are described, following the standard CRediT categories\footnote{\url{https://credit.niso.org/}}. Below are the affiliations and grant acknowledgments of individual participants.

\begin{itemize}\raggedright
    \item Matthew Bolan: University of Toronto, matthew.bolan@mail.utoronto.ca. Supported by an Ontario Graduate Scholarship.
    \item Joachim Breitner: Lean FRO, mail@joachim-breitner.de, ORCID 0000-0003-3753-6821
    \item Jose Brox: IMUVA-Mathematics Research Institute, Universidad de Valladolid, josebrox@uva.es, ORCID 0000-0001-9822-5838. Supported by a postdoctoral fellowship “Convocatoria 2021” funded by Universidad de Valladolid, and partially supported by grant PID2022-137283NB-C22 funded by MCIN/AEI/10.13039/501100011033 and ERDF “A way of making Europe”
    \item Nicholas Carlini: Unaffiliated, nicholas@carlini.com
    \item Mario Carneiro: Chalmers University of Technology \& Gothenburg University, Sweden, marioc@chalmers.se, ORCID 0000-0002-0470-5249
    \item Floris van Doorn: University of Bonn, vdoorn@math.uni-bonn.de, ORCID 0000-0003-2899-8565
    \item Martin Dvorak: Institute of Science and Technology Austria, martin.dvorak@matfyz.cz, ORCID 0000-0001-5293-214X
    \item Andr\'es Goens: TU Darmstadt, andres.goens@tu-darmstadt.de, ORCID 0000-0002-0409-1363
    \item Aaron Hill: Unaffiliated, aa1ronham@gmail.com, ORCID 0009-0007-0827-1277
    \item Harald Husum: Intelecy, harald.husum@intelecy.com, ORCID 0009-0003-0634-7435
    \item Hern\'an Ibarra Mejia: Unaffiliated, hernan@ibarramejia.com
    \item Zoltan A. Kocsis: University of New South Wales, z.kocsis@unsw.edu.au
    \item Bruno Le Floch: CNRS and Laboratoire de Physique Th\'eorique et Hautes \'Energies, Sorbonne Universit\'e, blefloch@lpthe.jussieu.fr, ORCID 0000-0002-3965-9705. Partially supported by the grants Projet-ANR-23-CE40-0010 and HORIZON-MSCA-2022-SE/101131233
    \item Amir Livne Bar-on: Unaffiliated, amir.livne.baron@gmail.com
    \item Lorenzo Luccioli: University of Bologna, lorenzo.luccioli2@unibo.it, ORCID 0009-0009-2256-2833
    \item Douglas McNeil: Unaffiliated, dsm054@gmail.com, ORCID 0009-0006-4662-0469
    \item Alex Meiburg: Perimeter Institute for Theoretical Physics / University of Waterloo Institute for Quantum Computing, teqtp@ohaithe.re, ORCID 0000-0002-4506-9146
    \item Pietro Monticone: University of Trento, pietro.monticone@studenti.unitn.it, ORCID 0000-0002-2731-9623
    \item Pace P. Nielsen: Department of Mathematics, Brigham Young University, pace@math.byu.edu
    \item Emmanuel Osalotioman Osazuwa: University of Benin, emmanuel.osazuwa@physci.uniben.edu, ORCID 0009-0003-1415-8263
    \item Giovanni Paolini: University of Bologna, g.paolini@unibo.it, ORCID 0000-0002-3964-9101
    \item Marco Petracci: University of Bologna, marco.petracci@studio.unibo.it
    \item Bernhard Reinke: Aix-Marseille Université, bernhard.reinke@univ-amu.fr, ORCID 0000-0001-9024-2449
    \item David Renshaw: Institute for Computer-Aided Reasoning in Mathematics, renshaw@icarm.io, ORCID 0000-0002-9987-9144
    \item Marcus Rossel: Barkhausen Institut, marcus.rossel@barkhauseninstitut.org, ORCID 0009-0001-3567-6890
    \item Cody Roux: Amazon Web Services, cody.roux@gmail.com, ORCID 0009-0004-5304-037X
    \item J\'er\'emy Scanvic: Laboratoire de Physique, École Normale Supérieure de Lyon, jeremy.scanvic@ens-lyon.fr, ORCID 0009-0003-6117-0492
    \item Shreyas Srinivas: CISPA Helmholtz Center for Information Security, Saarbr\"{u}cken, Germany, shreyas.srinivas@cispa.de, ORCID 0000-0002-3993-1596
    \item Anand Rao Tadipatri: University of Cambridge, art71@cam.ac.uk, ORCID 0009-0007-0057-4169
    \item Terence Tao: Department of Mathematics, UCLA, tao@math.ucla.edu. Supported by the James and Carol Collins Chair, the Mathematical Analysis \& Application Research Fund, and by NSF grants DMS-2347850, and is particularly grateful to recent donors to the Research Fund, ORCID 0000-0002-0140-7641
    \item Vlad Tsyrklevich: Unaffiliated, vlad@tsyrklevi.ch, ORCID 0009-0009-9511-5460
    \item Fernando Vaquerizo-Villar: Biomedical Engineering Group, University of Valladolid, and CIBER de Bioingeniería, Biomateriales y Nanomedicina, Instituto de Salud Carlos III, fernando.vaquerizo@uva.es, ORCID 0000-0002-5898-2006
    \item Daniel Weber: Ben-Gurion University of the Negev, weberdan@post.bgu.ac.il, ORCID 0009-0008-4615-6445
    \item Fan Zheng: Unaffiliated, fanzheng1729@outlook.com
\end{itemize}

\bibliographystyle{plainurl}
\bibliography{references}

\begin{thebibliography}{10}

\bibitem{tensorflow2015-whitepaper}
Mart\'{i}n Abadi, Ashish Agarwal, Paul Barham, Eugene Brevdo, Zhifeng Chen,
  Craig Citro, Greg~S. Corrado, Andy Davis, Jeffrey Dean, Matthieu Devin,
  Sanjay Ghemawat, Ian Goodfellow, Andrew Harp, Geoffrey Irving, Michael Isard,
  Yangqing Jia, Rafal Jozefowicz, Lukasz Kaiser, Manjunath Kudlur, Josh
  Levenberg, Dandelion Man\'{e}, Rajat Monga, Sherry Moore, Derek Murray, Chris
  Olah, Mike Schuster, Jonathon Shlens, Benoit Steiner, Ilya Sutskever, Kunal
  Talwar, Paul Tucker, Vincent Vanhoucke, Vijay Vasudevan, Fernanda Vi\'{e}gas,
  Oriol Vinyals, Pete Warden, Martin Wattenberg, Martin Wicke, Yuan Yu, and
  Xiaoqiang Zheng.
\newblock {TensorFlow}: Large-scale machine learning on heterogeneous systems,
  2015.
\newblock Software available from \url{https://www.tensorflow.org}.

\bibitem{LPAR2024:Prover9_Unleashed_Automated_Configuration}
Kristina Aleksandrova, Jan Jakubuv, and Cezary Kaliszyk.
\newblock {Prover9} unleashed: Automated configuration for enhanced proof
  discovery.
\newblock In Nikolaj Bjørner, Marijn Heule, and Andrei Voronkov, editors, {\em
  Proceedings of 25th Conference on Logic for Programming, Artificial
  Intelligence and Reasoning}, volume 100 of {\em EPiC Series in Computing},
  pages 360--369. EasyChair, 2024.
\newblock URL: \url{https://easychair.org/publications/paper/tGgl}, \href
  {https://doi.org/10.29007/sd6t} {\path{doi:10.29007/sd6t}}.

\bibitem{DBLP:conf/ijcai/AudemardS09}
Gilles Audemard and Laurent Simon.
\newblock Predicting learnt clauses quality in modern {SAT} solvers.
\newblock In {\em Proceedings of the 21st International Joint Conference on
  Artificial Intelligence, {IJCAI} 2009}, pages 399--404, 2009.
\newblock URL: \url{https://www.ijcai.org/Proceedings/09/Papers/074.pdf}.

\bibitem{DBLP:conf/cp/AudemardS12}
Gilles Audemard and Laurent Simon.
\newblock Refining restarts strategies for {SAT} and {UNSAT}.
\newblock In Michela Milano, editor, {\em Principles and Practice of Constraint
  Programming - 18th International Conference, {CP} 2012, {Q}u{\'{e}}bec
  {C}ity, {QC}, {C}anada, {O}ctober 8-12, 2012. Proceedings}, volume 7514 of
  {\em Lecture Notes in Computer Science}, pages 118--126. Springer, 2012.
\newblock \href {https://doi.org/10.1007/978-3-642-33558-7\_11}
  {\path{doi:10.1007/978-3-642-33558-7\_11}}.

\bibitem{austin}
A.~K. Austin.
\newblock A note on models of identities.
\newblock {\em Proc. Amer. Math. Soc.}, 16:522--523, 1965.
\newblock \href {https://doi.org/10.2307/2034688} {\path{doi:10.2307/2034688}}.

\bibitem{austin_finite}
A.~K. Austin.
\newblock Finite models for laws in two variables.
\newblock {\em Proc. Amer. Math. Soc.}, 17:1410--1412, 1966.
\newblock \href {https://doi.org/10.2307/2035753} {\path{doi:10.2307/2035753}}.

\bibitem{term-rewriting}
Franz Baader and Tobias Nipkow.
\newblock {\em Term rewriting and all that}.
\newblock Cambridge University Press, Cambridge, 1998.
\newblock \href {https://doi.org/10.1017/CBO9781139172752}
  {\path{doi:10.1017/CBO9781139172752}}.

\bibitem{TheVampireDiary}
Filip B{\'a}rtek, Ahmed Bhayat, Robin Coutelier, M{\'a}rton Hajdu, Matthias
  Hetzenberger, Petra Hozzov{\'a}, Laura Kov{\'a}cs, Jakob Rath, Michael
  Rawson, Giles Reger, Martin Suda, Johannes Schoisswohl, and Andrei Voronkov.
\newblock The {Vampire} diary.
\newblock In Ruzica Piskac and Zvonimir Rakamari{\'{c}}, editors, {\em Computer
  Aided Verification}, pages 57--71, Cham, 2025. Springer Nature Switzerland.
\newblock \href {https://doi.org/10.1007/978-3-031-98682-6_4}
  {\path{doi:10.1007/978-3-031-98682-6_4}}.

\bibitem{DBLP:journals/cacm/BentkampBNTVW23}
Alexander Bentkamp, Jasmin Blanchette, Visa Nummelin, Sophie Tourret, Petar
  Vukmirovic, and Uwe Waldmann.
\newblock Mechanical mathematicians.
\newblock {\em Commun. {ACM}}, 66(4):80--90, 2023.
\newblock \href {https://doi.org/10.1145/3557998} {\path{doi:10.1145/3557998}}.

\bibitem{diamond-lemma}
George~M Bergman.
\newblock The diamond lemma for ring theory.
\newblock {\em Advances in Mathematics}, 29(2):178--218, 1978.
\newblock \href {https://doi.org/10.1016/0001-8708(78)90010-5}
  {\path{doi:10.1016/0001-8708(78)90010-5}}.

\bibitem{berman-burris}
Joel Berman and Stanley Burris.
\newblock A computer study of 3-element groupoids.
\newblock In {\em Logic and algebra}, pages 379--429. Routledge, 2017.

\bibitem{BiereFazekasFleuryHeisinger-SAT-Competition-2020-solvers}
Armin Biere, Katalin Fazekas, Mathias Fleury, and Maximillian Heisinger.
\newblock {CaDiCaL}, {Kissat}, {Paracooba}, {Plingeling} and {Treengeling}
  entering the {SAT} competition 2020.
\newblock In Tomas Balyo, Nils Froleyks, Marijn Heule, Markus Iser, Matti
  J{\"a}rvisalo, and Martin Suda, editors, {\em Proc.~of {SAT} Competition 2020
  -- Solver and Benchmark Descriptions}, volume B-2020-1 of {\em Department of
  Computer Science Report Series B}, pages 51--53. University of Helsinki,
  2020.

\bibitem{The_Equational_Theories_repository}
Matthew Bolan, Joachim Breitner, Jose Brox, Nicholas Carlini, Mario Carneiro,
  Floris van Doorn, Martin Dvorak, Andr{\'e}s Goens, Aaron Hill, Harald Husum,
  Hern{\'a}n Ibarra~Mejia, Zoltan~A. Kocsis, Bruno Le~Floch, Amir Livne Bar-on,
  Lorenzo Luccioli, Douglas McNeil, Alex Meiburg, Pietro Monticone, Pace~P.
  Nielsen, Emmanuel~Osalotioman Osazuwa, Giovanni Paolini, Marco Petracci,
  Bernhard Reinke, David Renshaw, Marcus Rossel, Cody Roux, J{\'e}r{\'e}my
  Scanvic, Shreyas Srinivas, Anand~Rao Tadipatri, Terence Tao, Vlad
  Tsyrklevich, Fernando Vaquerizo-Villar, Daniel Weber, and Fan Zheng.
\newblock The {E}quational {T}heories project, September 2024.
\newblock URL: \url{https://github.com/teorth/equational_theories}.

\bibitem{burris}
Stanley Burris and H.~P. Sankappanavar.
\newblock {\em A course in universal algebra}, volume~78 of {\em Grad. Texts
  Math.}
\newblock Springer, Cham, 1981.

\bibitem{FLT_Lean}
Kevin Buzzard and Richard Taylor.
\newblock {FLT}: An ongoing {Lean} formalization of {F}ermat's last theorem,
  2025.
\newblock URL: \url{https://github.com/ImperialCollegeLondon/FLT}.

\bibitem{lean4lean}
Mario Carneiro.
\newblock {Lean4Lean}: Verifying a typechecker for {Lean}, in {Lean}, 2025.
\newblock \href {https://arxiv.org/abs/2403.14064} {\path{arXiv:2403.14064}}.

\bibitem{DBLP:conf/itp/CluneQBA24}
Joshua Clune, Yicheng Qian, Alexander Bentkamp, and Jeremy Avigad.
\newblock Duper: A proof-producing superposition theorem prover for dependent
  type theory.
\newblock In Yves Bertot, Temur Kutsia, and Michael Norrish, editors, {\em 15th
  International Conference on Interactive Theorem Proving, {ITP} 2024,
  {S}eptember 9-14, 2024, {T}bilisi, {G}eorgia}, volume 309 of {\em LIPIcs},
  pages 10:1--10:20. Schloss Dagstuhl - Leibniz-Zentrum f{\"{u}}r Informatik,
  2024.
\newblock \href {https://doi.org/10.4230/LIPICS.ITP.2024.10}
  {\path{doi:10.4230/LIPICS.ITP.2024.10}}.

\bibitem{bbchallenge_bb5}
The~{bbchallenge} Collaboration, Justin Blanchard, Daniel Briggs, Konrad Deka,
  Nathan Fenner, Yannick Forster, Georgi~Georgiev ({Skelet}), Matthew~L. House,
  Rachel Hunter, {Iijil}, Maja K\k{a}dzio{\l}ka, Pavel Kropitz, Shawn Ligocki,
  {mxdys}, Mateusz Na\'{s}ciszewski, {savask}, Tristan St\'{e}rin, Chris Xu,
  Jason Yuen, and Th\'{e}o Zimmermann.
\newblock Determination of the fifth {B}usy {B}eaver value, 2025.
\newblock See \url{https://bbchallenge.org/}.
\newblock URL: \url{https://github.com/bbchallenge/bbchallenge-paper}, \href
  {https://arxiv.org/abs/2509.12337} {\path{arXiv:2509.12337}}.

\bibitem{deMoura-Bjorner-2009}
Leonardo de~Moura and Nikolaj Bj{\o}rner.
\newblock Satisfiability modulo theories: An appetizer.
\newblock In Marcel Vin{\'i}cius~Medeiros Oliveira and Jim Woodcock, editors,
  {\em Formal Methods: Foundations and Applications}, pages 23--36, Berlin,
  Heidelberg, 2009. Springer Berlin Heidelberg.

\bibitem{DBLP:conf/cade/MouraB07}
Leonardo~Mendon{\c{c}}a de~Moura and Nikolaj~S. Bj{\o}rner.
\newblock Efficient e-matching for {SMT} solvers.
\newblock In Frank Pfenning, editor, {\em Automated Deduction - {CADE}-21, 21st
  International Conference on Automated Deduction, {B}remen, {G}ermany, {J}uly
  17-20, 2007, Proceedings}, volume 4603 of {\em Lecture Notes in Computer
  Science}, pages 183--198. Springer, 2007.
\newblock \href {https://doi.org/10.1007/978-3-540-73595-3\_13}
  {\path{doi:10.1007/978-3-540-73595-3\_13}}.

\bibitem{DBLP:conf/tacas/MouraB08}
Leonardo~Mendon{\c{c}}a de~Moura and Nikolaj~S. Bj{\o}rner.
\newblock {Z3:} an efficient {SMT} solver.
\newblock In C.~R. Ramakrishnan and Jakob Rehof, editors, {\em Tools and
  Algorithms for the Construction and Analysis of Systems, 14th International
  Conference, {TACAS} 2008, Held as Part of the Joint {E}uropean Conferences on
  Theory and Practice of Software, {ETAPS} 2008, {B}udapest, {H}ungary, {M}arch
  29-{A}pril 6, 2008. Proceedings}, volume 4963 of {\em Lecture Notes in
  Computer Science}, pages 337--340. Springer, 2008.
\newblock \href {https://doi.org/10.1007/978-3-540-78800-3\_24}
  {\path{doi:10.1007/978-3-540-78800-3\_24}}.

\bibitem{PFR_Tao_Dilles_2023}
Yael Dillies and Terence Tao.
\newblock Formalization of the {P}olynomial {F}reiman-{R}uzsa conjecture of
  {M}arton, November 2023.
\newblock URL: \url{https://github.com/teorth/pfr}.

\bibitem{dolev}
D.~Dolev and A.~C. Yao.
\newblock On the security of public key protocols.
\newblock {\em IEEE Transactions on Information Theory}, 29(2):198--208, 1983.
\newblock \href {https://doi.org/10.1109/TIT.1983.1056650}
  {\path{doi:10.1109/TIT.1983.1056650}}.

\bibitem{DBLP:journals/pacmpl/EbnerURAM17}
Gabriel Ebner, Sebastian Ullrich, Jared Roesch, Jeremy Avigad, and Leonardo
  de~Moura.
\newblock A metaprogramming framework for formal verification.
\newblock {\em Proc. {ACM} Program. Lang.}, 1({ICFP}):34:1--34:29, 2017.
\newblock \href {https://doi.org/10.1145/3110278} {\path{doi:10.1145/3110278}}.

\bibitem{DBLP:conf/sat/EenS03}
Niklas E{\'{e}}n and Niklas S{\"{o}}rensson.
\newblock An extensible {SAT}-solver.
\newblock In Enrico Giunchiglia and Armando Tacchella, editors, {\em Theory and
  Applications of Satisfiability Testing, 6th International Conference, {SAT}
  2003. {S}anta {M}argherita {L}igure, {I}taly, {M}ay 5-8, 2003 Selected
  Revised Papers}, volume 2919 of {\em Lecture Notes in Computer Science},
  pages 502--518. Springer, 2003.
\newblock \href {https://doi.org/10.1007/978-3-540-24605-3\_37}
  {\path{doi:10.1007/978-3-540-24605-3\_37}}.

\bibitem{evans}
Trevor Evans.
\newblock Products of points---some simple algebras and their identities.
\newblock {\em Amer. Math. Monthly}, 74:362--372, 1967.
\newblock \href {https://doi.org/10.2307/2314563} {\path{doi:10.2307/2314563}}.

\bibitem{Goodfellow-et-al-2016}
Ian Goodfellow, Yoshua Bengio, and Aaron Courville.
\newblock {\em Deep Learning}.
\newblock MIT Press, 2016.
\newblock URL: \url{http://www.deeplearningbook.org}.

\bibitem{Gowers2009}
Timothy Gowers and Michael Nielsen.
\newblock Massively collaborative mathematics.
\newblock {\em Nature}, 461(7266):879–881, October 2009.
\newblock \href {https://doi.org/10.1038/461879a} {\path{doi:10.1038/461879a}}.

\bibitem{higman-neumann}
Graham Higman and B.~H. Neumann.
\newblock Groups as groupoids with one law.
\newblock {\em Publ. Math. Debrecen}, 2:215--221, 1952.
\newblock \href {https://doi.org/10.5486/pmd.1952.2.3-4.10}
  {\path{doi:10.5486/pmd.1952.2.3-4.10}}.

\bibitem{imms-sat18}
Alexey Ignatiev, Antonio Morgado, and Jo{\~{a}}o Marques{-}Silva.
\newblock {PySAT}: A {P}ython toolkit for prototyping with {SAT} oracles.
\newblock In {\em SAT}, pages 428--437, 2018.
\newblock \href {https://doi.org/10.1007/978-3-319-94144-8_26}
  {\path{doi:10.1007/978-3-319-94144-8_26}}.

\bibitem{itk-sat24}
Alexey Ignatiev, Zi~Li Tan, and Christos Karamanos.
\newblock Towards universally accessible {SAT} technology.
\newblock In {\em SAT}, pages 4:1--4:11, 2024.
\newblock \href {https://doi.org/10.4230/LIPICS.SAT.2024.16}
  {\path{doi:10.4230/LIPICS.SAT.2024.16}}.

\bibitem{janota2025experimentalresultsvampireequational}
Mikoláš Janota.
\newblock Experimental results for {Vampire} on the {E}quational {T}heories
  project, 2025.
\newblock \href {https://arxiv.org/abs/2508.15856} {\path{arXiv:2508.15856}}.

\bibitem{Kisielewicz}
Andrzej Kisielewicz.
\newblock Varieties of algebras with no nontrivial finite members.
\newblock In {\em Lattices, semigroups, and universal algebra ({L}isbon,
  1988)}, pages 129--136. Plenum, New York, 1990.

\bibitem{Kisielewicz2}
Andrzej Kisielewicz.
\newblock Austin identities.
\newblock {\em Algebra Universalis}, 38(3):324--328, 1997.
\newblock \href {https://doi.org/10.1007/s000120050057}
  {\path{doi:10.1007/s000120050057}}.

\bibitem{knuth}
Donald~E. Knuth.
\newblock Notes on central groupoids.
\newblock {\em J. Combinatorial Theory}, 8:376--390, 1970.

\bibitem{knuth-bendix}
Donald~E. Knuth and Peter~B. Bendix.
\newblock Simple word problems in universal algebras.
\newblock In {\em Computational Problems in Abstract Algebra ({P}roc. {C}onf.,
  {O}xford, 1967)}, pages 263--297. Pergamon, Oxford-New York-Toronto, Ont.,
  1970.

\bibitem{DBLP:journals/pacmpl/KoehlerGBGTS24}
Thomas Koehler, Andr{\'{e}}s Goens, Siddharth Bhat, Tobias Grosser, Phil
  Trinder, and Michel Steuwer.
\newblock Guided equality saturation.
\newblock {\em Proc. {ACM} Program. Lang.}, 8({POPL}):1727--1758, 2024.
\newblock \href {https://doi.org/10.1145/3632900} {\path{doi:10.1145/3632900}}.

\bibitem{DBLP:conf/cav/KovacsV13}
Laura Kov{\'{a}}cs and Andrei Voronkov.
\newblock First-order theorem proving and {Vampire}.
\newblock In Natasha Sharygina and Helmut Veith, editors, {\em Computer Aided
  Verification - 25th International Conference, {CAV} 2013, {S}aint
  {P}etersburg, {R}ussia, {J}uly 13-19, 2013. Proceedings}, volume 8044 of {\em
  Lecture Notes in Computer Science}, pages 1--35. Springer, 2013.
\newblock \href {https://doi.org/10.1007/978-3-642-39799-8\_1}
  {\path{doi:10.1007/978-3-642-39799-8\_1}}.

\bibitem{DBLP:series/txtcs/KroeningS16}
Daniel Kroening and Ofer Strichman.
\newblock {\em Decision Procedures - An Algorithmic Point of View, Second
  Edition}.
\newblock Texts in Theoretical Computer Science. An {EATCS} Series. Springer,
  2016.
\newblock \href {https://doi.org/10.1007/978-3-662-50497-0}
  {\path{doi:10.1007/978-3-662-50497-0}}.

\bibitem{klt}
Andr\'e{} K\"undgen, Gregor Leander, and Carsten Thomassen.
\newblock Switchings, extensions, and reductions in central digraphs.
\newblock {\em J. Combin. Theory Ser. A}, 118(7):2025--2034, 2011.
\newblock \href {https://doi.org/10.1016/j.jcta.2011.03.009}
  {\path{doi:10.1016/j.jcta.2011.03.009}}.

\bibitem{DBLP:conf/cpp/LimpergF23}
Jannis Limperg and Asta~Halkj{\ae}r From.
\newblock {Aesop}: White-box best-first proof search for {Lean}.
\newblock In Robbert Krebbers, Dmitriy Traytel, Brigitte Pientka, and Steve
  Zdancewic, editors, {\em Proceedings of the 12th {ACM} {SIGPLAN}
  International Conference on Certified Programs and Proofs, {CPP} 2023,
  {B}oston, {MA}, {USA}, {J}anuary 16-17, 2023}, pages 253--266. {ACM}, 2023.
\newblock \href {https://doi.org/10.1145/3573105.3575671}
  {\path{doi:10.1145/3573105.3575671}}.

\bibitem{leanblueprint}
Patrick Massot.
\newblock {leanblueprint}: {plasTeX} plugin to build formalization blueprints.
\newblock URL: \url{https://github.com/PatrickMassot/leanblueprint}.

\bibitem{prover9-mace4}
W.~McCune.
\newblock {Prover9} and {Mace4}, 2005--2010.
\newblock URL: \url{http://www.cs.unm.edu/~mccune/prover9/}.

\bibitem{robbins}
William McCune.
\newblock Solution of the {R}obbins problem.
\newblock {\em J. Automat. Reason.}, 19(3):263--276, 1997.
\newblock \href {https://doi.org/10.1023/A:1005843212881}
  {\path{doi:10.1023/A:1005843212881}}.

\bibitem{mccune-survey}
William McCune.
\newblock Single axioms: with and without computers.
\newblock In {\em Computer mathematics ({C}hiang {M}ai, 2000)}, volume~8 of
  {\em Lecture Notes Ser. Comput.}, pages 83--89. World Sci. Publ., River Edge,
  NJ, 2000.

\bibitem{mccune-webpage-gtsax}
William McCune and Michael Kinyon.
\newblock Yet another paper on group theory single axioms.
\newblock [Accessed in 2025].
\newblock URL: \url{https://www.cs.unm.edu/~mccune/projects/gtsax/#division}.

\bibitem{mccune_et_al}
William McCune, Robert Veroff, Branden Fitelson, Kenneth Harris, Andrew Feist,
  and Larry Wos.
\newblock Short single axioms for {B}oolean algebra.
\newblock {\em J. Automat. Reason.}, 29(1):1--16, 2002.
\newblock \href {https://doi.org/10.1023/A:1020542009983}
  {\path{doi:10.1023/A:1020542009983}}.

\bibitem{mccune1993single}
William~W McCune.
\newblock Single axioms for groups and {A}belian groups with various
  operations.
\newblock {\em Journal of Automated Reasoning}, 10(1):1--13, 1993.

\bibitem{mckenzie}
Ralph McKenzie.
\newblock On spectra, and the negative solution of the decision problem for
  identities having a finite nontrivial model.
\newblock {\em J. Symbolic Logic}, 40:186--196, 1975.
\newblock \href {https://doi.org/10.2307/2271899} {\path{doi:10.2307/2271899}}.

\bibitem{mendelsohn-padmanabhan}
N.~S. Mendelsohn and R.~Padmanabhan.
\newblock Minimal identities for {B}oolean groups.
\newblock {\em J. Algebra}, 34:451--457, 1975.
\newblock \href {https://doi.org/10.1016/0021-8693(75)90169-6}
  {\path{doi:10.1016/0021-8693(75)90169-6}}.

\bibitem{meredith-prior}
C.~A. Meredith and A.~N. Prior.
\newblock Equational logic.
\newblock {\em Notre Dame J. Formal Logic}, 9:212--226, 1968.
\newblock URL: \url{http://projecteuclid.org/euclid.ndjfl/1093893457}.

\bibitem{Monticone_LeanProject_2025}
Pietro Monticone.
\newblock {LeanProject}: A template for blueprint-driven formalization projects
  in {Lean}, 2025.
\newblock URL: \url{https://github.com/pitmonticone/LeanProject}.

\bibitem{the_lean4_paper}
Leonardo~de Moura and Sebastian Ullrich.
\newblock The {Lean} 4 theorem prover and programming language.
\newblock In {\em Automated Deduction – {CADE} 28: 28th International
  Conference on Automated Deduction, Virtual Event, {J}uly 12–15, 2021,
  Proceedings}, page 625–635, Berlin, Heidelberg, 2021. Springer-Verlag.
\newblock \href {https://doi.org/10.1007/978-3-030-79876-5_37}
  {\path{doi:10.1007/978-3-030-79876-5_37}}.

\bibitem{murskii-1}
V.~L. Murski\u{\i}.
\newblock The existence in the three-valued logic of a closed class with a
  finite basis having no finite complete system of identities.
\newblock {\em Dokl. Akad. Nauk SSSR}, 163:815--818, 1965.

\bibitem{murskii-2}
V.~L. Murski\u{\i}.
\newblock The existence of a finite basis of identities, and other properties
  of ``almost all'' finite algebras.
\newblock {\em Problemy Kibernet.}, 30:43--56, 1975.

\bibitem{nieuwenhuis2005proof}
Robert Nieuwenhuis and Albert Oliveras.
\newblock Proof-producing congruence closure.
\newblock In {\em International Conference on Rewriting Techniques and
  Applications}, pages 453--468. Springer, 2005.

\bibitem{Padmanabhan_1969}
R~Padmanabhan.
\newblock On single equational-axiom systems for {A}belian groups.
\newblock {\em Journal of the Australian Mathematical Society},
  9(1–2):143–152, 1969.
\newblock \href {https://doi.org/10.1017/S144678870000570X}
  {\path{doi:10.1017/S144678870000570X}}.

\bibitem{Padmanabhan}
R.~Padmanabhan and R.~W. Quackenbush.
\newblock Equational theories of algebras with distributive congruences.
\newblock {\em Proc. Amer. Math. Soc.}, 41:373--377, 1973.
\newblock \href {https://doi.org/10.2307/2039097} {\path{doi:10.2307/2039097}}.

\bibitem{phillips-vojtechovsky}
J.~D. Phillips and Petr Vojt\v{e}chovsk\'y.
\newblock The varieties of loops of {B}ol-{M}oufang type.
\newblock {\em Algebra Universalis}, 54(3):259--271, 2005.
\newblock \href {https://doi.org/10.1007/s00012-005-1941-1}
  {\path{doi:10.1007/s00012-005-1941-1}}.

\bibitem{egg_tactic}
Marcus Rossel, Rudi Schneider, Thomas Koehler, Michel Steuwer, and Andr{\'e}s
  Goens.
\newblock Towards pen-and-paper-style equational reasoning in interactive
  theorem provers by equality saturation.
\newblock {\em Proc. {ACM} Program. Lang.}, 2026.
\newblock {POPL}.
\newblock \href {https://doi.org/10.1145/3776667} {\path{doi:10.1145/3776667}}.

\bibitem{sheffer}
Henry~Maurice Sheffer.
\newblock A set of five independent postulates for {B}oolean algebras, with
  application to logical constants.
\newblock {\em Trans. Amer. Math. Soc.}, 14(4):481--488, 1913.
\newblock \href {https://doi.org/10.2307/1988701} {\path{doi:10.2307/1988701}}.

\bibitem{Sholander01021959}
Marlow Sholander.
\newblock Postulates for commutative groups.
\newblock {\em The American Mathematical Monthly}, 66(2):93--95, 1959.
\newblock \href {https://doi.org/10.1080/00029890.1959.11989250}
  {\path{doi:10.1080/00029890.1959.11989250}}.

\bibitem{skolem}
Th. Skolem.
\newblock {L}ogisch-kombinatorische {U}ntersuchungen {\"u}ber die
  {E}rf{\"u}llbarkeit oder {B}eweisbarkeit mathematischer {S}{\"a}tze nebst
  einem {T}heoreme {\"u}ber dichte {M}engen.
\newblock Krist. {Vid}. {Selsk}. {Skr}. {I}, 1920, {Nr}. 4, 36 {S}. (1922).,
  1922.

\bibitem{EMS_code_of_practice}
European~Mathematical Society.
\newblock {EMS} code of practice for mathematical publication, 19 March 2025.
\newblock URL: \url{https://euromathsoc.org/code-of-practice}.

\bibitem{Tao_blog_Sep_2024}
Terence Tao.
\newblock A pilot project in universal algebra to explore new ways to
  collaborate and use machine assistance?, September 2024.
\newblock URL: \url{https://terrytao.wordpress.com/2024/09/25/a}.

\bibitem{Tarski1938}
Alfred Tarski.
\newblock Ein {B}eitrag zur {A}xiomatik der {A}belschen {G}ruppen.
\newblock {\em Fundamenta Mathematicae}, 30(1):253--256, 1938.
\newblock URL: \url{http://eudml.org/doc/213009}.

\bibitem{taylor}
Walter Taylor.
\newblock The fine spectrum of a variety.
\newblock {\em Algebra Univers.}, 5:263--303, 1975.
\newblock \href {https://doi.org/10.1007/BF02485261}
  {\path{doi:10.1007/BF02485261}}.

\bibitem{the_coq_development_team_2024_14542673}
The Coq~Development Team.
\newblock The {C}oq proof assistant 8.20.0, December 2024.
\newblock \href {https://doi.org/10.5281/zenodo.14542673}
  {\path{doi:10.5281/zenodo.14542673}}.

\bibitem{DBLP:journals/pacmpl/WillseyNWFTP21}
Max Willsey, Chandrakana Nandi, Yisu~Remy Wang, Oliver Flatt, Zachary Tatlock,
  and Pavel Panchekha.
\newblock {egg}: Fast and extensible equality saturation.
\newblock {\em Proc. {ACM} Program. Lang.}, 5({POPL}):1--29, 2021.
\newblock \href {https://doi.org/10.1145/3434304} {\path{doi:10.1145/3434304}}.

\bibitem{Wolfram_2022}
Stephen Wolfram.
\newblock The physicalization of metamathematics and its implications for the
  foundations of mathematics, March 2022.
\newblock \href {https://arxiv.org/abs/2204.05123} {\path{arXiv:2204.05123}}.

\end{thebibliography}

\end{document}